\newcommand{\N}{\mathbb{N}}
\newcommand{\R}{\mathbb{R}}
\renewcommand{\P}{\mathbb{P}}
\newcommand{\E}{\mathbb{E}}
\theoremstyle{plain}
\newtheorem{theorem}{Theorem}[section]
\newtheorem{lemma}[theorem]{Lemma}
\newtheorem{proposition}[theorem]{Proposition}
\theoremstyle{definition}
\begin{document}

\title{Wages and Capital returns in a generalized Pólya urn}
\author{Thomas Gottfried$^*$ and Stefan Großkinsky\footnote{Stochastics and Its Applications, Institute of Mathematics, University of Augsburg}}
\date{\today}
\maketitle

\begin{abstract}

It is a widely observed phenomenon that wealth is distributed significantly more unequal than wages. In this paper we study this phenomenon using a new extension of Pólya's urn, modelling wealth growth through wages and capital returns. We focus in particular on the role of increasing return rates on capital, which have been identified as a main driver of inequality, and labor share, the second main parameter of our model. We fit the parameters from real-world data in Germany, so that simulation results reproduce the empirical wealth distribution and recent dynamics in Germany quite accurately, and are essentially independent from initial conditions. 
Our model is simple enough to allow for a detailed mathematical analysis and provides interesting predictions for future developments and on the importance of wages and capital returns for wealth aggregation. 
We also provide an extensive  discussion of the robustness of our results and the plausibility of the main assumptions used in our model.
\end{abstract}

\section{Introduction}

The evolution of inequality and its determinants is a much discussed issue in research and public debates, not least due to the enormous public impact of Thomas Piketty's work \cite{Piketty, Piketty2}. Most studies consent (e.g. \cite{forbes, quadrini, gabaix,  benhabib, chatterjee, bouchaud, wold, chakrabarti, druagulescu}) that in industrial countries the distribution of wealth reveals a two-tailed structure: Whereas for the majority of the population (95-99\%) the empirical distribution can be well-described by a light-tailed or log-normal distribution, a power-law distribution turns out to be more suitable for the richest within an economy. In fact, the Pareto-distribution was initially suggested by Vilfredo Pareto \cite{pareto} in 1897 to describe the distribution of income and wealth. Moreover, wealth is distributed significantly more unequal than income (see \cite{CorrelationPaper} or \cite{quadrini} and references therein). For Germany and the USA in 2021, Figure \ref{figure: WID} shows the distribution of net personal wealth per adult, which is defined as the total value of non-financial and financial assets (housing, land, deposits, bonds, equities, etc.) held by individuals, minus their debts. Figure \ref{figure: WID} confirms the two-tailed structure since the richest seem to follow a power-law distribution. Least square fit estimates a Pareto exponent of approximately 1.44 for Germany 2021 with similar values for 2011 and the USA. This exponent is similar for other countries as presented in \cite{vermeulen} and widely stable in time \cite{benhabib}, as already predicted by V. Pareto himself. For comparison, \cite{chatterjee} also finds a Pareto-tail in income distribution with exponent varying between 2.42 and 3.96 in Germany since 1990, underling the mentioned gap between income and wealth distribution.

\begin{figure}
  \centering
  \includegraphics[width=0.7\linewidth]{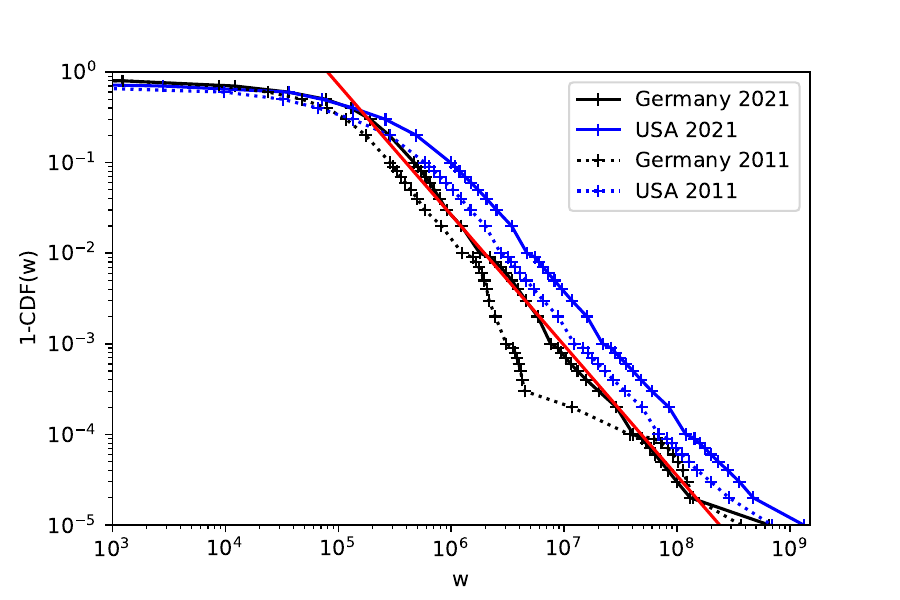}
  \caption{$1-CDF$ (Cumulative Distribution Function) of net personal wealth (purchasing power parity, equal split adults) in Germany and the USA in 2011 and 2021 in Euro resp. US-Dollar according to \cite{wid}. Least square fit (red line) estimates a Pareto exponent of 1.44 for Germany 2021. }
  \label{figure: WID}
\end{figure}

Simple models assuming independent return rates (like the Black-Scholes model) can only account for exponential-tailed wealth distributions, but numerous more complex models have been proposed to find the determinants of the power-law tail, some of which can be found in \cite{forbes, gabaix, benhabib, chatterjee, bouchaud, wold, kohlrausch, benhabib2, cardoso, liu, yakovenko, boghosiane, monaco} and references therein. The early model proposed in \cite{simon}, which was recently taken up in \cite{hu}, already contains the idea of combining independent and reinforced elements. An essential reason for the power-law tail has been found in so-called increasing returns, i.e. the return rates on capital depend on the amount of capital a person or household owns. More casually spoken: the richer you are, the faster your wealth grows. Detailed theoretical and empirical information on this phenomenon can be found e.g. in \cite{Arthur, forbes, fagereng, bach, ederer}. Diverse explanations for increasing returns can be conceived, like lower risk aversion of the rich due to higher risk-bearing potential. Empirical studies in \cite{fagereng} confirm this idea, but on the other hand increasing returns can even be found within similar asset groups. Hence, risk is not the exclusive driver of increasing returns and other factors like higher skills, political influence, informational advantages, decreasing costs of debt, tax-evasion or lower transaction costs are relevant, too. Moreover, some asset classes are barely available for ordinary people, like private equity fonds, art or other value increasing luxury goods.

An established model for growth processes subjected to increasing returns is the generalized Pólya urn model, as introduced by \cite{Hill} and motivated in this context by W. Brian Arthur \cite{Arthur, Arthur2}. A comprehensive survey of the properties of this model is provided by the authors of this paper in \cite{wir} and Appendix \ref{sec: r=0} summarizes the most important features. The main idea is that wealth is added step by step to the economy, where the probability of an agent to receive the next unit of wealth depends on the current wealth of the agent. \cite{Vallejos} fits the classical generalized Pólya urn to American data. Indeed, this model creates Pareto-tailed wealth distributions (see \cite{Oliveira, Zhu} and Appendix \ref{sec: r=0}), but a major drawback of this model is the occurrence of strong monopoly, i.e. from some point on only one (random) agent wins in all following steps (see Appendix \ref{sec: r=0}). The aim of this paper is to extend the generalized Pólya urn model, such that the empirical wealth distribution from Figure \ref{figure: WID} emerges as a stable long-term distribution under the dynamics of our new model.

First, we pick up the main idea of Pólyas urn model and distribute the wealth created in an economy step by step among a fixed number $A\in\{2, 3, \ldots\}$ of agents. But in this paper, we distinguish two different mechanisms of assigning an abstract unit of additional wealth to an agent. Assume that a company generates a unit of additional wealth, which corresponds to the gross yield. A certain share $r\in[0, 1]$ (the so-called "labor share") of the gross yield is payed to the employees via wages. The remaining $1-r$ units (the "profit share") are assigned to the shareholders, either by paying dividends or by increasing the fundamental value of the company. For simplicity, we assume wages to be fixed in time and independent of wealth in our model, i.e. this share of the abstract wealth unit is distributed among all agents proportionate to some fixed vector. The other part of the added wealth unit is distributed among the agents via capital returns and does hence depend on the current wealth of the agents. Capital returns will be modeled as a generalized Pólya urn as in \cite{wir}, which includes the phenomenon of increasing returns. Hence, the remaining $(1-r)$-share of the wealth unit is fully assigned to one randomly chosen agent. The share $r$ can be understood as an adjusted labour share in reality (see Subsection \ref{subsec: r}), which is a measure for the importance of capital for the accumulation of wealth. In more casual words: $r$ regulates in how far it is possible to become rich through hard work. As explained in Subsection \ref{subsec: r}, it is justifiable to assume that the labor share is constant in time. Since the distribution of wages is an exogenous parameter in our model, we will particularly focus on providing an explanation for the discrepancy between wage distribution and wealth distribution as a consequence of increasing returns.

In Section \ref{sec: model}, we will formally introduce the model and present a few rigorous results concerning the long-time behaviour using the method of stochastic approximation (see e.g. \cite{Hasminskii, Pemantle}). In Section \ref{sec: A=2}, we discuss the cases of $A=2$ and $A=3$ agents, in order to gain a visual understanding of the different regimes of the process.  In Section \ref{sec: simulation}, we fit the model parameters to available data and simulate the process for different initial configurations. We compare the simulated wealth distribution to the data from Figure \ref{figure: WID} and take a look at some other inequality indicators in order to reveal advantages and disadvantages of the proposed model. Moreover, we formulate predictions for the future based on our model. In Section \ref{sec: alpha}, we use our model to discuss if different investment skills provide an alternative explanation for the gap between wealth and wage distribution. Finally, in Section \ref{sec: discussion}, we bring together our numerical and theoretical findings and discuss the effect of the recent increase of interest rates on the future of inequality within our model.

\section{The model and some rigorous results}\label{sec: model}

In this section, we formally introduce the model. Let $A\in\{2, 3, \ldots\}$ be the number of agents in our economy and $r\in[0, 1]$ denotes the share of the added wealth, which is distributed proportionate to a deterministic vector $\gamma=(\gamma_1, \ldots, \gamma_A)\in\Delta_{A-1}\coloneqq\{(x_1,\ldots, x_A)\in[0, 1]^A\colon \sum_{i=1}^Ax_i=1\}$, representing the effect of wages on the accumulation of wealth. We fix a feedback function $F_i\colon(0, \infty)\to(0, \infty)$ for each agent $i\in[A]\coloneqq\{1,\ldots, A\}$. Now, we are prepared to define a homogeneous Markov process $X(n)=\left(X_1(n), \ldots, X_A(n)\right)$, $n\in\N_0$ in the state space $[1, \infty)^A$ with initial condition $X(0)\in[1, \infty)^A$. Denote by $N\coloneqq X_1(0)+\ldots+ X_A(0)$ the total wealth at time $n=0$. Then the transition probabilities are given by
\begin{equation}\label{eq: transpr}
    \P\left(X(n+1)-X(n)=v^{(i)}\,\big|\,X(n)\right)=p_i\left(n, \frac{X(n)}{N+n}\right)\coloneqq \frac{F_i(X_i(n))}{F_1(X_1(n))+\ldots+F_A(X_A(n))}
\end{equation}
for $i\in[A]$, where $v^{(i)}\coloneqq (1-r)e^{(i)}+r\gamma$ with $e^{(i)}\coloneqq(\delta_{i, j})_{j\in[A]}$. The corresponding process of wealth shares is defined as 
$$\chi(n)\coloneqq\left(\chi_1(n),\ldots,\chi_A(n)\right)\coloneqq \frac{1}{N+n}X(n)\in\Delta_{A-1},\quad n\in\N_0,.$$
 For $r=0$, this process coincides with the generalized Pólya urn studied in \cite{wir}, whereas for $r=1$ the process $\chi(n)$ is deterministic. Moreover, define the vector field
\begin{equation}\label{eq: G}
    G(n, x)\coloneqq\E\left[X(n+1)-X(n)\,\big|\,X(n)=\lfloor (N+n)x\rfloor\right]-x=(1-r)p(n, x)+r\gamma-x
\end{equation}
of centered expected increments for $x\in\Delta_{A-1}, n\in\N$ and $p(n, x)\coloneqq(p_1(n, x),\ldots, p_A(n, x))$. In particular, the field $G$ represents the expected increment of shares up to scaling, i.e.
\begin{equation}\label{eq: G2}
\E\left[\chi(n+1)-\chi(n)\,|\,\chi(n)=x\right]=\frac{G(n,x)}{N+n+1}\,.
\end{equation}

Note that the time-scale of our model is non-linear, i.e. one step of the process does not correspond to a fixed period of time in reality. When $\mu=\mu(t)$ is the annual growth rate of our economy (given as an exogenous parameter), then we could instead consider the time-changed process $t\mapsto X\left(\lfloor \left((1+\mu)^t-1\right)N\rfloor\right)$, where $t$ is time measured in years. This will be discussed in detail in Section \ref{sec: simulation}, in the following the explicit time scale is not important.

A rigorous approach to the long time behaviour of this process is provided by the method of stochastic approximation, see e.g. \cite{Pemantle, Hasminskii} and references therein. For that, we consider Doob's decomposition 
\begin{equation}\label{eq: Doob}
    \chi(n)=\chi(0)+H(n)+M(n)\,,
\end{equation}
where
\begin{equation*}
H(n)\coloneqq\sum_{k=0}^{n-1}\frac{G(k, \chi(k))}{N+k+1}\quad\text{and}\quad M(n)\coloneqq\sum_{k=0}^{n-1}\frac{1}{N+k+1}\xi(k),
\end{equation*}
with $\xi(n)\coloneqq X(n+1)-X(n)-G(n, \chi(n))-\chi(n)$. The process $H(n)$ is predictable with respect to the filtration $\left(\mathcal{F}_n\right)_n$ generated by the process $\chi(n)$. Moreover, the $\xi(n)$ are centered, bounded and uncorrelated since
$$\E\left[\xi_i(n)\xi_j(m)\right]=\E\left[\xi_i(n)\E\left[\xi_j(m)\,\big|\,\mathcal{F}_n\right]\right]=0\quad\text{for }m>n,\, i, j\in[A]\,.$$
Hence, $M(n)$ is a martingale, which is bounded in $L^2$ and consequently almost-surely convergent for $n\to\infty$.

For simplicity, we will mainly consider homogeneous feedback functions in the following, i.e. $F_i(k)=\alpha_ik^\beta$ for some $\alpha=(\alpha_1,\ldots, \alpha_A)\in(0, \infty)^A$ and $\beta\in\R$. This kind of feedback is particularly simple since the transition probabilities $p(n, x)$ do not depend on $n$ such that we can establish the notation
\begin{equation}\label{eq: gg}
p(x)\coloneqq p(n, x)\quad\text{and}\quad G(x)\coloneqq G(n, x)
\end{equation}
for the homogeneous case. Exploiting the convergence of the martingale $M(n)$, one can show that the long time limits of this process are given by the zeros $x$ of the vector field $G$, i.e. $G(x)=0$, which we will refer to as \textbf{fixed points} of the dynamics.

\begin{theorem}\label{thm: intax}
    For all $i\in[A]$ let $F_i(k)=\alpha_i k^\beta$ for $\alpha_i>0,\,\beta\in\R$, $r\in[0, 1]$. Then $\chi(n)\to\chi(\infty)$ converges almost surely to a stable fixed point of $G$ for $n\to\infty$.
\end{theorem}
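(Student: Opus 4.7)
My strategy is the standard ODE method of stochastic approximation \cite{Pemantle, Hasminskii}, for which the Doob decomposition \eqref{eq: Doob} already performs the main bookkeeping. Combining \eqref{eq: G2} with the definition of $\xi(n)$ rewrites the one-step recursion as
\begin{equation*}
\chi(n+1)-\chi(n)=\frac{1}{N+n+1}\bigl(G(\chi(n))+\xi(n)\bigr),
\end{equation*}
which is a Robbins--Monro scheme with gains $\gamma_n=(N+n+1)^{-1}$ satisfying $\sum_n\gamma_n=\infty$, $\sum_n\gamma_n^2<\infty$, and bounded, $\mathcal{F}_n$-adapted martingale increments $\xi(n)$. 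The excerpt already records that $M(n)$ is $L^2$-bounded and hence almost-surely convergent, which is the main noise estimate needed. Under homogeneous feedback $G$ from \eqref{eq: gg} is smooth on the relative interior of $\Delta_{A-1}$, and the state-space constraint $X_i(n)\geq 1$ keeps the transition probabilities well defined throughout.

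First I would invoke the ODE-method convergence result (e.g.\ Theorem~2.5 of \cite{Pemantle}) to conclude that the almost-sure limit set of $\chi(n)$ is a compact, connected, internally chain-recurrent set of the deterministic flow $\dot{x}=G(x)$ on $\Delta_{A-1}$. Next I would verify that the zeros of $G$ are isolated, by computing the Jacobian of $x\mapsto(1-r)p(x)+r\gamma-x$ from the explicit form of $p$ in \eqref{eq: transpr}; the case $r=0$ is the content of \cite{wir}, and for $r>0$ the affine perturbation $r\gamma-x$ additionally prevents accumulation on the faces of the simplex. It follows that every internally chain-recurrent set collapses to a single equilibrium, giving almost-sure convergence $\chi(n)\to\chi(\infty)$ to a (random) zero of $G$.

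The remaining and most delicate step is to exclude linearly unstable fixed points from the support of $\chi(\infty)$. The natural tool is a non-convergence theorem for stochastic approximation (Theorem~2.9 of \cite{Pemantle}, or the Brandi\`ere--Duflo theorem), whose hypothesis requires a uniform lower bound on the projection of the conditional covariance of $\xi(n)$ onto each unstable eigenspace of $DG(x^*)$ near every unstable equilibrium $x^*$. A direct computation using $\xi(n)=(1-r)\bigl(e^{(I)}-p(\chi(n))\bigr)$ gives
\begin{equation*}
\mathrm{Cov}\bigl(\xi(n)\,\big|\,\mathcal{F}_n\bigr)=(1-r)^2\bigl(\mathrm{diag}(p(\chi(n)))-p(\chi(n))p(\chi(n))^\top\bigr),
\end{equation*}
whose restriction to the tangent space $\{v\in\R^A:\sum_i v_i=0\}$ of $\Delta_{A-1}$ is positive definite at every interior point; this supplies the required non-degeneracy and rules out the unstable equilibria, leaving only stable ones as possible limits.

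The hard part is precisely this last step: one must project out the ``flat'' direction $\sum_iv_i=\mathrm{const.}$ that always lies in the kernel of the covariance, and separately handle the degenerate parameter regimes---the trivial case $r=1$, where $\chi(n)$ is purely deterministic, and fixed points on $\partial\Delta_{A-1}$, which for $r>0$ can be excluded by the pushing term $r\gamma$ but for $r=0$ require the strong-monopoly analysis from \cite{wir}. Combining the three steps delivers almost-sure convergence of $\chi(n)$ to a stable fixed point of $G$, as claimed.
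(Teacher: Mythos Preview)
Your overall framework via the ODE method is a legitimate alternative, and your covariance computation for the non-convergence step is essentially what both you and the paper invoke from \cite{Pemantle}. However, there is a genuine gap in the convergence step. The paper's proof hinges on the \emph{explicit Lyapunov function} \eqref{eq: lyapunov},
\[
L(x)=-(1-r)\log\Bigl(\sum_{i}\alpha_i x_i^\beta\Bigr)-r\sum_{i}\gamma_i\log x_i+\sum_{i} x_i,
\]
for which one checks $\langle\nabla L(x),G(x)\rangle=-\sum_i x_i\bigl(\partial_{x_i}L(x)\bigr)^2\leq 0$ with equality only on the zero set $S$ of $G$. This makes the flow $\dot x=G(x)$ gradient-like; the paper then shows $L(\chi(n))$ is eventually a supermartingale, hence convergent, and forces the limit into $S$.

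Your route replaces this by the chain-recurrent-set conclusion of the ODE method and then asserts that ``every internally chain-recurrent set collapses to a single equilibrium'' once the zeros of $G$ are isolated. That implication is not valid in general: isolated equilibria do not preclude periodic orbits or heteroclinic cycles, which are also internally chain-recurrent. Reducing the chain-recurrent set to equilibria requires precisely a strict Lyapunov function (this is the Bena\"im-type criterion underlying the results you cite from \cite{Pemantle}); without it your argument does not close. Moreover, the isolation claim itself is fragile across the full parameter range $\beta\in\R$, $r\in[0,1]$: for $r=0$, $\beta=1$ with equal $\alpha_i$ the field $G$ vanishes identically, and at the saddle-node parameter values described in Section~\ref{sec: A=2} the Jacobian $DG$ is singular, so ``computing the Jacobian'' does not suffice. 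The missing ingredient in your plan is exactly the function $L$; once you supply it, either your chain-recurrence route or the paper's direct supermartingale route goes through.
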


\begin{proof}
    The proof follows similar stochastic approximation arguments like in \cite{Arthur2, Benaim, Pemantle, Hasminskii}.  Define the set $S\subset\Delta_{A-1}$ of fixed points of $G$. Similar to \cite{Benaim}, a Lyapunov function for $G$ is given by
    \begin{equation}\label{eq: lyapunov}
        L(x)\coloneqq-(1-r)\log\left(\sum_{i=1}^A\alpha_i x_i^\beta\right)-r\sum_{i=1}^A\gamma_i\log x_i+\sum_{i=1}^A x_i\quad \text{for }x=(x_1,\ldots, x_A)\in\Delta_{A-1}^o
    \end{equation}
    as $\frac{d}{dx_i}L(x)=-\frac{1}{x_i}G_i(x)$ and consequently $\langle\nabla L(x),\,G(x)\rangle=-\sum_{i=1}^A x_i\left(\frac{d}{dx_i}L(x)\right)^2\le0$ with equality if and only if $x\in S$. Now, we observe that $L(\chi(n))$ eventually becomes a supermartingale:
    \begin{align*}
        \E\left[L(\chi(n+1)-L(\chi(n))\,\big|\,\chi(n)\right]&=\E\left[\nabla L(\chi(n))\left((\chi(n+1)-\chi(n)\right)+O(1/n)\,\big|\,\chi(n)\right]\\
        &=\nabla L(\chi(n))\E\left[\chi(n+1)-\chi(n)\,\big|\,\chi(n)\right]+O(1/n)\\
        &=\langle \nabla L(\chi(n)),\,G(\chi(n))\rangle+O(1/n)
    \end{align*}
     Since $L$ is bounded from below, we get almost sure convergence of $L(\chi(n))$ from the martingale convergence theorem. Take an open $\epsilon$-neighborhood $U_\epsilon\subset\Delta_{A-1}$ of $S$. Then there is $\delta(\epsilon)>0$ such that
    $$\E\left[L(\chi(n+1))-L(\chi(n))\,\big|\,\chi(n)=x\right]<-\delta(\epsilon)\quad\text{for all }x\in\Delta_{A-1}\setminus U_\epsilon\,,$$
    if $n$ is large enough. Hence, the limit point needs to be in any $U_\epsilon$ and consequently in $S$.
    
    The non-convergence to unstable fixed points is technically more demanding and follows basically from arguments like in \cite[Lemma 5.2.]{Arthur2}, \cite[Theorem 2.9]{Pemantle} or \cite[Chapter 5]{Hasminskii}. Note that the stable fixed points of $G$ are just the strict local minima of $L$. Maxima and saddle points of $L$, i.e. unstable fixed points of $G$, are not attained as limit points of $L(\chi(n))$ due to noise of order $\frac1n$.
\end{proof}

Like in \cite{Arthur2}, it is possible to extend Theorem \ref{thm: intax} to inhomogeneous feedback functions, provided that the field $G(k, x)$ converges for $k\to\infty$ sufficiently fast. For our applied purposes, these inhomogeneous feedback functions do not grant any enriching insights, such that we will neglect them.

Also note that the equation $G(x)=0$ does in general consist of $A-1$ independent equations for $A-1$ variables, since the $A$-th equation is redundant due to
\[
\sum_{i=1}^AG_i(x)=0\quad\mbox{and}\quad\sum_{i=1}^Ax_i=1\quad\mbox{since }x\in\Delta_{A-1} \ .
\]
Hence, heuristically, the zero-set of $G$ can be considered to be discrete. 

 An interesting observation in the situation of Theorem \ref{thm: intax} is that the limiting share of all agents $i\in[A]$ with positive wage $\gamma_i>0$ is bigger than $r\gamma_i$, i.e. $\chi(\infty)>r\gamma_i$ almost surely for all $i\in[A]$ since $p(n, x)>0$ for all $x\in\Delta_{A-1}^o$. The inequality is strict since all agents do not only receive their wage, but also capital returns on their savings.

 As mentioned before, our process is deterministic for $r=1$. The following Proposition states that the process reveals a deterministic long-time behaviour even for large enough $r<1$. In that case, agents with zero wage will have vanishing shares on the long run.

 \begin{proposition}\label{prop:det}
      For all $i\in[A]$ let $F_i(k)=\alpha_i k^\beta$ for $\alpha_i>0,\,\beta\ge1$. Then there is a critical labor share $r_c<1$, such that for all $r\ge r_c$ the limit $\chi(\infty)\coloneqq\lim_{n\to\infty}\chi(n)$ is deterministic. If $r\ge r_c$, then any agent $i\in[A]$ with $\gamma_i=0$ necessarily fulfills $\chi_i(\infty)=0$.
 \end{proposition}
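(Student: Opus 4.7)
The plan is to reduce the statement to uniqueness of the zero of $G$ on $\Delta_{A-1}$, since Theorem \ref{thm: intax} guarantees that $\chi(\infty)$ is almost surely a (stable) fixed point of $G$, so a unique fixed point forces the limit to be deterministic. Writing $G(x)=0$ as the fixed-point equation $x=T(x)\coloneqq(1-r)p(x)+r\gamma$, one easily checks that $T$ maps $\Delta_{A-1}$ into itself. Since $\Delta_{A-1}$ is compact and hence complete, my tool of choice is Banach's contraction mapping theorem applied to $T$.

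The core estimate is that $p$ is Lipschitz on $\Delta_{A-1}$ with some constant $L_p=L_p(\alpha,\beta,A)$ independent of $r$. For $\beta\ge 1$ each numerator $x\mapsto\alpha_i x_i^\beta$ is $C^1$ on $[0,1]$ with uniformly bounded derivative, while the denominator $\sum_j\alpha_j x_j^\beta$ is bounded away from zero on $\Delta_{A-1}$ because at least one share satisfies $x_j\ge 1/A$; the quotient rule then yields the desired Lipschitz bound. Consequently $T$ is Lipschitz with constant $(1-r)L_p$, which is strictly less than $1$ as soon as $r$ exceeds $1-1/L_p$. Fixing any $r_c\in(1-1/L_p,\,1)$, Banach's theorem produces a unique fixed point $x^*\in\Delta_{A-1}$ for every $r\ge r_c$, and combining this with Theorem \ref{thm: intax} gives $\chi(\infty)=x^*$ almost surely.

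For the second claim I would set $I_0\coloneqq\{i\in[A]:\gamma_i=0\}$ and consider the face $F\coloneqq\{x\in\Delta_{A-1}:x_i=0\text{ for all }i\in I_0\}$, which is nonempty (it contains $\gamma$) and closed. For $i\in I_0$ and $x\in F$ the numerator $\alpha_i x_i^\beta$ vanishes, so $T_i(x)=(1-r)p_i(x)+0=0$; hence $T$ leaves $F$ invariant. Applying Banach to $T|_F$ on the complete subspace $F$ yields a fixed point of $T$ inside $F$, which by the global uniqueness established above must coincide with $x^*$. This forces $\chi_i(\infty)=x^*_i=0$ for every $i\in I_0$. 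I expect the main obstacle to be the Lipschitz estimate for $p$: this is exactly where the hypothesis $\beta\ge 1$ enters, since for $\beta<1$ the map $x\mapsto x^\beta$ is only Hölder continuous near the origin and the contraction approach breaks down on the boundary of the simplex.
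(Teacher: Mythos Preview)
Your argument is correct and follows essentially the same route as the paper: both proofs exploit that for $\beta\ge 1$ the map $p$ (equivalently $G_0=p-\mathrm{id}$) is Lipschitz on the closed simplex, so for $r$ close enough to $1$ the fixed-point equation has a unique solution, and the $\gamma_i=0$ claim follows by restricting to the corresponding face. The only cosmetic difference is that you package the uniqueness as Banach's contraction theorem for $T(x)=(1-r)p(x)+r\gamma$, whereas the paper compares two putative fixed points directly via the identity $G_0(x_r)-G_0(y_r)=\frac{r}{1-r}(x_r-y_r)$; this yields slightly different but equivalent thresholds ($r_c=1-1/L_p$ versus $r_c=L/(1+L)$).
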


 \begin{proof}
     First, Theorem \ref{thm: intax} implies the existence of a fixed point of $G$.  For $r\in(0, 1)$ take $x_r, y_r\in\Delta_{A-1}^o$ satisfying $G(x_r)=G(y_r)=0$. Define $G_0(x)\coloneqq p(x)-x$ for $x\in\Delta_{A-1}$ and $p(x)\coloneqq p(k, x)$. Note that the field $G_0$ does not depend on $r$. Then:
     \begin{align*}
         G_0(x_r)-G_0(y_r)&=\frac{1}{1-r}\left((1-r)p(x_r)+r\gamma-x_r-(1-r)p(y_r)-r\gamma+y_r+rx_r-ry_r\right)\\
         &=\frac{1}{1-r}\left(G(x_r)-G(y_r)+rx_r-ry_r\right)=\frac{r}{1-r}(x_r-y_r)
     \end{align*}
     As $\beta\ge1$, $G_0$ is Lipschitz-continuous with a Lipschitz-constant $L=L(\alpha, \beta)<\infty$. Hence, we have $x_r=y_r$, when $\frac{r}{1-r}>L$, and as a consequence $G$ has only one unique fixed point for $r\ge r_c :=\frac{L}{1+L} \in (0,1)$.

     Now, let $r\ge r_c$ and assume (w.l.o.g.) $\gamma_1=0$.  If $y_0\in\Delta_{A-2}$ is a zero of the restricted field $\tilde G(y)=G(0, y),\,y\in\Delta_{A-2}$, which corresponds to a system with $A-1$ agents, then $(0, y_0)$ is a zero of $G$. Uniqueness of the fixed point for $r\ge r_c$ and the existence of such $y_0$ imply $\chi_1(\infty)=0$.
 \end{proof}

This proof also implies that there is no further (unstable) fixed point of $G$ for $r\ge\frac{L}{1+L}$. Note that Proposition \ref{prop:det} may hold with $r_c <\frac{L}{L+1}$, our argument provides only an upper bound for $r_c$.

For $r=0$, our process equals the generalised Pólya urn studied in \cite{wir} and references therein. If in addition $\beta>1$, then the process reveals strong monopoly with probability one, i.e. at some point one agent wins all following steps. Of course, this cannot happen for $r>0$ since all agents get at least their wage and receive capital returns on their wage on top. Nevertheless, we suggest to call agents  \textbf{winner},  if their wealth share exceeds their income share, i.e. $\chi_i(\infty)>\gamma_i$. Otherwise we call them loser. In that sense, we can still identify a unique random winner for small enough $r>0$.

\begin{proposition}
    Let $F_i(k)=k^\beta$ for all $i\in[A]$ with $\beta>1$. Then there is $r_c'>0$ such that for all $r<r_c'$
    $$\P\left(\exists !\,i\in[A]\colon\chi_i(\infty)>\gamma_i\right)=1$$
    and for all $i\in[A]$
    $$\P\left(\chi_i(\infty)>\gamma_i\right)>0\,.$$
\end{proposition}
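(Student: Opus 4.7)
The plan is to locate, for $r$ small, stable fixed points $x^{(i)}(r)$ of $G$ near each vertex $e^{(i)}$, verify the winner/loser structure at these points, and combine Theorem \ref{thm: intax} with a consecutive-win argument to establish the two claims. Throughout I assume $\gamma_i < 1$ for all $i$; this is necessary for the second claim, since $\chi_i(\infty) \leq 1$.

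To find $x^{(i)}(r)$, I would rewrite the fixed-point equation $G(x) = 0$ as $x = (1-r)\,p(x) + r\gamma$ with $p_j(x) = x_j^\beta/\sum_k x_k^\beta$, and seek solutions with $x^{(i)}_j = O(r)$ for $j\neq i$. Since $\beta>1$, $\sum_k x_k^\beta = 1 + O(r^\beta)$, so the $A-1$ equations
\[x_j = (1-r)\,x_j^\beta\,(1 + O(r^\beta)) + r\gamma_j, \qquad j\neq i,\]
define a contraction on a small ball around $0$ for $r$ small, and Banach's fixed point theorem yields a unique solution
\[x^{(i)}_j(r) = r\gamma_j + O(r^\beta),\ j\neq i,\qquad x^{(i)}_i(r) = 1 - r(1-\gamma_i) + O(r^\beta).\]
A direct computation shows the Jacobian of $G$ at $x^{(i)}(r)$, restricted to the tangent space of $\Delta_{A-1}$, equals $-I + O(r^{\beta-1})$, so $x^{(i)}(r)$ is a hyperbolic stable fixed point.

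The next step is to show these are the only stable fixed points and that each has the winner/loser structure. For $r=0$ the Lyapunov function \eqref{eq: lyapunov} reduces to $L_0(x) = -\log\sum_k x_k^\beta + 1$, whose only strict local minima on $\Delta_{A-1}$ are the vertices $e^{(i)}$, and whose remaining critical points (uniform on some face) are non-degenerate saddles for $\beta>1$. For $r>0$ small, $L$ is a smooth perturbation of $L_0$ on compact subsets of $\Delta_{A-1}^o$ and blows up on the portion of $\partial\Delta_{A-1}$ where $\gamma_k>0$. A compactness argument (any sequence of critical points of $L_{r_n}$ with $r_n\to0$ must accumulate at a critical point of $L_0$), together with the implicit-function theorem near each interior saddle and the Banach construction of Step 1 near each corner minimum, shows that the strict local minima of $L$ for small $r$ are exactly the $x^{(i)}(r)$. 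Theorem \ref{thm: intax} then forces $\chi(\infty) \in \{x^{(1)}(r), \ldots, x^{(A)}(r)\}$ almost surely, and at $x^{(i)}(r)$ the expansion gives $x^{(i)}_i > \gamma_i$ (using $\gamma_i<1$) while $x^{(i)}_j \leq \gamma_j$ for $j\neq i$ (strict if $\gamma_j>0$, equality if $\gamma_j=0$), establishing the $\exists!$ statement.

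For positive probability, fix $i$ and let $E^{(i)}_{n_0}$ be the event that agent $i$ wins the first $n_0$ steps. Conditional on $E^{(i)}_{n_0}$, $\chi(n_0)$ is deterministic and converges to $v^{(i)} = (1-r)e^{(i)} + r\gamma$ as $n_0\to\infty$, hence lies within $O(r^\beta)$ of $x^{(i)}(r)$; along this deterministic arc $p_i(\chi(k)) \geq c_i > 0$ by continuity on a compact path in $\Delta_{A-1}^o$, so $\P(E^{(i)}_{n_0}) \geq c_i^{n_0} > 0$. Choosing $n_0$ large enough that $\chi(n_0) \in B_\delta(x^{(i)})$ for some $\delta$ with $B_\delta(x^{(i)})$ contained in the basin of attraction of $x^{(i)}$, the trap lemma for stochastic approximation at hyperbolic stable equilibria (cf.\ \cite[Thm.\ 2.9]{Pemantle}, \cite[Ch.\ 5]{Hasminskii}) gives $\P(\chi(n)\to x^{(i)}\mid \chi(n_0)\in B_\delta(x^{(i)}), \mathcal{F}_{n_0}) \geq 1/2$. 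Combining, $\P(\chi(\infty) = x^{(i)}) > 0$, which implies $\P(\chi_i(\infty) > \gamma_i) > 0$. The main obstacle is the second step: the $r=0$ minima sit on the boundary of $\Delta_{A-1}$ while the $r>0$ Lyapunov function blows up there, so one cannot invoke Morse-theoretic persistence off the shelf and must handle the corners through the explicit Banach construction.
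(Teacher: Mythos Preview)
Your proof is correct and follows the same perturbative strategy as the paper---locate stable fixed points near each vertex for small $r$, verify the winner/loser structure there, and rule out any other stable fixed points---but the technical tools differ in two places. For the construction of $x^{(i)}(r)$ you use a Banach contraction and obtain the explicit expansion $x^{(i)}_j(r)=r\gamma_j+O(r^\beta)$, from which the winner/loser inequalities drop out immediately; the paper instead invokes the implicit function theorem directly from $DG_0(e^{(i)})=-I$ and reads off the loser inequality from the $\epsilon$-neighbourhood. For the uniqueness of stable fixed points, the paper sidesteps the boundary difficulty you flag by abandoning the Lyapunov function and instead using the explicit catalogue of $r{=}0$ fixed points $x^{(S)}=\bigl(\tfrac{1}{\#S}\mathds 1_S(k)\bigr)_{k\in[A]}$: for $\#S>1$ the Jacobian $DG_0(x^{(S)})$ has a positive eigenvalue, and continuity of eigenvalues in $(r,x)$ transfers instability to any nearby zero of $G$. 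This is cleaner than your compactness argument on $L$, precisely because it never has to compare interior critical points of $L_r$ with boundary minima of $L_0$. Finally, your consecutive-win plus trap-lemma argument for $\P(\chi_i(\infty)>\gamma_i)>0$ is a genuine addition---the paper asserts this conclusion but gives no argument for it---and your observation that $\gamma_i<1$ is needed for the second claim is also absent from the paper. (A minor quibble: $\sum_k x_k^\beta=1+O(r)$ rather than $1+O(r^\beta)$, but this does not affect your conclusion since the resulting error in the $x_j$-equation is $O(r^{\beta+1})$.)
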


\begin{proof}
    Let $i\in[A]$. Denote by $DG(x)\coloneqq \left(\frac{\partial G(x)}{\partial x_i\partial x_j}\right)_{i, j}$ the differential matrix of $G$ in $x\in\Delta_{A-1}$. For $r=0$, a simple computation shows $\nabla G_j(e^{(i)})=(-\delta_{l, j})_{l=1,\ldots,A}$ for all $j\in[A]$. Hence, $DG(e^{(i)})$ is negative definite and invertable. Then we get from the implicit function theorem that there is $\epsilon>0,\,r_c'>0$ such that for all $r<r_c'$ there is exactly one zero of $G$ in the $\epsilon$-neighborhood of $e^{(i)}$. Denote this fixed point by $x^{(i)}(r)$. Obviously, for all agents $j\ne i$ with $\gamma_j=0$ we must have $x_j^{(i)}(r)=0$ due to the uniqueness of the fixed point. Hence, assume without loss of generality that $\gamma_j>0$ for all $j\ne i$ and suppose $\epsilon\le\min\{\gamma_j\colon j\ne i\}$. Consequently, $x_j^{(i)}(r)\le\gamma_j$ for all $j\ne i$ and $r<r_c'$. 

    It remains to show that for $r<r_c'$ there are no other stable fixed points of $G$. Therefor, consider the "$r{=}0$"-field $G_0(x)=p(x)-x$. We know from \cite{wir} that all zeros of $G_0$ have the form $x^{(S)}=\left(\frac{1}{\#S}\mathds{1}_{S} (i)\right)_{i\in[A]}$ for a non-empty subset $S\subset[A]$. Since $G$ is a continuous perturbation of $G_0$, we know that for small enough $r$ all zeros of $G$ are located in an $\epsilon$-neighborhood of these points $x^{(S)}$. Moreover, $x^{(S)}$ is unstable for $\#S>1$ such that $DG_0(x^{(S)})$ has at least one positive eigenvalue. Since eigenvalues of $DG(x)$ do continuously depend on $x$ and $r$, there is still at least one positive eigenvalue of $DG(x)$ for any zero $x$ of $G$ that is located in an $\epsilon$-neighborhood of $x^{(S)}$ with $r$ small enough. Hence, all stable fixed points are close to an $x^{(S)}$ with $\#S=1$ if $r$ is small. The stability of these fixed points can be shown similarly using negative definiteness of $DG_0(e^{(i)})$.
\end{proof}

An interesting implication of the construction of our stable fixed points is the following: For $r<r_c'$, there is exactly one fixed point close to each corner of $\Delta_{A-1}$. Hence, $\chi(\infty)$ is fully determined by picking the winner, i.e. there is no random hierarchy between the losers.  In the next section, we will see that the fixed points disappear one by another, when $r$ is increased, until finally only one fixed point remains for $r\ge r_c$.

Since it will be of particular interest in Section \ref{sec: alpha}, let us now discuss the linear case $F_i(k)=\alpha_i k$ with skill parameter $\alpha_i>0$, which corresponds to wealth independent return rates. Obviously, $\gamma$ is the unique stable fixed point of $G$ when $\alpha_1=\ldots=\alpha_A$ and $r>0$ and hence $\chi(n)$ converges to $\gamma$ almost surely. In case of the standard Pólya urn (i.e. $\alpha_i=1$ and $r=0$), $\chi(n)$ converges almost surely towards a random point. For unequal $\alpha_i$ and $r=0$, the process reveals a deterministic weak monopoly, i.e. $\chi(n)$ converges to $e^{(i)}$, where $i$ is the agent with the largest $\alpha$ (see Appendix \ref{sec: r=0}). Indeed, there is a deterministic limit for all choices of $r>0$ and $\alpha_i>0$. This does also hold in the sublinear case, where $\chi(\infty)$ is deterministic even for $r=0$.

\begin{proposition}\label{prop: alpha}
    Let $r>0$ and $F_i(k)=\alpha_i k^\beta$ for $\alpha_i>0$ and $\beta\le1$. Then $\chi(n)$ converges almost surely to a deterministic point $\chi (\infty )$ for $n\to\infty$, i.e. $r_c=0$.
\end{proposition}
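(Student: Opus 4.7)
By Theorem \ref{thm: intax} the process $\chi(n)$ converges almost surely to a stable fixed point of $G$, so it is enough to show that under the hypotheses $\beta\le 1$ and $r>0$ the field $G$ admits a \emph{unique} stable fixed point in $\Delta_{A-1}$. I would argue this via the Lyapunov function $L$ from \eqref{eq: lyapunov} (using the corrected prefactor $-(1-r)/\beta$ in front of $\log f$ for $\beta\neq 0$), recalling from the proof of Theorem \ref{thm: intax} that stable fixed points of $G$ are precisely the strict local minima of $L$ on the simplex.

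The central step is to prove strict convexity of $L$ on $\Delta_{A-1}^o$ in the tangent directions $\{d:\sum_i d_i=0\}$. Setting $f(x)=\sum_i\alpha_i x_i^\beta$ and computing the Hessian, the term coming from $-(1-r)/\beta\,\log f$ splits into a diagonal piece proportional to $-(1-r)(\beta-1)\alpha_i x_i^{\beta-2}/f$, which is strictly positive whenever $\beta<1$, and a rank-one outer product of $\nabla\log f$, which is PSD for $\beta>0$. For $\beta\in(0,1)$ the diagonal piece alone already makes the Hessian positive definite, yielding strict convexity. For $\beta=1$ the diagonal piece vanishes and one has to rely on the additional diagonal contribution $r\,\mathrm{diag}(\gamma_i/x_i^2)$ coming from $-r\sum_i\gamma_i\log x_i$; together with the rank-one PSD piece this is positive definite on the tangent plane in the generic case. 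The remaining ranges $\beta\le 0$ reduce quickly: for $\beta=0$ the probability $p$ is constant in $x$ and $G$ is affine with a unique zero, while for $\beta<0$ the feedback is equalizing and, after simplifying $\log f$ on the simplex, $L$ is again seen to be strictly convex.

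Next I would exclude boundary fixed points. For $\beta<1$ any point with $x_i=0$ is repelling in the $i$-th coordinate, since a perturbation $x_i=\epsilon>0$ gives $G_i(x)\sim(1-r)\alpha_i\epsilon^\beta/f+r\gamma_i-\epsilon$, which is strictly positive for small $\epsilon$ because $\epsilon^\beta\gg\epsilon$; hence no stable fixed point lies on $\partial\Delta_{A-1}$. For $\beta=1$ the analogous statement follows from the standard linear P\'olya analysis used in Proposition \ref{prop:det}. Strict convexity on the interior together with interiority of the minimiser forces $L$ to have a unique global minimiser, which is then the unique stable fixed point of $G$; Theorem \ref{thm: intax} concludes that $\chi(\infty)$ is deterministic and that $r_c=0$.

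The main obstacle I anticipate is the $\beta=1$ case with degenerate wages: if several wage-less agents share the same skill parameter $\alpha_i$, then $L$ has a genuine null direction corresponding to the permutation symmetry of those agents and strict convexity on the tangent plane fails. I would break this degeneracy either by reducing to the sub-simplex of agents with $\gamma_i>0$, collapsing each symmetric block of wage-less agents into a single effective coordinate where the inductive statement applies, or by a direct monotonicity argument on the scalar fixed-point equation for $S=\sum_j\alpha_j x_j$, which in the linear case can be shown to admit a unique solution on the range $S>(1-r)\max_i\alpha_i$.
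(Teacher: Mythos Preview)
Your approach is essentially the paper's: reduce to uniqueness of the stable fixed point by proving strict convexity of $L$ via the same Hessian decomposition into a rank-one PSD piece plus a non-negative diagonal piece (and your observation about the missing $1/\beta$ prefactor in \eqref{eq: lyapunov} is correct). For the degenerate case $\beta=1$ with some $\gamma_i=0$, the paper's route is slightly cleaner than your proposed block-collapsing or scalar fixed-point argument: Lemma~\ref{lemma: beta=1, gamma=0} groups all wage-less agents together and, by linearity, reduces to a two-agent system where one checks directly that the wage-less group's share vanishes, so one may simply assume $\gamma_i>0$ for all $i$ and the diagonal term $r\gamma_i/x_i^2$ alone yields positive definiteness.
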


\begin{proof}
    Using the argument from the proof of Theorem \ref{thm: intax}, we have to show that the Lyapunov function $L$ defined in (\ref{eq: lyapunov}) has a unique minimum. For that, we prove that $L$ is strictly convex. Direct calculation yields that the Hessian of $L$ is of the form
    $$\left(\frac{\partial L(x)}{\partial x_i\partial x_j}\right)_{i, j}=c(x)\cdot v\cdot v^T+A(x),\quad x=(x_1,\ldots, x_A)\in\Delta_{A-1}^o\,,$$
    where $c(x)\coloneqq(1-r)\beta^2\left(\sum_{i=1}^A\alpha_i x_i^\beta\right)^{-2}\ge0$ and $v=\left(\alpha_i x_i^{\beta-1}\right)_{i\in[A]}\in (0, \infty)^A$ and $A(x)=\left(A_{i, j}(x)\right)_{i, j\in[A]}$ is a diagonal matrix with
    $$A_{i, i}(x)=r\gamma_i x_i^{-2}+(1-r)\beta(1-\beta)\alpha_i^2x_i^{\beta-2}\left(\sum_{j=1}^A\alpha_j x_j^\beta\right)^{-1}\ge 0$$
    Assume $r<1$ as $r=1$ is trivial. Since $v\cdot v^T$ is non-negative definite, the Hessian of $L$ is positive definite if either $\gamma_i>0$ for all $i\in[A]$ or $\beta<1$. But if $\beta=1$, we can w.l.o.g. assume $\gamma_i>0$ due to Lemma \ref{lemma: beta=1, gamma=0}. Hence, $L$ is strictly convex.
\end{proof}

\begin{lemma}\label{lemma: beta=1, gamma=0}
    Let $r>0$ and $F_i(k)=\alpha_i k$ for $\alpha_i>0$. Then $\chi_i(\infty)=0$ for any agent $i\in[A]$ with $\gamma_i=0$.
\end{lemma}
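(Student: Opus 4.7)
The plan is to combine the almost-sure convergence from Theorem \ref{thm: intax} with an analysis of the Lyapunov function $L$ specialized to $\beta = 1$. Writing $I := \{i \in [A] : \gamma_i = 0\}$ and $J := [A]\setminus I$, note that for $\beta = 1$ the term $\gamma_i \log x_i$ vanishes for every $i \in I$, so
\[
L(x) = -(1-r)\log\Bigl(\sum_{k=1}^A \alpha_k x_k\Bigr) - r\sum_{j \in J}\gamma_j \log x_j + 1
\]
is in fact smooth up to the boundary face $F := \{x \in \Delta_{A-1} : x_i = 0 \text{ for } i \in I\}$, and not just on the open simplex.

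First, I would show that $L$ restricted to $F$ is strictly convex. On $F$ the Hessian from the proof of Proposition \ref{prop: alpha} takes the form $c(x)\, vv^T + A(x)$ with $A_{jj}(x) = r\gamma_j / x_j^2 > 0$ for $j \in J$ (and only the rows/columns indexed by $J$ active). Restricted to the simplex tangent space $\{u : \sum_{j \in J} u_j = 0\}$ of $F$, positivity of the $A_{jj}$ forces strict positive definiteness, so $L|_F$ has a unique minimum $x^*_F$ characterized by the stationarity equations $G_j(x^*_F) = 0$ for $j \in J$.

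Next, I would argue that $x^*_F$ is also the unique minimum of $L$ on the full simplex $\Delta_{A-1}$. Since for $i \in I$ one has $\partial_{x_i} L(x)\big|_{x \in F} = 1 - (1-r)\alpha_i / \sum_k \alpha_k x_k$, the decisive condition is the inequality $(1-r)\alpha_i \le \sum_k \alpha_k x^*_{F,k}$ for every $i \in I$. Combined with the Lagrangian first-order conditions on $F$, this would make $x^*_F$ the unique global minimum on $\Delta_{A-1}$, and then Theorem \ref{thm: intax}, together with its observation that stable fixed points of $G$ are precisely the strict local minima of $L$, would yield $\chi(\infty) = x^*_F$ almost surely and in particular $\chi_i(\infty) = 0$ for every $i \in I$.

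The main obstacle is exactly the inequality $(1-r)\alpha_i \le \sum_k \alpha_k x^*_{F,k}$ for every $i \in I$: it has to be extracted from the structure of the fixed-point equations on $F$, which involve the positivity of the wages $(\gamma_j)_{j \in J}$ and the linearity of the feedback in a quite delicate way. If it resists a direct algebraic derivation, the natural fallback is a spectral stability argument at the boundary fixed point $x^*_F$: compute $DG(x^*_F)$, observe that the rows indexed by $I$ reduce (because $x^*_{F,i} = 0$) to the diagonal entries $\partial_{x_i} G_i(x^*_F) = (1-r)\alpha_i / \sum_k \alpha_k x^*_{F,k} - 1$, and show that all these eigenvalues are non-positive, so that $x^*_F$ is the unique stable equilibrium which $\chi(n)$ can approach.
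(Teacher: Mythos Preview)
Your approach differs from the paper's: the paper groups the agents into two classes (positive wage versus zero wage), claims the resulting pair process is again a two-agent linear urn, and then solves the $A=2$ fixed-point equation; you instead work directly with the Lyapunov function on the full simplex. Your identification of the crucial inequality $(1-r)\alpha_i \le \sum_{k}\alpha_k x^*_{F,k}$ is exactly the right crux---this is precisely what decides whether $x^*_F$ is a local minimum of $L$ in the directions transverse to $F$, and it is equivalent to non-positivity of the transverse eigenvalue $\partial_{x_i}G_i(x^*_F)=(1-r)\alpha_i\big/\sum_k\alpha_k x^*_{F,k}-1$ in your spectral fallback.

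The problem is that this inequality \emph{genuinely fails} for suitable parameters, so neither your main route nor the fallback can be completed. Take $A=2$, $\gamma=(1,0)$, $\alpha_1=1$, $\alpha_2=10$, $r=\tfrac12$. Then $x^*_F=(1,0)$ and $\sum_k\alpha_k x^*_{F,k}=1$, while $(1-r)\alpha_2=5>1$; the transverse eigenvalue equals $4>0$ and the boundary point is unstable. Solving $G=0$ directly, the fixed-point equation factors as $(x_1-1)\bigl((\alpha_1-\alpha_2)x_1+r\alpha_2\bigr)=0$, giving an interior root $x_1=r\alpha_2/(\alpha_2-\alpha_1)=\tfrac59$, which one checks is the unique \emph{stable} fixed point. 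Hence $\chi_2(\infty)=\tfrac49>0$ and the lemma as stated is false. The paper's argument has the same defect: the grouping only yields a two-agent linear urn when the $\alpha_i$ are constant within each group, and even then the claimed uniqueness of the root $x_1=1$ breaks down once $\alpha_1<(1-r)\alpha_2$. The statement (and both proofs) are correct under the additional hypothesis that all $\alpha_i$ coincide.
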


\begin{proof}
    Due to the linearity, if suffices to consider a system with $A=2$ and $\gamma_1=1$, since this process is equivalent to the group-process $\left(\sum_{i\in[A]\atop \gamma_i>0}\chi_i(n),\,\sum_{i\in[A]\atop \gamma_i=0}\chi_i(n)\right)_n$. But then a simple calculation shows that
    $$G(x)=0\quad\Leftrightarrow\quad (1-r)\frac{\alpha_1 x_1}{\alpha_1 x_1+\alpha_2(1-x_1)}+r=x_1$$
    has only the solution $x_1=1$.
\end{proof}

\begin{figure}
  \centering
  \subfloat{\includegraphics[width=0.5\linewidth]{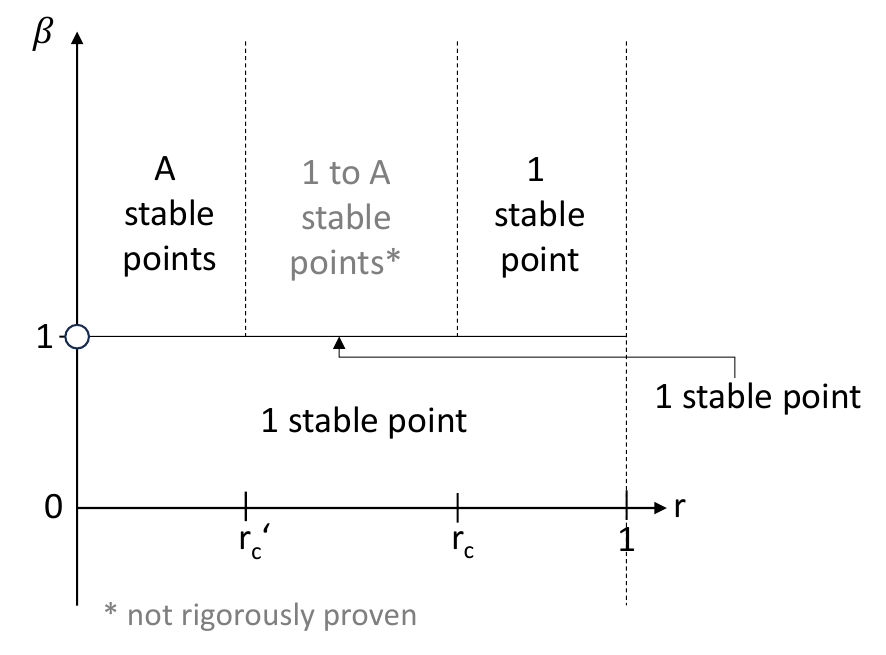}}%
  \caption{Qualitative illustration of the number of stable fixed points of $G$ for homogeneous feedback and different $r$ and $\beta$. $\circ$ marks the classical Pólya urn, which exhibits either deterministic (weak) monopoly or a Dirichlet distributed limit. }
  \label{figure: rb_image}
\end{figure}

Let us finally sum up the homogeneous case $F_i(k)=\alpha_i k^\beta$: The limit $\lim_{n\to\infty}\chi(n)$ does exist and has a discrete distribution, but both deterministic and random limits are possible depending on $r$ and $\beta$. Remarkably, the set of possible limit points does not depend on the initial configuration $X(0)$, but the probability that a specific limit point is attained might depend on $X(0)$. This means for $r<r_c$ that agents with high wage and low initial wealth may never fully catch up their initial disadvantage, whereas they will do so for $r\ge r_c$. For $\beta>1$ and $r<r_c$, the first steps of the process decide which limit point is attained, because the process behaves almost deterministic for large market sizes according to the law of large numbers presented in Appendix \ref{sec: llnTax}. For small enough $r$, we can still identify unique winners and any agent can be the winner. For constant or decreasing return rates $\beta\le1$, $\lim_{n\to\infty}\chi(n)$ is deterministic for all $r>0$ and hence $r_c=0$, whereas $r_c>0$ in the increasing return case $\beta>1$. Figure \ref{figure: rb_image} finally illustrates these findings.

 \section{The two and three agent case}\label{sec: A=2}

In order to gain a visual understanding of the long-time behaviour of this process, we will discuss the homogeneous case with $A=2$ in detail. So, let $F_1(k)=k^\beta$, $F_2(k)=\alpha k^\beta$ for $\beta\in\R$ and $\alpha\ge1$. For simplicity, we establish the notation $G(x)=G_1(x, 1-x)$, $\gamma=\gamma_1$ and $p(x)=p_1(x, 1-x)$ for $x\in[0, 1]$, i.e. $x$ represents the share of agent $1$ and agent 2 has share $1-x$. Then
\begin{equation*}
    G(x)=(1-r)(p(x)-x)+r(\gamma -x)=0\quad\Leftrightarrow\quad 
    p(x)-x=\frac{r}{1-r}(x-\gamma)
\end{equation*}
and the stable fixed points of $G$ are the downcrossings of the "$r=0$"-field
\begin{equation}\label{eq: fieldG0}
    x\mapsto G_0(\alpha, \beta; x)\coloneqq p(x)-x
\end{equation}
with the line 
\begin{equation}\label{eq: lineg}
    x\mapsto g(r, \gamma; x)\coloneqq\frac{r}{1-r}\left(x-\gamma\right)\,.
\end{equation}
These downcrossings constitute the possible long-time limits of the process $\chi(n)$ according to Theorem \ref{thm: intax}. The upcrossings are unstable fixed points and are not attained as long-time limits.

\begin{figure}
  \centering
  \subfloat{\includegraphics[width=0.5\linewidth]{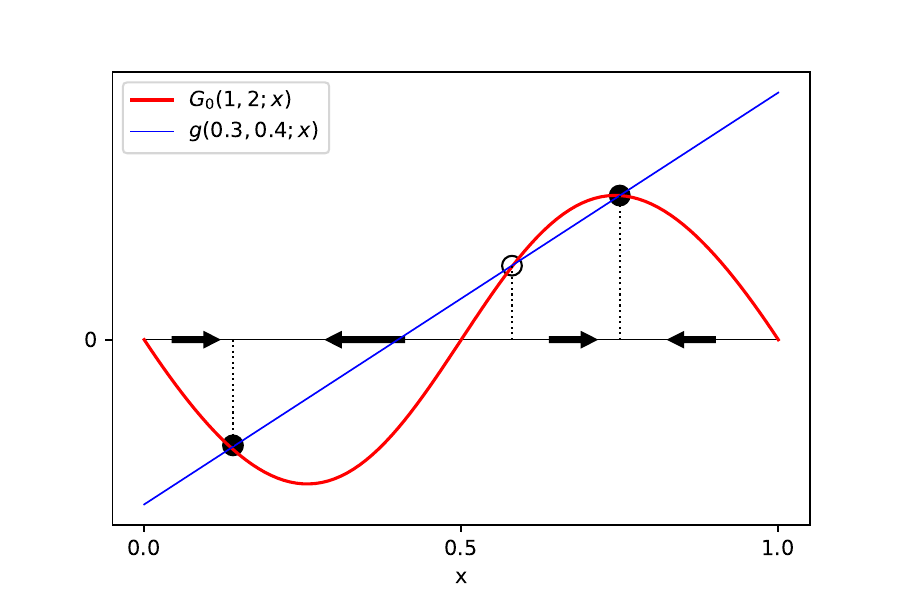}}%
  \subfloat{\includegraphics[width=0.5\linewidth]{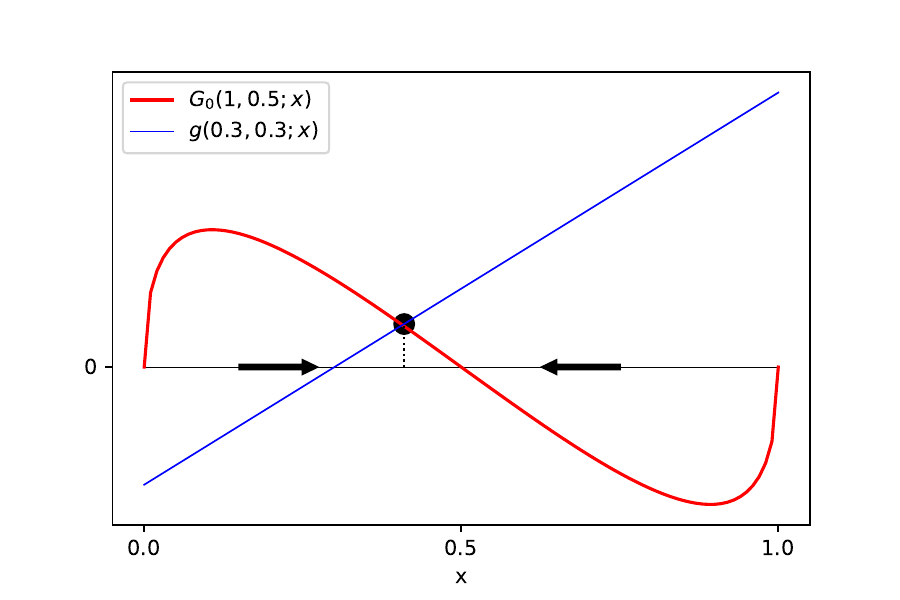}}%
  \caption{The line $g$ (see \ref{eq: lineg}) and the field $G_0$ (see (\ref{eq: fieldG0})) against wealth $x$ of agent 1. $\bullet$ marks stable and $\circ$ unstable fixed points. The arrows indicate the direction of the field $G$. }
  \label{figure: fieldA2}
\end{figure}

\begin{figure}
  \centering
  \subfloat[][]{\includegraphics[width=0.5\linewidth]{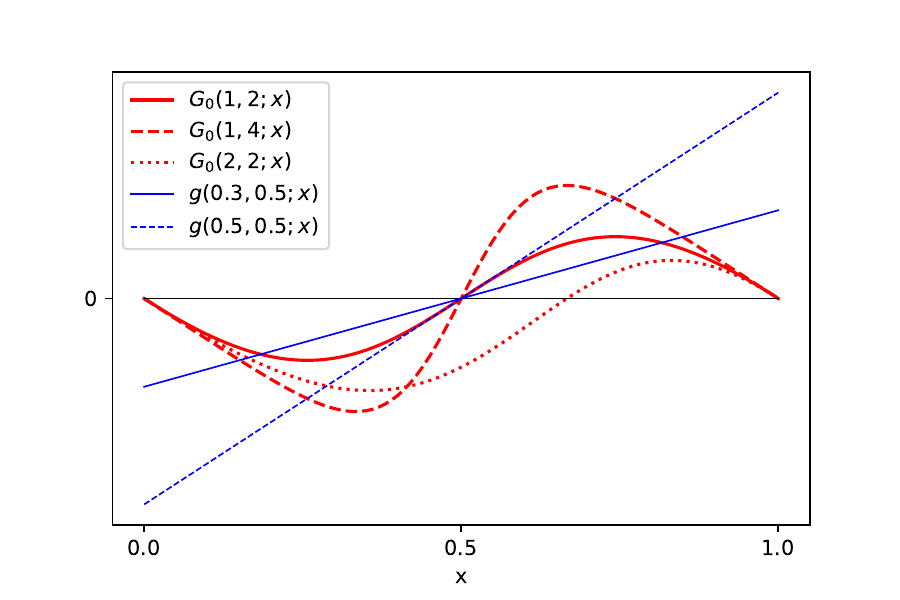}}%
  \subfloat[][]{\includegraphics[width=0.5\linewidth]{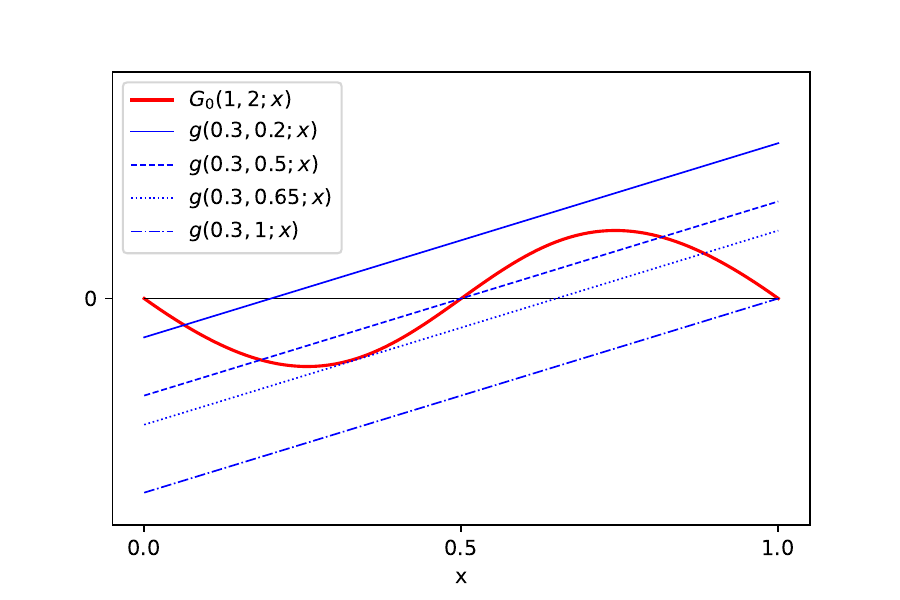}}\\
  \subfloat[][]{\includegraphics[width=0.5\linewidth]{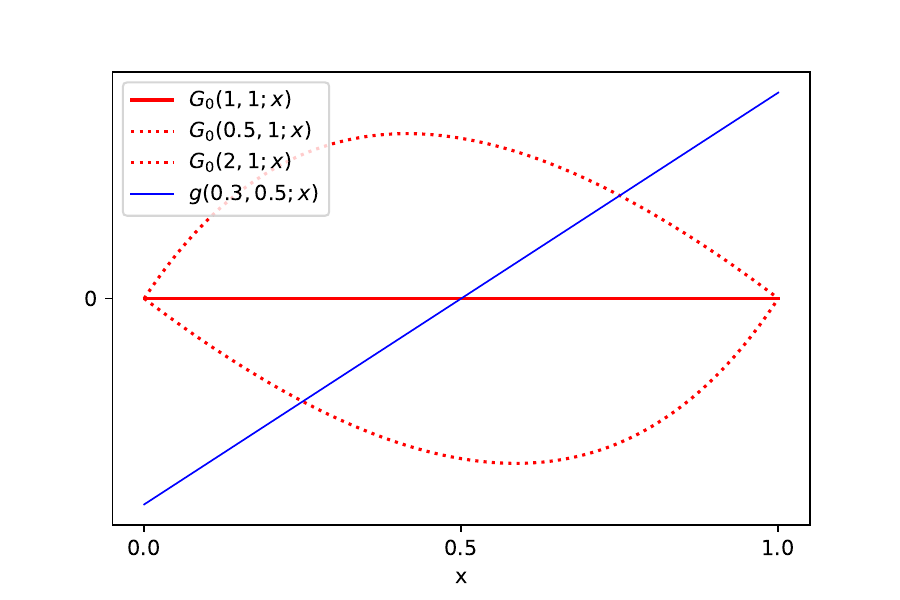}}%
  \subfloat[][]{\includegraphics[width=0.5\linewidth]{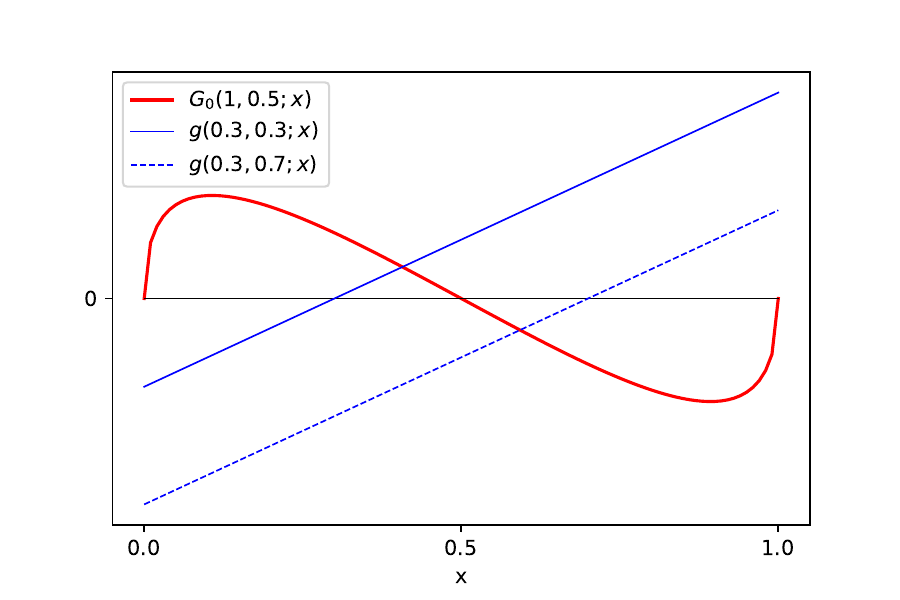}}%
  \caption{The line $g$ (see \ref{eq: lineg}) and the field $G_0$ (see (\ref{eq: fieldG0})) against wealth $x$ of agent 1 for various parameters.}
  \label{figure: intax}
\end{figure}

The situation is qualitatively illustrated in Figure \ref{figure: fieldA2} and \ref{figure: intax}. An increase of the labor share $r$ implies a larger slope of the line $g$, where the slope diverges for $r\to1$. Changes of the relative wage $\gamma$ result in a parallel shift of $g$. The impact of the fitness parameter $\alpha$ and the feedback strength $\beta$ is included in the field $G_0$. Let us now have a closer look on the possible cases.

\begin{enumerate}
    \item Figure \ref{figure: intax} (a) shows the symmetric case $\alpha=1, \gamma=0.5$ with $\beta>1$ for different labor shares $r$. It is apparent that $x=\frac12$ is the only stable fixed point of $G$ if and only if
    $$\frac{d}{dx}(p(x)-x)=\beta-1\le \frac{r}{1-r}\quad\Leftrightarrow\quad r\ge r_c\coloneqq\frac{\beta-1}{\beta}\,.$$
    For $r<r_c$, there are two stable fixed points which are symmetric w.r.t. $\frac12$. For $r\to0$, the two fixed points converge to 0 resp. 1, consistent with the strong monopoly for $r=0$. The critical share $r_c$ is increasing in $\beta$ due to the stronger feedback and converges to 1 for $\beta\to\infty$.
    
    More explicit for $\beta=2$, we have $r_c=\frac12$ and for $r<\frac12$ the two stable fixed points are given by $\frac12\pm \sqrt{1-2r}$. For other choices of $\alpha, \beta$ explicit expressions are lengthy or not known.

    In the asymmetric case $\alpha>1$, where agent 2 is fitter than agent 1, we observe in general a shift of the stable fixed points towards agent 2. Moreover, the critical share $r_c$ is smaller than for $\alpha=1$.

    \item Figure \ref{figure: intax} (b) illustrates the situation with equal fitness $\alpha=1$ of agents,  varying wage distribution $\gamma$ and fixed $r>0$. First, we note that the critical rate $r_c$ is smaller when wages are distributed unequally $\gamma\ne0.5$, i.e. for fixed $r$ we can obtain either random or deterministic limits depending on $\gamma$.

    Second, we observe that for $\beta>1$ the long-time wealth is distributed more unequal than wages. To be more precise, if $\gamma<\frac12$, then $\chi_1(\infty)<\gamma$ and vice versa. The gap between $\gamma$ and $\chi_1(\infty)$ is bigger the smaller $r$ and the larger $\beta$ is.

    Third, for fixed $r<r_c$, there there two choices of $\gamma$, such that \textbf{saddle points} occur (see e.g. line $\gamma'''\approx0.65$). These saddle points are stable from one side (from the left in Figure \ref{figure: intax}), but unstable from the other side. Hence, the process may stick to these points for a long time, but will finally escape towards the only fully stable point due to noise.

    Finally, for $\gamma=1$, weak monopoly of agent 1, i.e. $\chi_1(\infty)=1$, is possible with positive probability. But for $r<r_c$, both weak monopoly of agent 1 and positive shares for both agents are possible, depending on who wins the first steps of the process.

    \item Figure \ref{figure: intax} (c) shows the linear case $\beta=1$, where we have unique fixed points whenever $r>0$, such that $r_c=0$.  For $\alpha=1$, the fixed point is simply $\gamma$. Recall that for $r=0$ the limiting share $\chi_1(\infty)$ has a beta distribution. Changes of $\alpha$ for $r>0$ result to a distortion of the unique fixed point towards the fitter agent.

    \item  Figure \ref{figure: intax} (d) shows the situation for $\beta<1, \alpha=1$, where we still have $r_c=0$. This situation corresponds to decreasing returns in the interpretation  presented in the introduction. Here, wealth is distributed more equally than wages, i.e. we have $\chi_1(\infty)>\gamma$ whenever $\gamma<\frac12$ and $r>0$.
\end{enumerate}

In 2. we already mentioned the occurence of saddle points, which the process may approach and remain there for long time, but will eventually leave. Although this behavior seems to be rare in the two agent case as they do only occur for specific pairs of $\gamma, r$, these points are far more important in larger systems. Hence, we also have a close look on the $A=3$ case in order to deepen our understanding of the long time behavior of our process. Figure \ref{figure: field} shows the field $G$ for asymmetrical wage vector and varying $r$. We return to the original notation of $G$ introduced in Section \ref{sec: model}.

\begin{figure}
  \centering
  \subfloat[][$r=0.3$]{\includegraphics[width=0.4\linewidth]{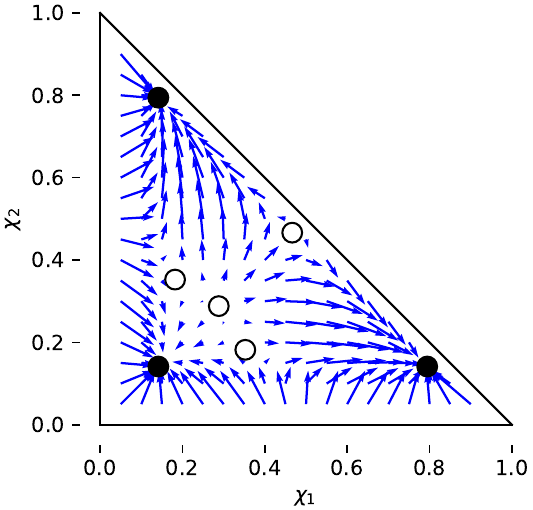}}%
  \subfloat[][$r=0.35$]{\includegraphics[width=0.4\linewidth]{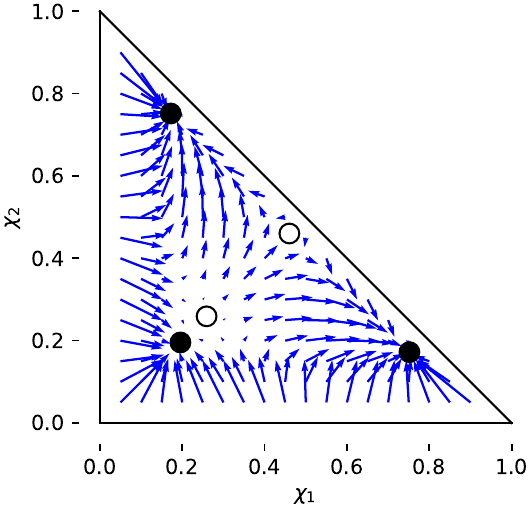}}\\
  \subfloat[][$r=0.4$]{\includegraphics[width=0.4\linewidth]{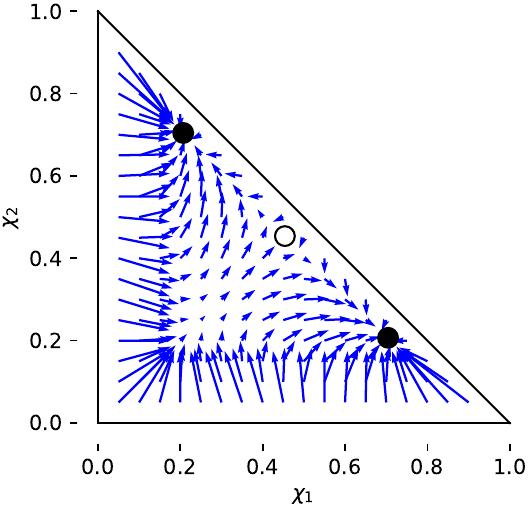}}%
  \subfloat[][$r=0.55$]{\includegraphics[width=0.4\linewidth]{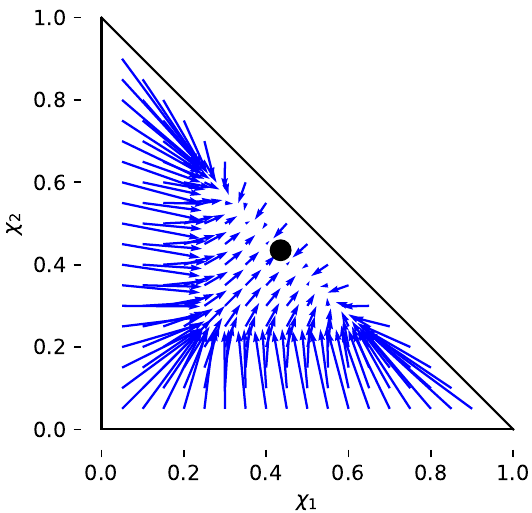}}%
  \caption{The field $G$ (see \eqref{eq: gg}) with $A=3$, $\beta=2, \alpha_1=\alpha_2=\alpha_3=1$ and $\gamma=(0.4, 0.4, 0.2)$ for various  labor shares $r$. Stable fixed points are marked with $\bullet$ and unstable fixed points with $\circ$. Their exact position has been computed with Mathematica.}
  \label{figure: field}
\end{figure}

For small $r$ (Figure \ref{figure: field} (a)), we have 7 fixed points, where the three stable ones are close to the corners of the simplex, i.e. one random agent wins the bulk of the wealth. Casually speaking, we will call these \textbf{monopoly fixed points} in the following. Moreover, there are three saddle points, where basically two agents fairly share the total wealth. For appropriate starting points, the process may first approach these points, but finally converge to one of the stable points. In addition, there is one more fully repelling point in the middle. Note that this situation is similar to the $r=0$ case discussed in \cite{wir}.

When we now slowly increase $r$ to some moderate level ((Figure \ref{figure: field} (b)), we observe that at first the saddle points, where the agent 3 (the one with the lowest wage) is involved, disappear, but the monopoly fixed point of agent 3 still exists. When $r$ is increased even more ((Figure \ref{figure: field} (c)), then the monopoly fixed point of agent 3 also disappears and only the monopoly fixed points of the agents with higher wage remain together with their saddle point. Finally, when $r$ becomes larger than $r_c$ ($\approx0.55$ in the situation of Figure \ref{figure: field}), then the two remaining stable fixed points merge and the process converges to a deterministic point.

Heuristically, we can generalize this visual grasp to larger systems $A>3$ as follows. When $r$ is small, then we have $A$ stable fixed points, which are close to the corners of the simplex. Moreover, for any subset $S\subset\{1,\ldots, A\}, \,\#S>1$, there is a saddle point, where the bulk of the total wealth is shared between the agents in $S$. The saddle points are not attained as long-time limits of the process, but they may dominate the transient behaviour of the system which is relevant in practice. In total, there are 
$2^A-1$ fixed points for small $r$. When $r$ is increased, fixed points shift towards the middle of the simplex and disappear one by another, where the monopoly fixed points of agents with low wage as well as the saddle points referred to them disappear first. $r_c$ is the minimal rate, such that only one of the fixed points survives and $r_c'$ is the labor share, where the first monopoly fixed point disappears. Consequently for "moderate" $r$, the process converges to a random monopoly fixed point, but only agents with large enough wage can be the monopolist. Facing this heuristic, we conjure that there are at most $2^A-1$ fixed points in total and that at most $A$ of them are stable.

\section{Simulations for homogeneous feedback}\label{sec: simulation}

The goal of this section is to find a parametrisation of the model introduced in Section \ref{sec: model}, such that the real distribution of wealth in Germany 2021 is reproduced as well as possible by simulation. We denote by
\begin{equation}\label{eq: CDFger}
    CDF_{ger} :\R\to [0,1]\quad\mbox{the empirical distribution function of wealth, Germany }2021\,
\end{equation}
according to the data from \cite{wid} (shown in Figure \ref{figure: WID}) and compare it to the simulated wealth distribution function $CDF_{sim}$ defined in (\ref{eq: SimulationCDF}). Currently, about 70 million adults are living in Germany, but the data on wealth distribution from \cite{wid} have a much lower resolution. Simulating a system with millions of agents would therefore be computationally very demanding with essentially no verifyable benefit. Therefore we aggregate and take $A=10,000$, such that each agent represents $0.01\%$ of the adults in reality. In particular, the $k$-th richest agent represents the $(k-1)/10,000$ to $k/10,000$ quantile in reality, which are about $7,000$ adults.

According to \cite{wid}, the average net personal wealth per adult in Germany 2021 amounts to 227,567€. One unit of wealth in our model corresponds to 10€ in reality, which is a rather fine resolution. Note that our model is also  scale-invariant in the sense that transition probabilities \ref{eq: transpr} are invariant under a re-scaling of $X(n)$, therefore the choice of wealth units is not critical. We will simulate $n=280,000,000$ steps of our process, such that the average wealth after $n$ steps equals approximately the average wealth in reality. We will see in Figure \ref{figure: Simulations} that the wealth distribution is stable after $n$ steps, so that distributing wealth in smaller units would would not yield any further insight. The wealth distribution after simulating the model \eqref{eq: transpr} for $n$ steps is then given by
\begin{align}\label{eq: SimulationCDF}
    CDF_{sim}\colon \R\to[0, 1],\,w\mapsto\frac{1}{A}\sum_{i=1}^A\mathds{1}_{\{10X_i(n)\le w\}} \ .
\end{align}

We aim to reproduce the $CDF_{ger}$ from generic small initial data and take an initial configuration $X(0)$, such that each agent has only one unit on average. Recall that the set of possible limiting wealth distributions is independent of $X(0)$ as explained in Section \ref{sec: model}, but the probabilities that a certain limit point is attained, does depend on $X(0)$.

In this section, we first consider symmetric and homogeneous feedback, i.e. $F_i(k)=k^\beta$ for some $\beta>1$, and set $\alpha_i=1$ for all agents, i.e. we neglect the effect of personal skills on capital returns. Of course, this is a simplifying assumption as there is probably a positive correlation between investment skills and wages which we will investigate in Section \ref{sec: alpha}. To first approximation, it appears justified to assume that similarly affluent agents invest their money similarly and therefore achieve similar expected return rates. 
This is in particular plausible 
since each agent in our simulation represents 7,000 people in a corresponding wage-class in reality, so we can not account for completely untypical behavior of some individuals anyway. In addition, we can think of people to improve their investment skills the more capital they have for investment, which are thus correlated with wealth (as captured by our parameter $\beta>1$) rather than with wages.
In Subsections \ref{subsec: gamma}, \ref{subsec: r} and \ref{subsec: beta} we fit the parameters $\beta, \gamma$ and $r$ and finally show simulation results in Subsection \ref{subsec: sim}.

\subsection{The wage-vector $\gamma$}\label{subsec: gamma}

In our model, the distribution of wages is an exogenous parameter, which is invariant in time and is represented by the normalized vector $\gamma$. Wages have a significant impact on the wealth of poor agents, whereas the wealth of the rich is mainly determined by capital returns. As this work focuses on modelling the wealth distribution of the rich, i.e. the power law tail mentioned in the introduction, we are content with a rather rough wage-model. We will use data on German wages in 2018 as given in \cite{destatis} (see Figure \ref{figure: r} (a)). Note that only the shape of the wage-distribution is relevant since $\gamma$ is normalized. The wage distribution is derived from data on income tax and contains income from employed and self-employed labor. Capital returns are basically not included as it is not subject to income tax in Germany (there is a flat-rate tax instead). The only exception worth mentioning is rental income, which only poses 2\% of total income and can hence be neglected, too. Let $\tilde\gamma_1,\ldots\tilde\gamma_{A}$ be drawn independently from the distribution shown in Figure \ref{figure: r} (a), where we assume uniform distributions within the intervals. For the agents with wage $>1,000,000$ Euro, we suppose an exponential tail and thus all $\tilde\gamma_i$ are distinct.

\begin{figure}
  \centering
   \subfloat[][]{\includegraphics[width=0.5\linewidth]{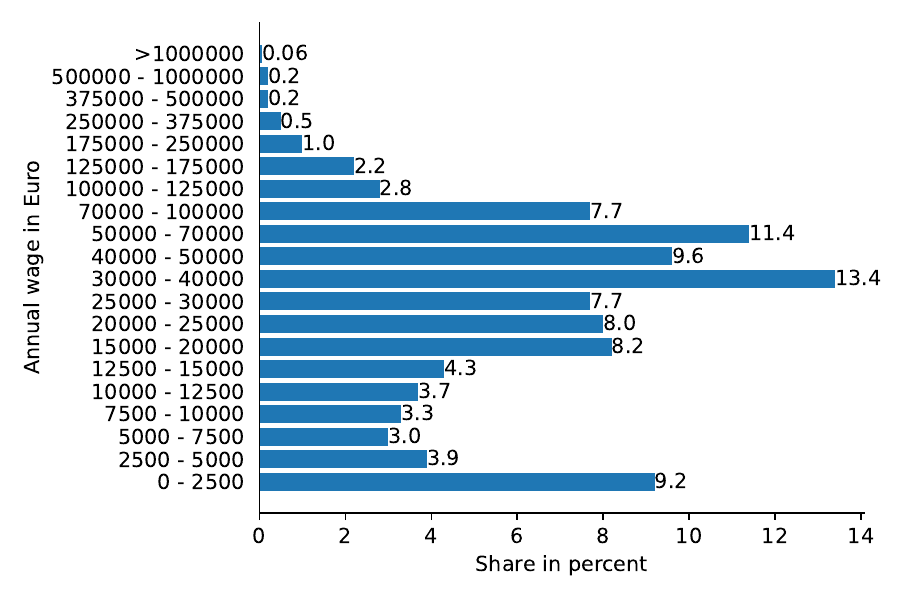}}%
  \subfloat[][]{\includegraphics[width=0.5\linewidth]{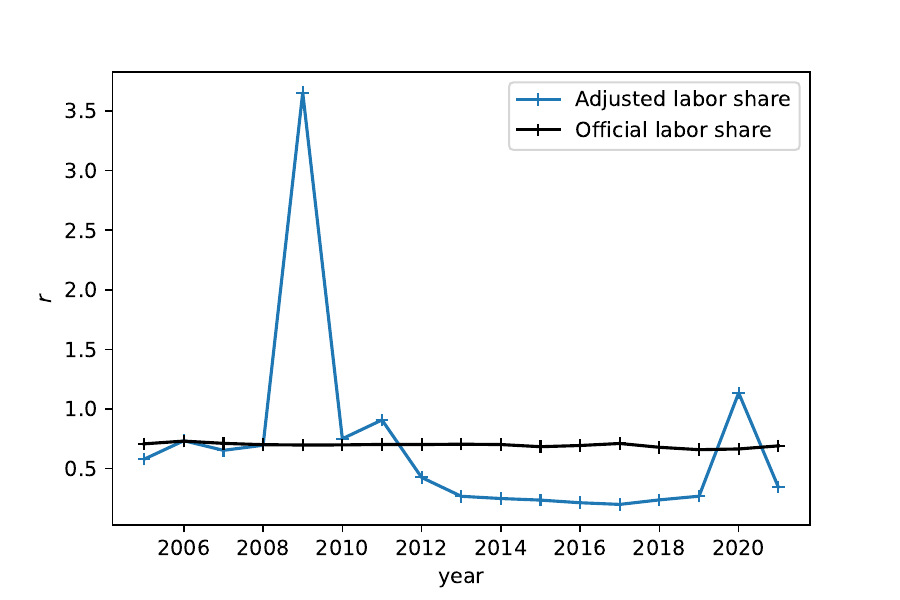}}%
  \caption{(a) shows the distribution of annual net wages in Germany 2018 per taxpayer based on \cite{destatis}. (b) presents empirical values of the labor share r for several years in Germany computed according to formula (\ref{eq: r}) using data from \cite{wid} and \cite{destatis2} compared to the official labor share from \cite{laborShare}.}
  \label{figure: r}
\end{figure}

In order to generate realistic wealth distributions in our model, it is important to distinguish that wages can either be used for consumption or for investment. Hence, we are less interested in the pure wage distribution modelled by $\tilde\gamma$ than in the distribution of savings, which add to the wealth of an agent and generate capital returns. It stands to reason that the agents with lowest wage need essentially all of it for consumption, whereas the highest-payed agents can invest almost their entire wages since even luxury consumption like jewelry and real estate increases value. For simplicity, we assume a linear relationship between the saving rate and the index of ordered wages $\tilde\gamma_{1:A}<\ldots<\tilde\gamma_{A:A}$, such that the agent with rank $i{:}A$ saves a fraction $\frac iA$ of their wage. Hence, we define the normalized sample $\gamma$ as
$$\gamma_i\coloneqq\frac{\tilde\gamma_{i:A}\, i/A}{\sum_{j=1}^A \tilde\gamma_{j:A}\, j/A} =\frac{i\tilde\gamma_{i:A}}{\sum_{j=1}^A j\tilde\gamma_{j:A}}\quad\text{for all }i\in[A]\ .
$$
More detailed information on saving rates depending on income in Germany can be found in e.g. \cite{sparquote}, confirming our linear interpolation as an appropriate approximation. Of course, there is much more refined research on the distribution of income like \cite{chatterjee}, most of which include capital returns in their data and are therefore not suitable for our purpose.

\subsection{The labor share $r$}\label{subsec: r}

In official macroeconomic accounting, the labor share is defined as the part of the national income allocated to wages, which fluctuates in Germany between 64\% and 72\% since reunification 1991 \cite{laborShare}. In our model, however, the parameter $r$ rather represents the part of the wealth increase that is due to savings from wages. Hence, it is not useful to simply set $r\approx0.7$ for several reasons. First, national income does not encompass an increasing value of existing assets like real estate or corporate stocks, which reinforces the significance of capital on the growth of personal wealth. Second, the national income contains consumption, which does not add to wealth. The share used for consumption is presumably higher for wages than for capital returns, which again increases the significance of capital returns for wealth aggregation. Third, the effect of different taxation is not taken into account in the official labor share. 

As a consequence, we estimate the parameter as
\begin{equation}
    r=\frac{\text{average net wage $*$ average savings rate}}{\text{increase of average wealth}}
    \label{eq: r}
\end{equation}
for a fixed period of time. For the increase of average personal wealth, we take data from \cite{wid} again. \cite{destatis2} provides information about average net wages and saving rates.

Figure \ref{figure: r} (b) shows empirical values of $r$ according to formula (\ref{eq: r}) for several years. We observe extreme peaks in 2009 and 2020, which are due to the financial resp.\ the Covid crisis, where the increase of wealth was small, whereas wages are less sensitive to such events. Before 2020, the saving rate fluctuated slightly around 10\%.  Between 2013 and 2019, the empirical r values are stable between 20\% and 27\% percent. This low level is due to strong increases in value of real estate and stocks, caused by zero interest politics. Before the financial crisis, our adjusted labor share widely coincided with the official share. In the following, we will mostly use $r=0.3$ but also consider higher values and show in detail how they affect our results in Section \ref{sec: discussion}.

\subsection{The reinforcement parameter $\beta$}\label{subsec: beta}

After fixing the parameters $\gamma$ and $r$, we finally have to find an appropriate choice for $\beta$. The parameter $\beta$ regulates the reinforcement mechanism of increasing returns, i.e. $\beta=1$ corresponds to constant expected return rates and $\beta>1$  corresponds to increasing returns. Hence, reinforcement for $\beta >1$ determines the deviation of the wealth distribution from the wage distribution. We will estimate $\beta$ directly from the shape of the desired wealth distribution shown in Figure \ref{figure: WID} by adjusting $\beta$ such that $CDF_{ger}$ is fairly stable under the dynamics \eqref{eq: transpr}, since the empirical wealth distribution can be considered as stable in time up to scaling. From (\ref{eq: G}), it is easy to see that for any fixed $\beta$ and $x$, there is a unique $r$ minimizing $\|G(x)\|$. 

\begin{figure}
  \centering
  \subfloat[][]{\includegraphics[width=0.5\linewidth]{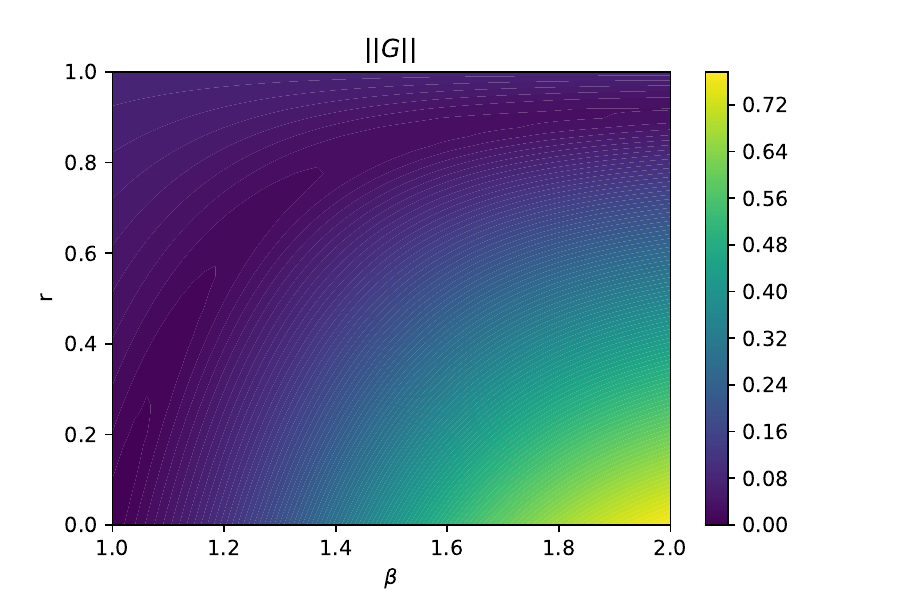}}%
  \subfloat[][]{\includegraphics[width=0.5\linewidth]{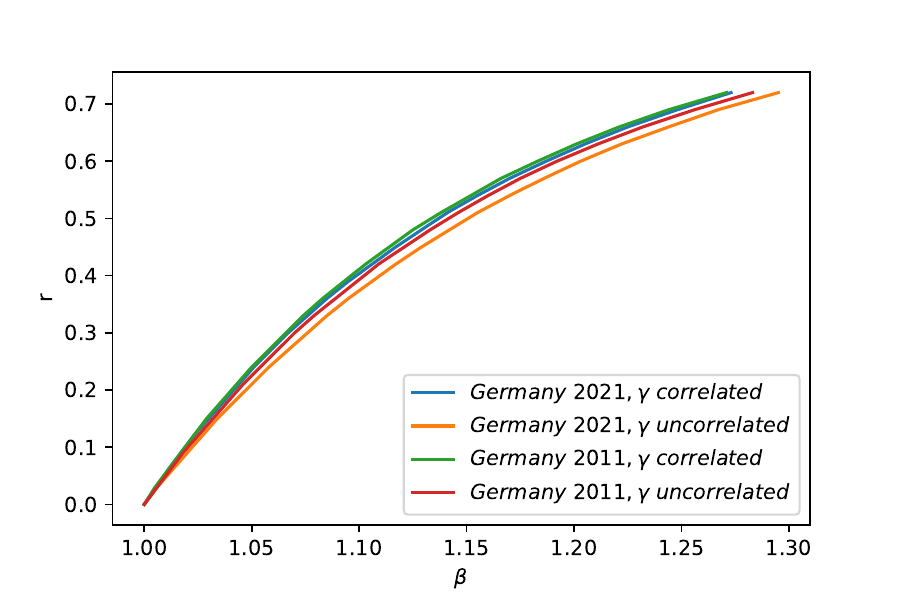}}%
  \caption{The left figure shows a contour plot of $\|G(x_{ger})\|$ with a normalized sample $x_{ger}$ from $CDF_{ger}$ \eqref{eq: CDFger} for different values of $r$ and $\beta$, where $\gamma$ and $x_{ger}$ are fully correlated. The right one shows the minimizing $r-\beta$-line for different years where $\gamma$ and $x_{ger}$ are fully correlated or uncorrelated.}
  \label{figure: r-b-line}
\end{figure}

\begin{proposition}\label{prop: rbLine}
    For any fixed $\beta\in\R, x, \gamma\in\Delta_{A-1}$, the Euclidean norm $\|G(x)\|$ is minimal when
    $$r=r^\star(\beta)\coloneqq1-\frac{\langle p(x)-\gamma, x-\gamma\rangle}{\|p(x)-\gamma\|^2}$$
    If $r^\star(\beta)<0$ (resp. $>1$), it has to be replaced by $0$ (resp. $1$). 
\end{proposition}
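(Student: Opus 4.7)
The plan is to observe that $G(x)$ is an affine function of $r$, so $\|G(x)\|^2$ is a convex quadratic in $r$ whose unconstrained minimum is given by the standard least-squares formula, and then verify algebraically that this formula coincides with $r^\star(\beta)$ as stated.

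First I would rewrite \eqref{eq: G} by regrouping terms in $r$:
\begin{equation*}
G(x) \;=\; (1-r)p(x) + r\gamma - x \;=\; \bigl(p(x)-x\bigr) \;-\; r\bigl(p(x)-\gamma\bigr).
\end{equation*}
Setting $a \coloneqq p(x)-x$ and $b \coloneqq p(x)-\gamma$ (both independent of $r$), this is simply $G(x) = a - rb$, so
\begin{equation*}
\|G(x)\|^2 \;=\; \|a\|^2 \;-\; 2r\langle a,b\rangle \;+\; r^2\|b\|^2,
\end{equation*}
a convex quadratic in $r$. Differentiating and setting the derivative to zero gives the unique unconstrained minimizer
\begin{equation*}
r \;=\; \frac{\langle a,b\rangle}{\|b\|^2} \;=\; \frac{\langle p(x)-x,\,p(x)-\gamma\rangle}{\|p(x)-\gamma\|^2},
\end{equation*}
provided $b \neq 0$ (the degenerate case $p(x)=\gamma$ would make $\|G(x)\|$ constant in $r$ and can be noted in a single line).

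Next I would check that this expression agrees with the one in the statement. Writing $p(x)-x = (p(x)-\gamma) - (x-\gamma)$ and expanding the inner product gives
\begin{equation*}
\langle p(x)-x,\,p(x)-\gamma\rangle \;=\; \|p(x)-\gamma\|^2 \;-\; \langle x-\gamma,\,p(x)-\gamma\rangle,
\end{equation*}
so dividing by $\|p(x)-\gamma\|^2$ yields exactly $r^\star(\beta) = 1 - \frac{\langle p(x)-\gamma,\,x-\gamma\rangle}{\|p(x)-\gamma\|^2}$.

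Finally, since $r$ is constrained to $[0,1]$ and $r \mapsto \|G(x)\|^2$ is a convex parabola, its minimum on $[0,1]$ is attained at $r^\star(\beta)$ if $r^\star(\beta)\in[0,1]$, at $0$ if $r^\star(\beta)<0$, and at $1$ if $r^\star(\beta)>1$; this justifies the replacement rule in the statement. I do not expect any real obstacle: the argument is a direct least-squares computation, and the only subtlety is the algebraic identification of two equivalent expressions for the minimizer, which is a short manipulation.
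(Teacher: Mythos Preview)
Your proposal is correct and follows essentially the same approach as the paper: both write $G(x)$ as an affine function of $r$, minimize the resulting quadratic $\|G(x)\|^2$ by differentiation, and then algebraically rewrite the minimizer in the form $1-\frac{\langle p(x)-\gamma, x-\gamma\rangle}{\|p(x)-\gamma\|^2}$. Your version is slightly more careful, since you explicitly address the degenerate case $p(x)=\gamma$ and justify the clipping to $[0,1]$ via convexity, which the paper's proof leaves implicit.
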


\begin{proof}
Define $w=p(x)-x$ and $v=\gamma-x$, such that $G(x)=(1-r)w+rv$ is a convex combination of $v$ and $w$. Then:
\begin{align*}
    \frac{d}{dr}\|G(x)\|^2&=\frac{d}{dr}\sum_{i=1}^A((1-r)w_i+rv_i)^2=2\sum_{i=1}^A((1-r)w_i+rv_i)(v_i-w_i)\\
    &=2\sum_{i=1}^A\left(r(v_i-w_i)+w_i\right)(v_i-w_i)=2r\|v-w\|^2+2\langle w, v-w\rangle\\
    &=2r\|\gamma-p(x)\|^2+2\langle p(x)-x, \gamma-p(x)\rangle
\end{align*}
Since $\|G(x)\|^2$ is a non-negative quadatic polynomial in $r$, the unique minimum is $$r=\frac{\langle p(x)-x, p(x)-\gamma\rangle}{\|p(x)-\gamma\|^2}=1-\frac{\langle x-\gamma, p(x)-\gamma\rangle}{\|p(x)-\gamma\|^2}\,.$$
\end{proof}

$r^\star(\beta)$ can be interpreted as the orthogonal projection of  $x-\gamma$ on $p(x)-\gamma$. When $x-\gamma$ and $p(x)-\gamma$ are negatively correlated, i.e. $\langle x-\gamma, p(x)-\gamma\rangle<0$, then $r=1$ is optimal. If $\|p(x)-\gamma\|<\|x-\gamma\|$ and the angle between $x-\gamma$ and $p(x)-\gamma$ is small, then $r=0$ is optimal. Moreover, if $(x_i, \gamma_i)_{i\in[A]}$ is a normalized sample of a positive random vector $(X, \Gamma)$, then
$$r^\star(\beta)\approx1-\frac{Cov\left(\frac{X}{\E X}-\frac{\Gamma}{\E \Gamma}, \frac{X^\beta}{\E X^\beta}-\frac{\Gamma}{\E\Gamma}\right)}{\E\left(\frac{X^\beta}{\E X^\beta}-\frac{\Gamma}{\E \Gamma}\right)^2}$$
for large $A$. Hence, $r^\star(\beta)$ is asymptotically (for $A\to\infty$) independent of $A$.


Figure \ref{figure: r-b-line} (a) shows a contour plot of $\|G(x_{ger})\|$ for different choices of $r, \beta$, where $x_{ger} \in\Delta_{A-1}$ is a normalized sample from the empirical wealth distribution $CDF_{ger}$ \eqref{eq: CDFger}. This indicates that the relation between the parameters is indeed one-by-one, i.e. for any given $r$ there is exactly one optimal $\beta$. Figure \ref{figure: r-b-line} (b) underlines, that the resulting r-$\beta$-line is fairly stable in time and not very sensitive on the correlation between wage $\gamma_i$ and wealth $x_i$. The lines are very similar when wage and wealth is assigned independently or when they are fully correlated. Moreover, the point $\beta=1,\, r=0$ lies on the curve of optimal points as $G$ vanishes in that case. Since we already fixed $r=0.3$ in Subsection \ref{subsec: r}, the minimum $\|G(x_{ger})\|=0.0006$ is attained for $\beta=1.068$. For the following, we consider the rounded value $\beta=1.1$ as an appropriate choice, which is also consistent with previous independent estimates of this reinforcement parameter \cite{Vallejos, forbes}.

\subsection{Simulation results}\label{subsec: sim}

Figure \ref{figure: Simulations} (a)  shows the results of simulations with parameters $A, n, r, \beta, \gamma$ as specified above and several initial configurations. For comparison, we simulate for symmetric $X(0)=(1,\ldots, 1)$, for independent exponentially distributed $X_i(0)$, for independent Pareto distributed $X_i(0)$ (exponent 1.5) and for $X(0)=A \gamma$, i.e. $X(0)$ and $\gamma$ are fully correlated. 
In each case agents start with one unit of wealth on average. The simulated wealth distribution $CDF_{sim}$ (\ref{eq: SimulationCDF}) after $n$ steps 
is both compared to the real wealth distribution $CDF_{ger}$ in Germany 2021 (black line) and to the CDF of scaled wages $228,000\gamma$ (blue line), which describes the long-time wealth distribution in a hypothetical world with constant return rates ($\beta=1$ ).

\begin{figure}
  \centering
  \subfloat[][]{\includegraphics[width=0.5\linewidth]{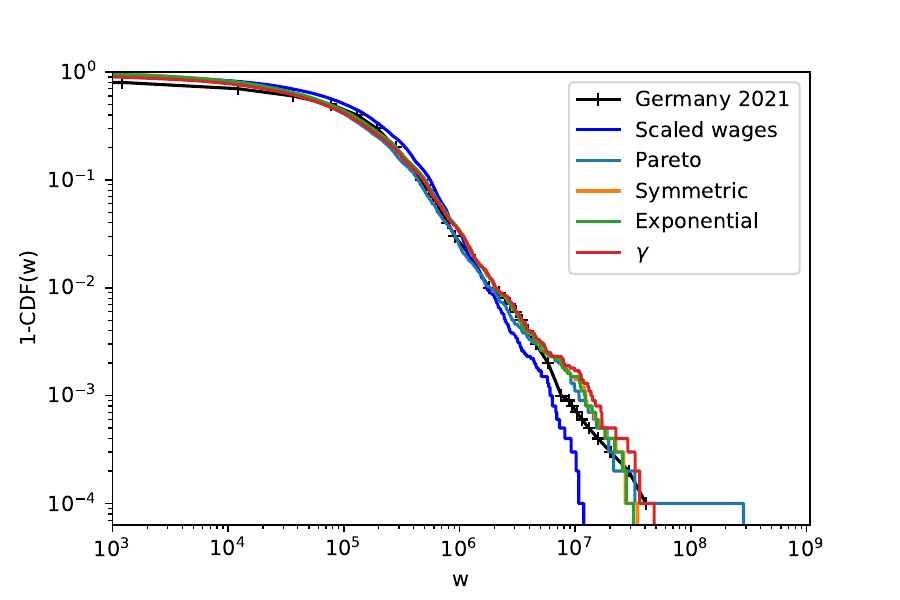}}%
  \subfloat[][]{\includegraphics[width=0.5\linewidth]{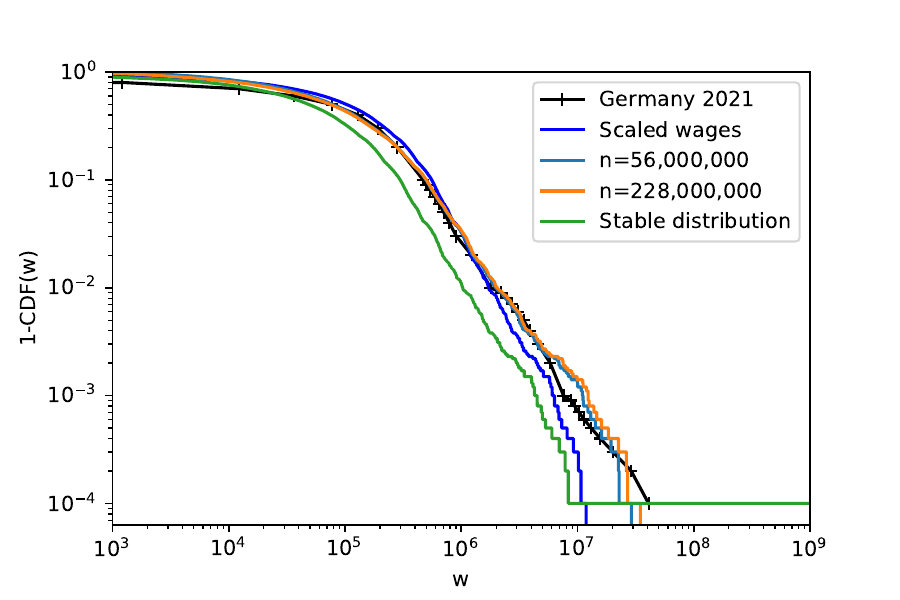}}%
  \caption{(a) shows $1-CDF_{sim}$ \eqref{eq: SimulationCDF} for different initial distributions compared to $1-CDF_{ger}$ \eqref{eq: CDFger} (black line) and $1-CDF$ of scaled wages $228,000\gamma$ (blue line) after $n=228,000,000$ steps. For symmetric initial condition $X(0)=(1,\ldots, 1)$, (b) shows additionally 1-CDF of an intermediate step, scaled to the same mean (light blue). Moreover, the green line shows a stable distribution with $\| G(x)\| =0$ obtained using Euler's method for the ODE \eqref{eq: ODE} starting in $X(n)$. In all simulations, we took $A=10,000,\, \beta=1.1,\,r=0.3$ and $\gamma$ as explained in Subsection \ref{subsec: gamma}.}
  \label{figure: Simulations}
\end{figure}

Basically, all simulations presented in Figure \ref{figure: Simulations} (a) reveal an astonishing accuracy in reproducing $CDF_{ger}$. The Gini coefficient in our simulations varys between 0.723 (exponential case) and 0.758 (Pareto case) compared the empirical value of 0.75 in Germany 2021 (see \cite{wid}). Apart from the net worth of the richest agent, all simulations yield similar wealth distributions, which is consistent with the independence of initial configuration explained in Section \ref{sec: model}. In fact, the bottom 99\% of population is already quite well described by simply re-scaling the wage distribution, since their wealth is mainly determined by savings from wages. Apparently, the impact of increasing returns is negligible for these agents. In contrast to that, the richest percentile reveals a much greater wealth inequality in reality than in the scaled wages distribution, which is well-established for empirical data \cite{CorrelationPaper, quadrini}. The proposed reinforcement mechanism of the Pólya urn model provides an accurate explanation for this significant gap.

Noticeably, the wealth of the richest agent seems to be severely overestimated in the simulation with Pareto distributed initial configuration, whereas it is underestimated in the other simulations. This observation can be underlined by considering the share of total wealth owned by certain parts of the population. The following table shows wealth shares in our simulation in comparison to German data from \cite{wid}.

\begin{center}
    \begin{tabular}{l|c c c c c}
         Share of richest & 50\% & 10\% & 1\% & 0.1\% & 0.01\%   \\ \hline
         Germany 2021 & 96.6\% & 58.9\% & 28.6\% & 14.3\% & 8.17\%\\
         Pareto & 95.3\% & 63.1\% & 34.2\% & 19.4\% & 12.4\%\\
         Exponential & 94.5\% & 57.9\% & 24.6\% & 8.5\% & 1.4\%\\
         Symmetric & 94.7\% & 58.2\% & 24.9\% & 8.7\% & 1.5\%\\
         $\gamma$ & 95.6\% & 61.2\% & 28.2\% & 10,8\% & 2.1\%\\
    \end{tabular}
\end{center}

In fact, this ostensible discrepancy can easily be explained by the enormous variance of wealth within the group of the richest 0.01\%. For instance, this group of approximately 7,000 agents starts at a net worth of 40 million Euro and contains 138 billionaires, which own up to 40 billion Euro according to the Forbes List 2021. This billionaire effect has been investigated in detail in \cite{forbes} for UK data and is already hinted at in the tail behavior of Figure \ref{figure: WID}. This extremely heterogeneous group is only represented by one agent in our simulation, such that the wealth share of all other agents is significantly impacted by the choice of the representative of the richest group. E.g. in the Pareto case, a rather rich representative of the richest group has been chosen, leading to an overestimation of the wealth share of the richest 0.1\% since this group also contains the representative of the 0.01\% and vice versa for the other simulations. Hence, we should rather consider adjusted wealth shares, where the effect of the richest agent is erased. To be more precise, if $s(\epsilon)$ is the wealth share of the richest $A\epsilon,\,\epsilon\in\{0.1, 0.01, 0.001\},$ agents, then the adjusted share is defined by 
$$s_{ad}(\epsilon)\coloneqq\frac{s(\epsilon)-s(0.0001)}{1-s(0.0001)}\,,$$
i.e. $s_{ad}$ is the wealth share, when the richest agent is removed from the system. The following table shows the adjusted wealth shares in our simulations and in reality.

\begin{center}
    \begin{tabular}{c|c c c c c}
         Adjusted share of richest & Germany & Pareto & Exponential & Symmetric & $\gamma$  \\ \hline
         1\% & 22.2\% & 24.8 \% & 23.5\% & 23.7\% & 26.6\% \\
         0.1\% & 6.6\% & 7.9\% & 7.2\% & 7.3\% & 8.8\%
    \end{tabular}
\end{center}
With these adjusted shares, we retrieve the good accuracy of all simulations, which we already observed in Figure \ref{figure: Simulations}.\\

\begin{figure}
  \centering
  \subfloat[][]{\includegraphics[width=0.5\linewidth]{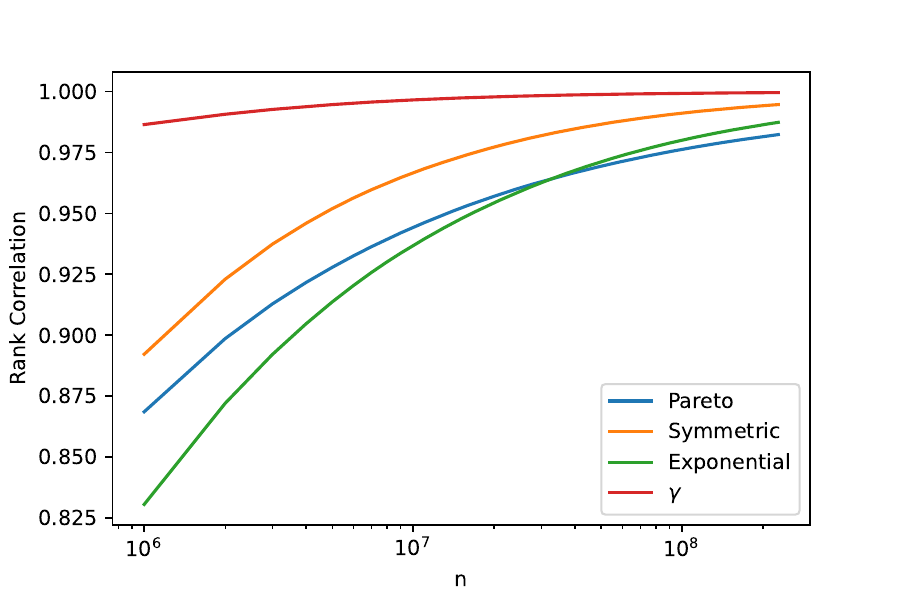}}%
  \subfloat[][]{\includegraphics[width=0.5\linewidth]{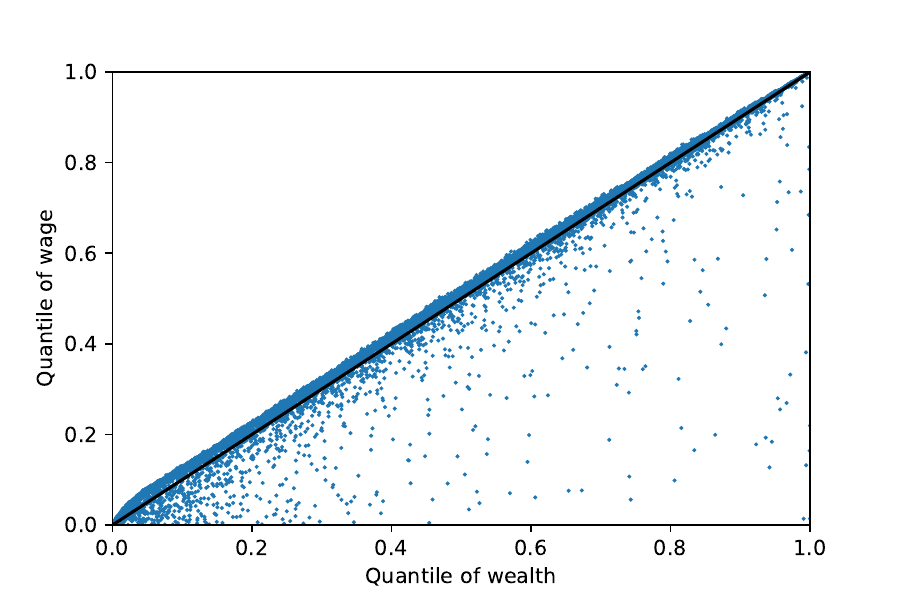}}%
  \caption{The left figure shows the evolution of rank correlation between $X(n)$ and $\gamma$ for different initial conditions. For the Pareto case, each point in the right figure represents one agent and shows its rank in $\gamma$ vs. the rank in $X(n), n=280,000,000$.
  }
  \label{figure: Rank}
\end{figure}

\textbf{Correlation between wages and wealth.} Figure \ref{figure: Rank} (a) shows the evolution of rank correlation between $X(n)$ and $\gamma$. In fact, the rank correlation approaches one in  all simulations, even when $X(0)$ is drawn independently of $\gamma$. Hence, our process is strongly ordering and the agents with high wage tend to become the richest agents. 
This feature of our model is opposed to empirical findings, e.g. \cite{CorrelationPaper} provides comprehensive statistical information about the common distribution of wealth and income in Germany and estimates a much weaker rank correlation of 0.49 between income and net worth. \cite{CorrelationPaper} mentions the importance of splitting up wealth through inheritance as a main reason for this moderate correlation, which is not captured by our model. The more-generation model presented in \cite{benhabib2} particularly focuses on the impact of inheritance. Moreover, there is an intrinsic difference between \cite{CorrelationPaper} and our correlation, because our $\gamma$ rather represents savings than income, which obviously increases the correlation with wealth. Nevertheless, agents, who start with large initial wealth, may keep their advantage in our model for quite a long time as shown in Figure \ref{figure: Rank} (b). Even after $280,000,000$ million steps, there are some agents with only little wage among the richest. This barely occurs for more equal initial configurations. On the other hand, agents cannot have high wages and only little wealth since our process is a pure growth process and wages are assumed to be deterministic.\\

\begin{figure}
  \centering
  \subfloat[][Total population]{\includegraphics[width=0.5\linewidth]{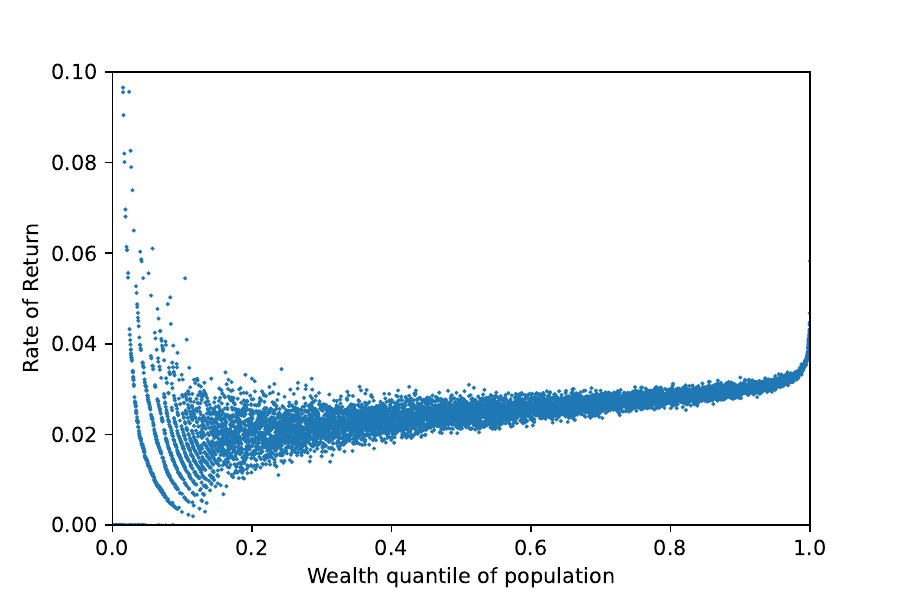}}%
  \subfloat[][Top 10\%]{\includegraphics[width=0.5\linewidth]{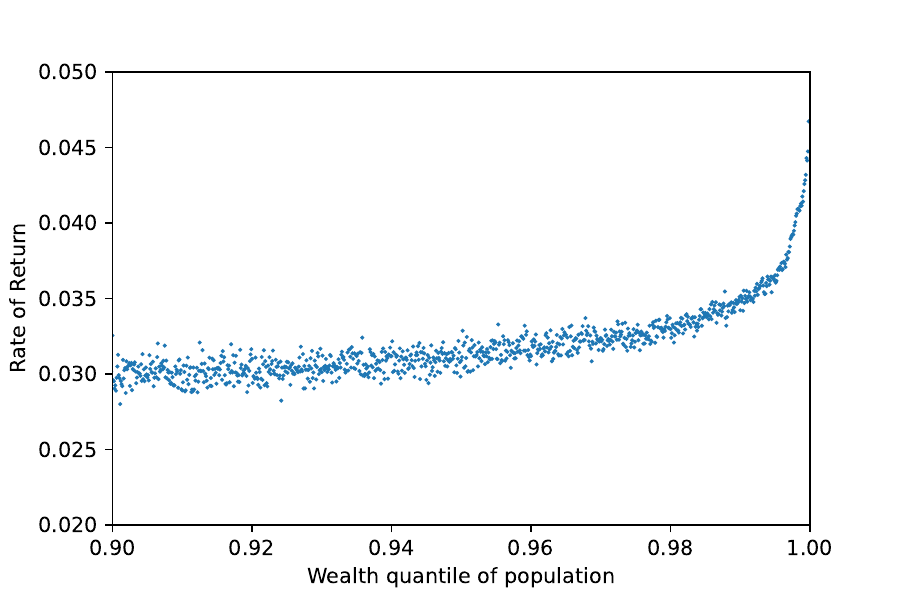}}%
  \caption{Rate of Return on capital between step $n=216,000,000$ and $n=228,000,000$ in the Pareto case plotted against wealth quantiles. This corresponds to the year 2020.}
  \label{figure: RoR}
\end{figure}

\textbf{Increasing returns.} As pointed out before, the generalized Pólya urn model with $\beta>1$ implements the idea of increasing returns, where capital return rates are higher for richer agents. But how does this dependency look like in detail? In order to compute capital return rates in our model, we identify step 216,000,000 with the year 2020, corresponding to an average wealth of 216,327 Euro in 2020. Then we define the rate of return of agent $i\in[A]$ in 2020 as the relative gain in wealth which is not due to wages received,
$$RoR_i\coloneqq\frac{X_i(228,000,000)-X_i(216,000,000)-12,000,000\cdot r\gamma_i}{X_i(216,000,000)} \ .
$$
Figure \ref{figure: RoR} (a) shows these RoR for all agents in the Pareto case, but corresponding plots for the other simulations look essentially the same. For the bottom 10\%, we oberserve an enormous variance of the RoR. Whereas many agents had no capital returns at all, some others won one or two  steps of the process by luck, leading to large RoRs due to their low level of wealth. This also explains the observable stratified shape of the wealth-RoR-plot for the bottom $20\%$. For the broad middle class in our simulation, we observe a moderate variance of RoR and only a slight dependence on wealth. Hence, increasing returns mainly concern the rich. Figure \ref{figure: RoR} (b) shows the RoR of the top 10\% in detail. A significant increase of RoR can only be detected for the top 1\% of the agents. This is consistent with the observation from Figure \ref{figure: Simulations} that the wealth distribution of the bottom 99\% is almost equivalent to the "scaled wages"-distribution. Using data from Norway, \cite{fagereng} empirically investigates the dependence of return rates and wealth, which reveals a similar shape of the wealth-return-curve. Moreover, they emphasize that this shape is persistent in time apart from extreme events like the financial crisis, where even decreasing returns could be observed. Almost constant returns for the majority of the population and strongly increasing return rates for the top induce the two-tailed wealth distribution mentioned in the introduction.\\

\subsection{Time evolution and predictions for the future}\label{sec: future}

Our model seems to be appropriate for describing the present, but does it also reproduce the empirical wealth dynamics of the last decades? And if yes, what does it predict for the future? Since all our simulations are equally valid, we focus in this subsection on the one with symmetric initial condition, which is shown in Figure \ref{figure: Simulations} (b). We observe that the wealth distribution after 1/4 of the steps is quite similar to the final distribution, so the attained wealth distribution is fairly stable at least on a moderate time horizon. To identify the steps of our process with years in reality, we consider in this chapter the time changed process 
\begin{equation}\label{eq: realtime}
    t\mapsto Z^{(N)}(t):=\chi \left(\lfloor \left((1+\mu)^t-1\right)N\rfloor\right)\,,
\end{equation}
where $t\geq 0$ is the \textbf{real time} measured in years and $\mu=\mu(t)$ is the annual growth rate of our economy. For example step $n=96,000,000$ represents the year 1995 supposing empirical growth rates from \cite{wid}.  This coincides with an average wealth per agent of approximately 96,000 Euro in 1995. Despite some historical shocks, assuming constant growth $\mu=0.03$ is a good approximation over the last 100 years (see Figure \ref{figure: wealth_timeline}), such that we will use this assumption for our future predictions.

\begin{figure}
  \centering
  \subfloat[][Share of richest 1\%]{\includegraphics[width=0.5\linewidth]{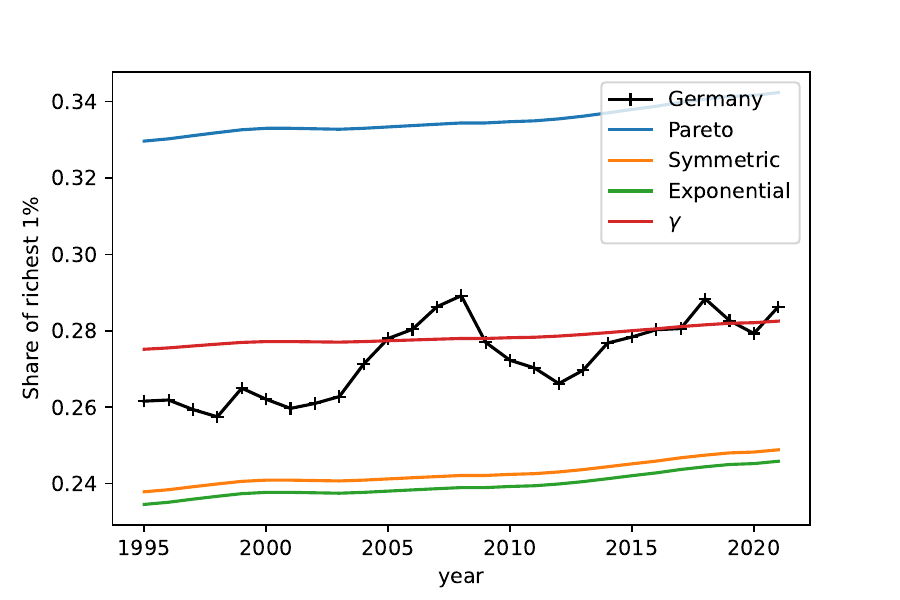}}%
  \subfloat[][Share of richest 10\% to 1\%]{\includegraphics[width=0.5\linewidth]{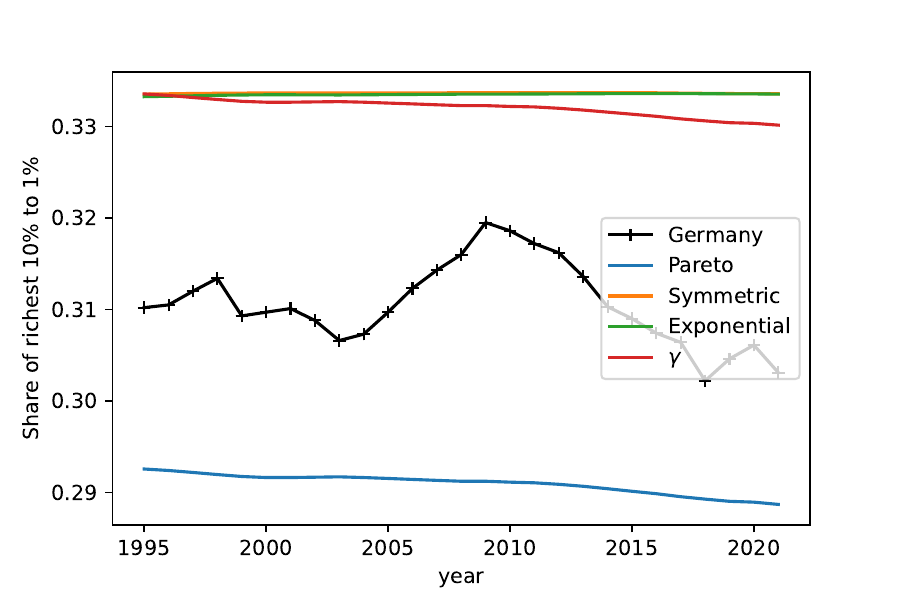}}%
  \caption{The evolution of the wealth shares of the richest 1\% and the following 9\% in our Simulations compared to German data from \cite{wid}. Years have been assigned via (\ref{eq: realtime}) using empirical $\mu$ from \cite{wid}.}
  \label{figure: Share}
\end{figure}

Figure \ref{figure: Share} (a) presents the evolution of the wealth share of the top 1\% in our simulations and in reality, all of which reveal a moderate increase. The small differences in the total level of that share have already been discussed above. It should also be noted that the development is much smoother in our simulations than in reality. This is again due to the fact that our model does not encompass the impact of economic shocks. The financial crisis in 2008, for example, led temporarily to a decreasing share of the richest due to falling stock and real estate markets. Figure \ref{figure: Share} (b) shows the wealth share of the 10-1\% quantile, which is almost stagnant with a slightly decreasing trend. Hence, only the richest managed to slowly improve their position over the last decades at the expense of the middle class and even the "moderate" upper class, in reality as well as in our simulations. Consequently, our model did also accurately reproduce the past dynamics of wealth, which justifies to use our model for future predictions. \\

 In Appendix \ref{sec: llnTax}, we present a functional law of large numbers for our process, stating that the process is  asymptotically deterministic for large initial values and the dynamics are driven by the field $G$ \eqref{eq: G}, representing the expected increments of wealth shares up to a time scaling \eqref{eq: G2}. To be more precise, for large enough $N$ the process $Z^{(N)}$ is well approximated by the solution of the ODE
\begin{equation}\label{eq: ODE2}
        \frac{d}{dt}Z(t)=G(Z(t))\ln (1+\mu)\approx\mu G(Z(t))\quad\text{with } Z(0)=Z^{(N)}(0)\,.
\end{equation}
This is an efficient tool to make predictions for the future by solving (\ref{eq: ODE2}) with our simulation's result as initial condition, using e.g. Euler's method. Since the number of steps per year in the urn model increases exponentially with \eqref{eq: realtime} simulating the model is computationally much more demanding than simply solving (\ref{eq: ODE2}) numerically. 
Figure \ref{figure: G_Evolution} (a) compares these predictions to our simulations with very good agreement for large initial data. For small or moderate initial data fluctuations play a significant role in the stochastic evolution of the urn model. 
Of course, these predictions suppose that the dynamics of wealth remain unchanged in the future, which might not be the case due to the recent increase of interest rates. We will return to this issue in Section \ref{sec: discussion}.

\begin{figure}
  \centering
  \subfloat[][Simulation]{\includegraphics[width=0.5\linewidth]{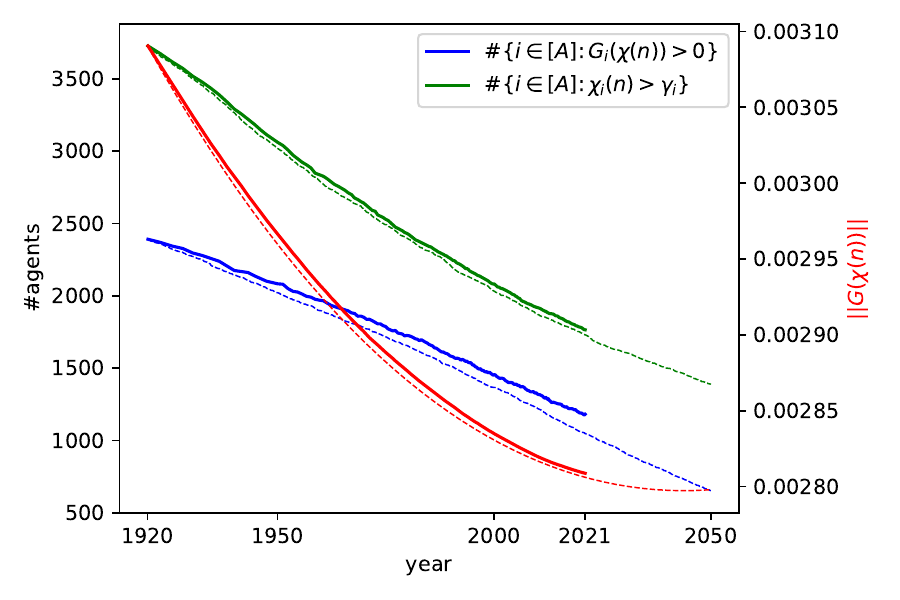}}%
  \subfloat[][Prediction]{\includegraphics[width=0.5\linewidth]{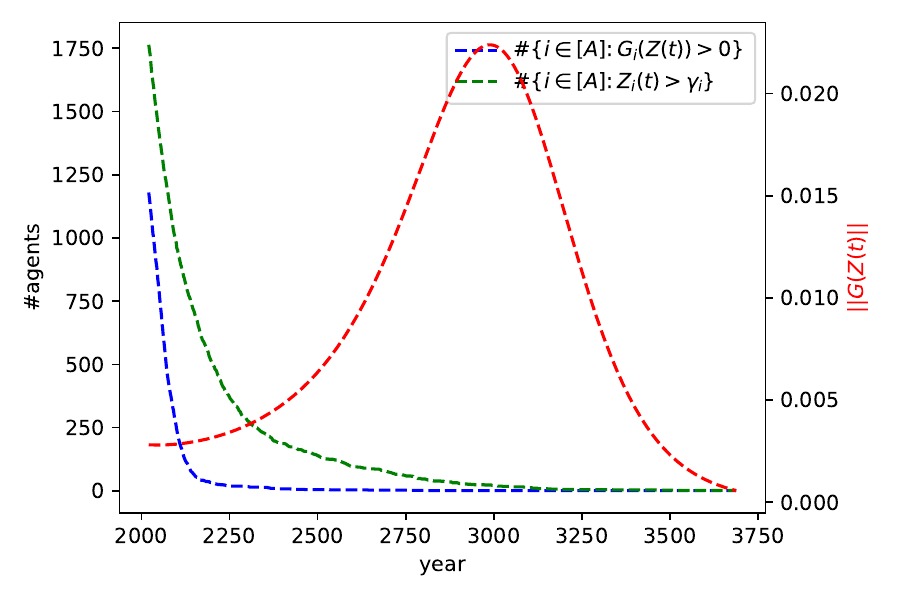}}%
  \caption{(a) shows the evolution of number of winners and agents with positive $G_i(\chi(n))$ as well as $\|G(\chi(n))\|$ in the simulation with $X(0)=(1,\ldots, 1)$. Years are assigned via (\ref{eq: realtime}) assuming a constant growth rate of $\mu=0.03$ per year. The dashed lines in (a) show numerical solutions of (\ref{eq: ODE2}) started from simulation data in 1920, which agree well with the time evolution of simulation data. In (b) solutions of  (\ref{eq: ODE2}) are shown starting from simulation data in 2021.}
  \label{figure: G_Evolution}
\end{figure}

\noindent Now, we consider the past and future time evolution of three indicators, presented in Figure \ref{figure: G_Evolution}:\\

The \textbf{number of agents with positive field $G_i$.} 
According to \eqref{eq: ODE2}, $G_i(\chi(n))$ is a good indicator for the short-term development of the share of agent $i\in[A]$, where positive (negative) values indicate an increasing (decreasing) share. $G_i(\chi(n))>0$ can occur in two different ways: either by large expected capital returns due to large $\chi_i(n)$ or by large wages compared to $\chi_i(n)$. After $n=280$ million steps (corresponding to the year $t=2021$), only  11.8\% of all agents have positive  $G_i(\chi (n))>0$. 48\% of the richest decentile  and even 97\% of the richest percentile belong to this group, whereas only 4\% of the poorer half of the population do so. Figure \ref{figure: XGplot} presents a detailed scatter plot of $G_i(\chi(n))$ and $\chi_i(n)$ for two different $n$.  As a consequence, most rich agents will increase their wealth share further, whereas the majority of the population loses. As visible in Figure \ref{figure: G_Evolution} (a), the number of agents with positive short-term trend has been decreasing in time in our simulation. Figure \ref{figure: G_Evolution} (b) presents a long time prediction for this indicator. The number of agents with positive trend will further decrease until only one agent is left, but this would take another 785 years.

The \textbf{number of winners}. 
In Section \ref{sec: model}, we referred to an agent $i\in[A]$ as winner if their wealth share exceeds the wage share, i.e. $\chi_i(n)>\gamma_i(n)$. In that sense, we can identify 18,87\% of agents as winners in our simulation in 2021. This group consists exclusively of agents belonging to the bottom 15\% or top 5\%. The high amount of poor agents in this group is due to the symmetric initial condition, where agents with low wage start with relatively large wealth share. As visible in Figure \ref{figure: G_Evolution}, the number of winners has been strongly decreasing in time and will further decrease until only one winner is left, but on an even longer time horizon than the previous indicator.

The \textbf{norm $\|G(\chi(n))\|$.} 
According to (\ref{eq: ODE2}), $\|G(\chi(n))\|$ can be considered as a measure for the local pace of expected change.  In Figure \ref{figure: G_Evolution}, we observe decreasing $\|G(\chi(n))\|$ in our simulation and we know from Theorem \ref{thm: intax} that the norm vanishes asymptotically as we approach a fixed point of $G$. Following our prediction, it will reach a local minimum in 20 years, followed by a strong increase, which will last over 1,000 years in theory. Finally, it converges exponentially towards zero. Recall that our reinforcement parameter $\beta=1.1$ was chosen such that $\|G(x_{ger})\|$ is small (c.f. Subsection \ref{subsec: beta}). In order to gain an intuition for this behaviour, we refer the reader back to the 3-agents case discussed in Figure \ref{figure: field} (a). Following a typical trajectory starting near the center of the simplex, it will first approach one of the unstable fixed points, before it finally turns towards a stable fixed point. On this trajectory, the number of winners and agents with $G_i>0$ is decreasing. Consequently, the observed current local minimum of $\|G(\chi(n))\|$ indicates that our simulated economy is currently close to an unstable fixed point. It may remain near this fixed point for some time, but the dynamics of our model will eventually accelerate and lead the economy into a monopoly-like state.

In this final point, the richest agent dominates the market with a share of 45.8\%. Figure \ref{figure: Simulations} (b) shows the corresponding \textbf{stable wealth distribution}, which is defined  analogously to (\ref{eq: SimulationCDF}) for any stable fixed point $x\in\Delta_{A-1}$ (with $228,000x_i$ in the place of $10 X_i(n))$. So even within our model, which does not take into account any future changes of parameters or the fundamental mechanism of the dynamics, it would take many centuries until such a monopoly-like state is attained.


\section{Unequal investment skills as an alternative explanation}\label{sec: alpha}

Throughout our considerations in Section \ref{sec: simulation}, we assumed that return rates on capital do only depend on wealth, but not on individual skills. Nevertheless, it is conceivable that unequal investment skills pose an alternative (or additional) explanation for the gap between wage and wealth distribution (empirically found in e.g. \cite{quadrini, CorrelationPaper}). This question can also be discussed within our extended Pólya urn model. For that, we set $F_i(k)=\alpha_i k^\beta$, where $\alpha_i>0$ regulates the investment skills of agent $i$. We keep the parameters $A=10,000,\, r=0.3$ and $\gamma$ as in Section \ref{sec: simulation} since they were derived without using the assumption of equal $\alpha_i$.

For specifying $\alpha_i$, we suppose that there is a positive correlation with wages. For simplicity we use the ansatz
$$\alpha_i=\gamma_i^c\quad\text{for some }c\ge0\,,$$
where $c$ regulates the intensity of correlation between wages and investment skills. In particular, $c=0$ corresponds to the the equal skill case from Section \ref{sec: simulation}, whereas large $c$ implies huge differences in investment skills. Note that the vector $(\alpha_1,\ldots, \alpha_A)$ does not need to be normalized since only ratios of $\alpha_i$ enter the dynamics \eqref{eq: transpr}.

In order to find an appropriate $\beta$ for this situation, we have another look at the $r-\beta$-line derived in Proposition \ref{prop: rbLine}, which gives the pairs $r, \beta$ minimizing $\|G(x_{ger})\|$ with a normalized sample $x_{ger}$ from $CDF_{ger}$ (\ref{eq: CDFger}). Hence, we choose again our parameters such that the empirical wealth distribution $CDF_{ger}$ in Germany for 2021 is as close to a stable distribution in our model as possible. Figure \ref{figure: Alpha} (a) shows this $r-\beta$-line for several choices of $c$. First, it is immediately noticeable that positive $r>0$ is optimal for $\beta=1$ and any $c>0$. To get an intuition on this, recall that for $\beta=1$, $r=0$ and $c>0$ our process reveals weak monopoly (see Appendix \ref{sec: r=0}). Since the real wealth distribution is of course more equal than weak monopoly, we need to choose a positive labor share. Second, the optimal labor share is increasing in $c$. This is due to the fact that larger $c$ increases inequality in our model, which can be compensated by larger $r$. Third, for fixed $r$ there is an inverse relation between $c$ and $\beta$, because they both increase inequality. Note that for large $c$ even $\beta<1$ is optimal, which corresponds to decreasing return rates. We first focus on a model with $c>0$ and $\beta =1$ considering only investment skills and no reinforcement, in order to highlight the conceptual differences to the situation examined in Section \ref{sec: simulation}. 
This case has been rigorously treated in Proposition \ref{prop: alpha}, where we proved that the shares $\chi(n)$ converge to a deterministic point for $\beta=1$ and $c>0$.

\begin{figure}
  \centering
  \subfloat[][]{\includegraphics[width=0.5\linewidth]{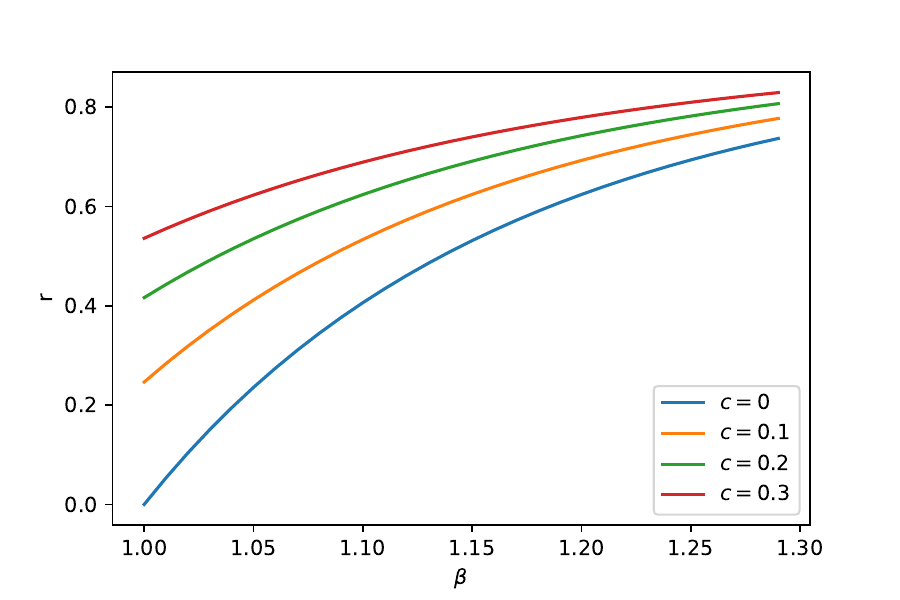}}%
  \subfloat[][]{\includegraphics[width=0.5\linewidth]{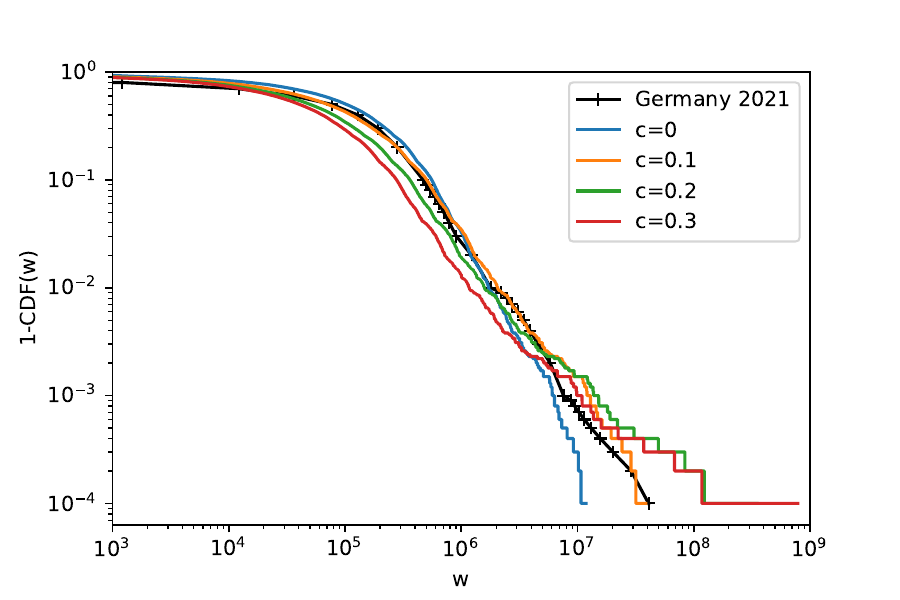}}%
  \caption{On the left, we see the minimizing $r-\beta$-line as derived in Proposition \ref{prop: rbLine} using data from Germany 2021 and sorted $\gamma$, but varying $c$. On the right, we see the stable wealth distributions obtained by Euler's method for \eqref{eq: ODE} with $\beta=1,\,r=0.3$ and different $c$. Note that the blue line coincides with the scaled wage distribution.}
  \label{figure: Alpha}
\end{figure}

 Figure \ref{figure: Alpha} (b) shows the unique stable distribution for different $c$. The model provides a quite good approximation of $CDF_{ger}$ for $c=0.1$, too. Larger $c$ implies an overestimation of the tail weight, which is consistent with the $r-\beta$-line. Nevertheless, there is a major difference compared to the results presented in Subsection \ref{subsec: sim}, where the wealth of the richest agent is much larger and more realistic. As visible in Figure \ref{figure: Simulations} (right), the model even predicts that the wealth of the richest will further increase in the future, which is not the case for $\beta =1$, since the distribution in Figure \ref{figure: Alpha} (b) is already stable. This can be underlined by a quick look at the wealth shares, as shown in the following table.

 \begin{center}
    \begin{tabular}{l|c c c c c}
         Share of richest & 50\% & 10\% & 1\% & 0.1\% & 0.01\%   \\ \hline
         Germany 2021 & 96.6\% & 58.9\% & 28.6\% & 14.3\% & 8.17\%\\
        $c=0.1$ & 95.4\% & 59.3\% & 25.7\% & 9.4\% & 1.7\%\\
        $c=0.17$ & 96.4\% & 68.0\% & 40.5\% & 24.3\% & 9.0\% \\
        $c=0.2$ & 96.7\% & 71.1\% & 46.5\% & 31.9\% & 15.3\%\\
        $c=0.3$ & 97.3\% & 77.2\% & 58.6\% & 48.0\% & 22.4\%
    \end{tabular}
\end{center}

Indeed, $c=0.1$ significantly underestimates the wealth of the richest. If we slightly increase $c$, such that the share of the richest $0.01\%$ coincides with our data, then our model significantly overestimates the $1\%$ and $10\%$ share. Thus, the model with $\beta =1$ cannot properly reproduce the empirical wealth distribution.

This observation is linked to a conceptual difference between the two models. Whereas the process reveals a random limit in the situation of Subsection \ref{subsec: sim} with $\beta >1$, there is a deterministic limit point for $\beta =1$, i.e. the long-time limit is fully determined by skills. Hence, under increasing returns with $\beta >1$ the long time limit is affected by the initial wealth of agents, whereas it is not for constant returns with different investment skills. Moreover, the rank correlation of wage and wealth will reach one after finitely many steps with probability one for $\beta=1, \,r>0$, but not necessarily for $\beta>1$.

\begin{figure}
  \centering
  \subfloat[][$r=0.3$, $\beta^\star=1.068$, $c^\star=0$,\\minimal value: $\|G(x_{ger})\|=0.0006$]{\includegraphics[width=0.5\linewidth]{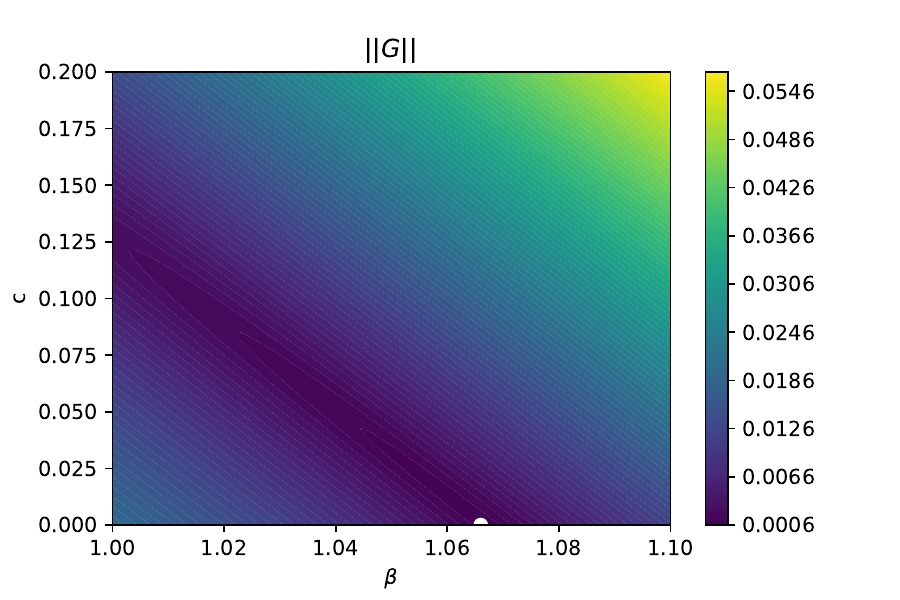}}%
  \subfloat[][$r=0.7$, $\beta^\star=1.147$, $c^\star=0.216$,\\ minimal value: $\|G(x_{ger})\|=0.002$]{\includegraphics[width=0.5\linewidth]{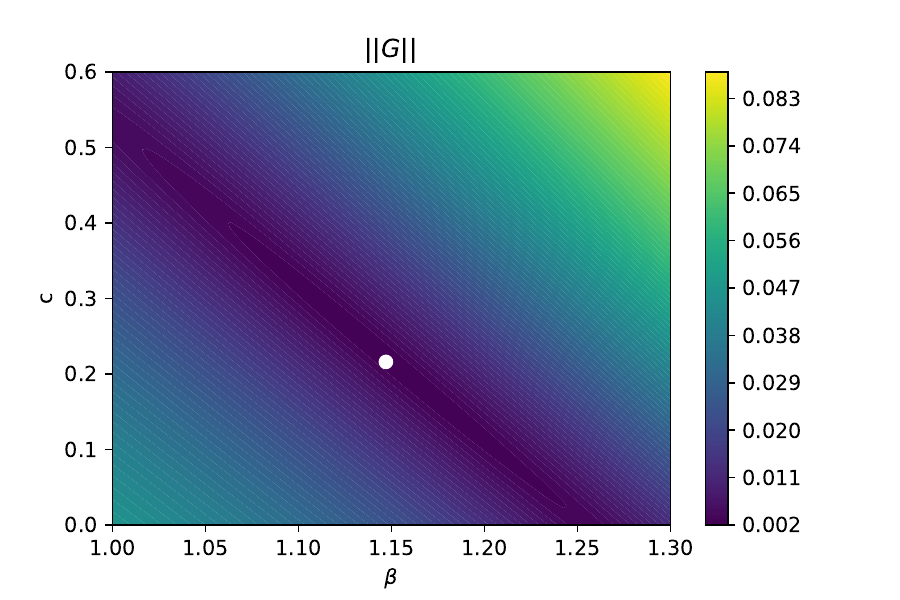}}%
  \caption{Contour-plot of $\|G(x_{ger})\|$ for varying values of $\beta$ and $c$, where $r=0.3$ resp. $r=0.5$ and $x_{ger}$ are fixed. The white bullet marks the global minimum $(\beta^\star,\,c^\star)$. Note the different scales in both plots.}
  \label{figure: Contour-r-c}
\end{figure}

It stands to reason that reality is a mixture of both, increasing returns and unequal skills. Figure \ref{figure: Contour-r-c} illustrates the goodness of fit for several choices of $c$ and $\beta$, again measured by $\|G(x_{ger})\|$ like in Subsection \ref{subsec: beta}. It underlines the reverse relation between $c$ and $\beta$, which means that in the optimum larger $\beta$ corresponds to smaller $c$ and vice versa. It turns out, that any positive $c$ provides no improvement with respect to this criterion for $r=0.3$. The reverse relation does also hold for larger $r$ (see  Figure \ref{figure: Contour-r-c} right) since the optimal $\beta$-$c$-line just shifts away from the origin. This is due to the fact that larger $c$ and $\beta$ increase inequality, but larger $r$ decreases inequality. Nevertheless, for larger $r$ one can achieve slight improvements by taking positive $c$, since too large $\beta$ leads to a overestimation of the wealth of the richest agent, in the sense that the losers basically only get their wage. But the total goodness is clearly worse for large $r$ than for our choice $r=0.3$, supporting that this is a reasonable value.

To summarize the findings in this section, we consider the parametrization of Subsection \ref{subsec: sim} as appropriate to model the evolution of the empirical wealth distribution. In particular, the assumption $c=0$ is well justified and we can ignore differences of investment skills within our model.

\section{Summary and Discussion}\label{sec: discussion}

\begin{figure}
  \centering
  \subfloat[][$r=0.3$]{\includegraphics[width=0.5\linewidth]{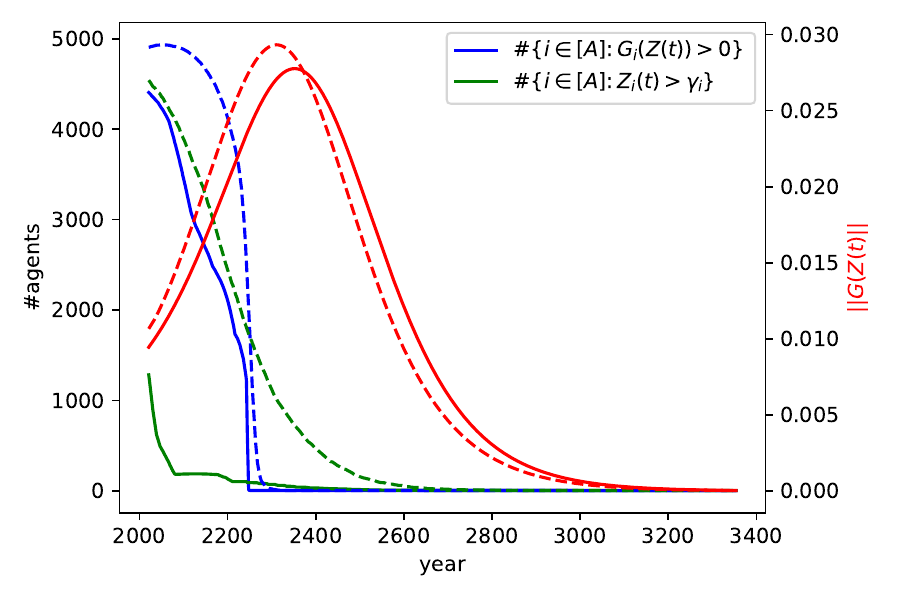}}%
  \subfloat[][$r=0.4$]{\includegraphics[width=0.5\linewidth]{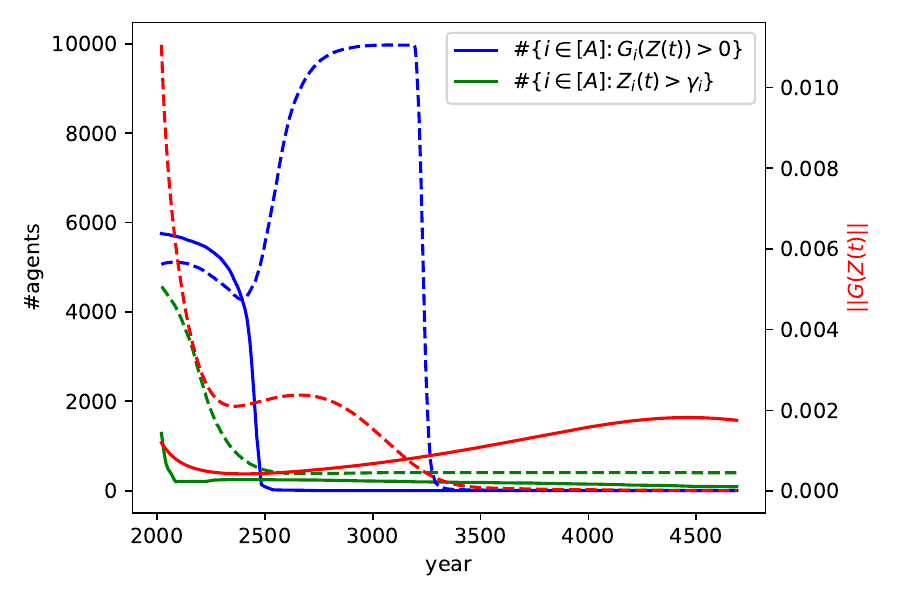}}\\
  \subfloat[][$r=0.5$]{\includegraphics[width=0.5\linewidth]{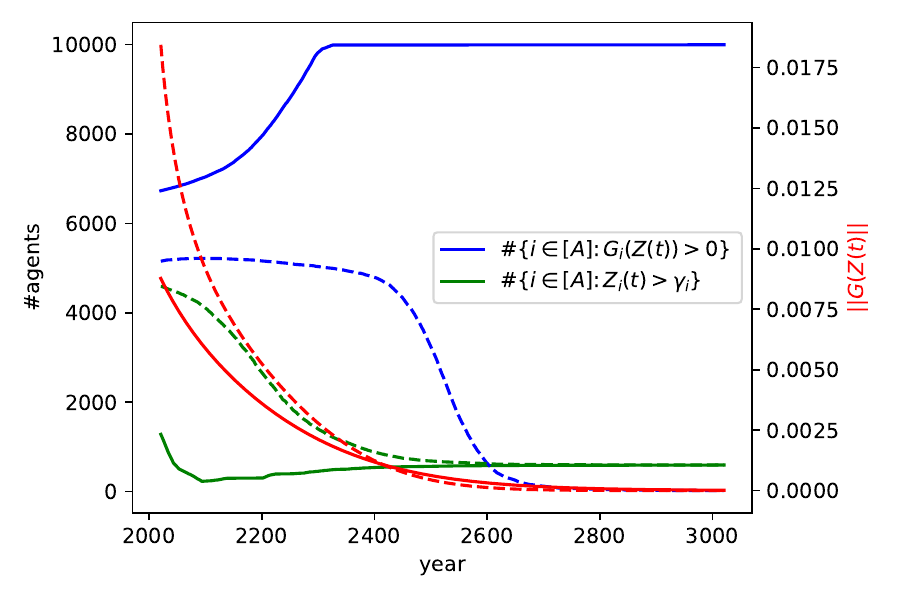}}%
  \caption{Number of winners (green), agents with positive $G_i(Z(t))$ (blue) and $\|G(Z(t))\|$ (red) in the solution of (\ref{eq: ODE2}) with initial condition $x_{ger}$ (after normalization) for different labour shares $r$. For the full (dashed) lines, wage and wealth was assigned fully correlated (uncorrelated). Note the different scales on the time axes.}
  \label{figure: EulerPredictions}
\end{figure}

In Section \ref{sec: model}, we discussed that our model basically exhibits three different regimes. First, for small $r<r_c'$ there is one random winner, who dominates the population on the long run. All agents have a positive probability of being that winner, which depends on the initial configuration and the wage distribution. Second, for large $r>r_c$, the process converges to a deterministic stable distribution, which is basically a distortion of the wage distribution towards more inequality (for $\beta>1$). Third, for moderate $r\in(r_c', r_c)$ there is still a random leading agent, but not all agents can be the leader depending on their wage. To check which regime holds for our choice of parameters we can again compute stable fixed points of the field $G$ by numerically solving \eqref{eq: ODE2} with different initial conditions. Stability of the generated fixed points was checked with the heuristics from Appendix \ref{sec: heuristics}. Assume w.l.o.g. $\gamma_1\le\gamma_2\le\ldots$, where $\gamma_1=0$ and $\gamma_A=0.0052$ in our case. Then the solution of \eqref{eq: ODE2} with initial condition $e^{(1)}$ converges towards a fixed point $(x_1,\ldots, x_A)\in\Delta_{A-1}$ with $x_1=0.451$ and $x_A=0.0042$. Hence, our process with $r=0.3$ seems to be in the first regime, where even agents with low wage can win the process when they start from a high wealth share. 
For $r=0.4$ the numerical solution finds monopoly fixed points for the richest agents but not for the poor, i.e. the it converges to a point $x\in\Delta_{A-1}$ with $x_1=0$ and $x_A=0.016$ when it starts in $e^{(1)}$. But with starting point $e^{(A-1)}$, it converges to a point with $x_{A-1}=0.183$ and $x_A=0.0074$, corresponding to the monopoly fixed point of agent $A-1$. Hence, the middle regime applies for $r=0.4$. Finally, for $r=0.5$, solutions of \eqref{eq: ODE2} converge to the same fixed point when starting in $e^{(1)}$ and $e^{(A)}$, such that the process is in the deterministic regime. In this unique stable fixed point $x\in\Delta_{A-1}$ we have $x_1=0$ and $x_A=0.0086$, which is consistent with Proposition \ref{prop:det}. In summary, we get the rough estimate
$$0.3<r_c'<0.4<r_c<0.5$$
for our situation. As a consequence, even moderately larger $r>0.3$ can lead to a different regime and long-time behaviour of our process. Recall that our choice $r=0.3$ is closely linked to a zero-interest economy (see Subsection \ref{subsec: r}). Higher interest rates might lead to a larger labor share $r$ and can therefore significantly change our predictions for the future evolution of wealth distribution. 

Analogously to Figure \ref{figure: G_Evolution} (b), Figure \ref{figure: EulerPredictions} presents the predictions of our model for different labor share $r$, again by solving (\ref{eq: ODE2}). The initial condition is the empirical wealth distribution $x_{ger}$ of 2021, assigned fully correlated with wages or uncorrelated. For the case $r=0.3<r_c'$ (Figure \ref{figure: EulerPredictions} (a)), we observe basically the same as in Figure \ref{figure: G_Evolution} (b), where we took the final state of our simulation as starting point. The time when the monopoly-like state is attained strongly depends on the wealth of the richest agent, which was underestimated in our simulation (c.f. Subsection \ref{subsec: sim}). The assignment of wealth and wage is not decisive for the future development as labor plays a minor role. As opposed to that, the predictions do significantly depend on the assignment of wealth and wage for $r=0.4\in(r_c', r_c)$ (Figure \ref{figure: EulerPredictions} (b)). For fully correlated assignment, we still observe the monopoly-like behavior with only one winning agent on the long run, but the dynamics are even slower than for $r=0.3$. But for initially uncorrelated wealth and wage, we still have several winners with $\chi_i(n)>\gamma_i$ on the long run.  Moreover, the number of winners and agents with positive $G_i$ is not monotone in time. This phenomenon does also occur in the $A=3$ case (see Figure \ref{figure: field} (c)), when we follow a trajectory starting near the corner of an agent with low wage. The huge number of agents with positive $G_i$ in the uncorrelated case is due to the effect, that the poorest agent starts with a positive share, but converges to zero (Proposition \ref{prop:det}). Consistent with our theoretical considerations, the limit point is independent of the assignment of wealth and wage for $r=0.5>r_c$, but the way towards this point varies. In the correlated case, the wealth of the richest agent is distributed among all others, which explains the large number of agents with positive $G_i$. For uncorrelated assignment of wage and wealth, the redistribution is more complex with decreasing number of short-term profiteers. As expected, for $r=0.4$ and $0.5$ the system converges to a state with more than one winner (green lines). Convergence is slowest in the intermediate regime where the structure of stable and unstable fixed points is most complex. In all three regimes, the final part of the dynamics is dominated by the fraction of the richest agent, which is the slowest variable in the system.


However, as discussed before, the actual stable points of the dynamics may not be reached in reality for various reasons. While the structure of stable limit distributions changes over time due to external influences that are not included in our model, the system evolves only slowly in a complex landscape with many fixed points with unstable directions. We have seen in Figures \ref{figure: G_Evolution} and \ref{figure: EulerPredictions} that it would take hundreds of years to reach the stable distribution with model parameters fitted from today's data.\\

In summary, the proposed model provides an accurate replication of the observed wealth distribution in Germany given the distribution of wages, widely independently of the presumed initial wealth distribution. In particular, the two tail structure from Figure \ref{figure: WID} is well reproduced. There is only some discrepancy concerning the wealth of the richest agent in our model, who represents the richest 0.01\% of real population. Since there is a huge variance within the wealth of this group we would have to simulate single households to properly represent this group. This would be computationally much more costly for a rather limited gain, and  we consider this only a minor disadvantage. Moreover, the observed wealth dynamics of recent decades is reflected in our simulation, where the wealth share of the richest percent of population grows slightly at the expense of the rest, even the "moderate" upper class. According to our model, this trend will continue in the future and less and less people will profit from increasing returns. The return rates on capital are also accurately modeled by the Pólya urn mechanism, implying that increasing returns do basically only affect the richest percentile (which eventually leads to the two-tailed structure of wealth distribution).

In order to understand the generic nature of wealth dynamics and avoid overfitting the model is kept intentionally very simple, which naturally leaves space for further refinements and research questions: 
A major intrinsic disadvantage of the proposed model is that it is strongly ordering, i.e. the rank correlation between wage and capital is close to one on the long run. This does not comply with empirical data. Including inhomogeneous investment skills (represented by the parameter $\alpha$) might pose a solution to this problem, if investment skills and wages are not chosen fully correlated. Due to a lack of useful data on the correlation structure of wealth and wage, we leave this issue open for future research. 
We concluded that unequal investment skills with constant return rates provides a less accurate explanation of empirical observations, but more refined research could be done here, including a more sophisticated model for the fitness of agents. 
Finally, our heuristics on the structure and number of stable and unstable fixed points of the driving field $G$ (Section \ref{sec: A=2}) for moderate $r$ could be completed by a rigorous treatment in the future.




\appendix

\section*{Appendix}

\section{The case $r=0$}\label{sec: r=0}

\cite{wir} and the references therein provide a comprehensive study of the wage-free model with $r=0$. The following Theorem sums up the main properties.

\begin{theorem}
    Let $r=0$ and $F_i(k)=\alpha_i k^\beta$ with $\alpha_i>0$ and $\beta\in\R$.
    \begin{enumerate}
        \item Let $\beta>1$. The process reveals strong monopoly in the sense that there is only one agent winning infinitely many steps, i.e.:
        $$\P\left(\exists i\in[A]\,\exists n_0\in\N\,\forall n\ge n_0\,\forall j\ne i\colon X_j(n)=X_j(n_0)\right)$$
        The probability of being the monopolist is positive for all agents and depends on $X(0)$ and $\alpha_i$.
        \item Let $\beta=1$ and $\alpha_1=\ldots=\alpha_A$. Then $\lim_{n\to\infty}\chi(n)$ exists almost surely and has a Dirichlet distribution with parameter $X(0)$.
        \item Let $\beta=1$ and $\alpha_i>\alpha_j$ for some $i\in[A]$ and all $j\ne i$. Then $\lim_{n\to\infty}\chi(n)=e^{(i)}$ almost surely.
        \item Let $\beta<1$. Then 
        $$\lim_{n\to\infty}\chi(n)=\left(\frac{\alpha_i^{\frac{1}{1-\beta}}}{\alpha_1^{\frac{1}{1-\beta}}+\alpha_A^{\frac{1}{1-\beta}}}\right)_{i\in[A]}\quad\text{almost surely.}$$
    \end{enumerate}
\end{theorem}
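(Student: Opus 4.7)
The theorem summarizes results for the wage-free urn that are developed in detail in \cite{wir} and in earlier works \cite{Oliveira, Zhu, Hill, Arthur2}; my plan is to outline the key argument for each regime and to treat the technical heart of the theorem, part 1, as a separate matter.

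For parts 2--4 I would reduce everything to the stochastic approximation framework already developed for Theorem \ref{thm: intax}. Setting $r=0$ the drift field becomes $G(x)=p(x)-x$ and the Lyapunov function $L(x)=-\log\bigl(\sum_i\alpha_i x_i^\beta\bigr)+\sum_i x_i$ from \eqref{eq: lyapunov} is still valid on $\Delta_{A-1}^o$. The interior fixed points satisfy $\alpha_i x_i^\beta=x_i\sum_j\alpha_j x_j^\beta$, which for $\beta\ne 1$ gives $x_i^{1-\beta}\propto\alpha_i$ and hence $x_i\propto\alpha_i^{1/(1-\beta)}$ after normalization. For part 4 ($\beta<1$), the computation in Proposition \ref{prop: alpha} carries over with $r=0$ to show strict convexity of $L$, so this is the unique stable fixed point and Theorem \ref{thm: intax} yields a.s. convergence to it. For part 3 ($\beta=1$ with strict maximum $\alpha_{i^*}$), a direct second-order analysis of $L$ at each corner shows that $e^{(i^*)}$ is the unique strict local minimum; all other corners and all higher-dimensional faces contain saddles or maxima of $L$, so Theorem \ref{thm: intax} gives $\chi(n)\to e^{(i^*)}$ a.s. For part 2 ($\beta=1$ with equal $\alpha_i$), the coordinates $\chi_i(n)$ are bounded martingales by the classical Pólya computation, so each converges almost surely; identifying the joint limit as Dirichlet with parameter $X(0)$ can be done either by computing mixed moments directly from the urn recursion and matching them to the Dirichlet moments, or by invoking exchangeability of the sequence of winners together with the de Finetti representation.

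Part 1 ($\beta>1$) is the main obstacle, because the Lyapunov approach is uninformative here: every corner of $\Delta_{A-1}$ is a stable fixed point of $G$, and Theorem \ref{thm: intax} alone would merely say $\chi(n)\to e^{(i)}$ for some random $i$, not that one agent eventually wins \emph{every} remaining step. To obtain the strong monopoly statement I would follow the comparison arguments of \cite{Oliveira, Hill, wir}. The key estimate is that for any pair $(i,j)$, once the ratio $X_i(n)/X_j(n)$ exceeds a threshold, the conditional probability that $j$ ever wins again is dominated by $\sum_{m\ge n}\alpha_j X_j(m)^\beta/(\alpha_i X_i(m)^\beta)$, which by a super-linear comparison recursion is almost surely finite for $\beta>1$ and in fact can be made arbitrarily small. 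A Borel--Cantelli argument then shows that with probability one only one agent wins infinitely many steps. Positivity of each agent's monopoly probability follows from the Markov property: for any designated agent $i$, with positive probability the first few steps put $i$ into the required lead, after which the preceding estimate confines the monopoly to $i$ with positive probability.
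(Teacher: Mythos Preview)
The paper does not actually prove this theorem: it appears in Appendix~A purely as a summary of results established in \cite{wir} (and in \cite{Oliveira, Zhu, Hill, Arthur2}), with no argument given. So there is no ``paper's own proof'' to compare your attempt against; you are in effect reconstructing what those references do.

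Your outline is sound and well matched to the tools the paper develops. Reducing parts~2--4 to the stochastic approximation framework of Theorem~\ref{thm: intax} and the Lyapunov function~\eqref{eq: lyapunov} is exactly the right idea, and your treatment of each case is correct in spirit: for $\beta=1$ with equal $\alpha_i$ you rightly bypass Theorem~\ref{thm: intax} (since $G\equiv 0$ there) and use the martingale property of $\chi_i(n)$ directly; for $\beta=1$ with a strict maximum you correctly identify $e^{(i^*)}$ as the unique stable fixed point via the linear structure of $\sum_i\alpha_i x_i$; and for $0<\beta<1$ the convexity computation from Proposition~\ref{prop: alpha} does carry over to $r=0$. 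One small caveat: for $\beta\le 0$ the Lyapunov function~\eqref{eq: lyapunov} is no longer bounded below on $\Delta_{A-1}^o$ (since $x_i^\beta\to\infty$ as $x_i\to 0$), so the supermartingale argument in Theorem~\ref{thm: intax} does not apply verbatim and a separate treatment is needed---this is handled in \cite{wir} but you should flag it.

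For part~1 your strategy (pairwise super-linear comparison plus Borel--Cantelli for strong monopoly, Markov property for positivity) is the standard route taken in \cite{Oliveira, wir}, and your observation that Theorem~\ref{thm: intax} alone only gives weak monopoly is exactly the point.
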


\cite{wir} also studies inhomogeneous feedback functions, revealing some additional features like non-convergence or random weak monopoly. Moreover in case 1., it is possible to predict the monopolist with high probability when the initial values are large, which forms the basis of our arguments using a numerical solution for the long-time evolution of \eqref{eq: ODE2}. Moreover, an important result from \cite{Oliveira, Zhu} is the occurrence of power-law distributions.

\begin{theorem}
    Let $A=2$, $r=0$ and $F_i(k)=k^\beta$ with $\beta>1$. Then the number of steps won by the loser has a power-law distribution, i.e.:
    $$\P\left(\min\{X_1(\infty), X_2(\infty)\}>n\right)=\Theta(n^{1-\beta})$$
\end{theorem}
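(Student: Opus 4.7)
The natural tool is the Athreya--Karlin exponential embedding. I would introduce two independent pure birth processes $(Y_1(t))_{t\ge 0}$ and $(Y_2(t))_{t\ge 0}$ with $Y_i(0)=X_i(0)$ and, while at state $k$, independent exponential holding times $\tau_i^{(k)}\sim\mathrm{Exp}(k^\beta)$. Denote by $\sigma_i^{(j)}\coloneqq\sum_{k=X_i(0)}^{X_i(0)+j-1}\tau_i^{(k)}$ the $j$-th jump time of $Y_i$ and by $T_i\coloneqq\sum_{k\ge X_i(0)}\tau_i^{(k)}$ the explosion time. Memorylessness of the exponentials ensures that listing the jumps of $Y_1\cup Y_2$ in chronological order reproduces the discrete-time Pólya urn of the theorem. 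Since $\beta>1$, $\sum_k k^{-\beta}<\infty$, so $T_i<\infty$ almost surely.

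On $\{T_1<T_2\}$ the process $Y_1$ performs infinitely many jumps inside $[0,T_1]$ while $Y_2$ performs only finitely many, so agent $1$ wins in the discrete urn and the loser's step count is $L_2=\max\{j\ge 1\colon\sigma_2^{(j)}<T_1\}$. Hence
\[
\{L_2>n\}\cap\{T_1<T_2\}=\{\sigma_2^{(n+1)}<T_1<T_2\}=\{0<T_1-\sigma_2^{(n+1)}<W_n\},
\]
with $W_n\coloneqq T_2-\sigma_2^{(n+1)}=\sum_{k\ge X_2(0)+n+1}\tau_2^{(k)}$. Adding the analogous contribution from $\{T_2<T_1\}$ and observing that $\min\{X_1(\infty),X_2(\infty)\}=X_i(0)+L_i$ for the loser $i$ (a harmless constant shift in $n$), the problem reduces to proving $\P(0<T_1-\sigma_2^{(n+1)}<W_n)=\Theta(n^{1-\beta})$.

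The key structural fact is that $T_1$, $\sigma_2^{(n+1)}$ and $W_n$ are mutually independent (the last two are sums over disjoint index ranges of $(\tau_2^{(k)})_k$). Conditioning on $(\sigma_2^{(n+1)},W_n)=(s,w)$ and writing $h$ for the density of $T_1$ gives
\[
\P\bigl(s<T_1<s+w\bigr)=\int_s^{s+w}h(t)\,dt.
\]
If $\|h\|_\infty<\infty$, this at once yields the upper bound $\|h\|_\infty\,\E[W_n]$. For the lower bound I would localise on $A_n=\{\sigma_2^{(n+1)}\in K,\,W_n\le\varepsilon\}$, where $K$ is a compact subinterval of $\mathrm{supp}(h)$ on which $h\ge c_0>0$: the integral above is then at least $c_0 w$ on $A_n$, and using independence of $W_n$ from $\sigma_2^{(n+1)}$, the almost sure convergence $\sigma_2^{(n+1)}\to T_2$, and the second-moment bound $\E[W_n^2]=O(n^{2-2\beta})$, one obtains $\E[W_n\mathds{1}_{A_n}]=\Omega(\E[W_n])$. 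The elementary Riemann-sum estimate $\E[W_n]=\sum_{k\ge X_2(0)+n+1}k^{-\beta}=n^{1-\beta}/(\beta-1)+o(n^{1-\beta})$ then delivers matching upper and lower bounds of order $n^{1-\beta}$.

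The principal technical hurdle is the two density estimates on $h$: boundedness, and a uniform positive lower bound on some compact subinterval of its support. Boundedness can be extracted from sufficient decay of the Laplace transform $\E[e^{-sT_1}]=\prod_{k\ge X_1(0)}k^\beta/(k^\beta+s)$, and pointwise positivity on the interior of the support follows by combining positivity of the density of any finite partial sum $\sigma_1^{(N)}$ with a tail bound on $T_1-\sigma_1^{(N)}$ for $N$ large. These ingredients are routine but tedious, and are presumably the substance of \cite{Oliveira, Zhu}; once they are in hand, the embedding above concentrates the essential probabilistic content into the single clean asymptotic $\E[W_n]\sim n^{1-\beta}/(\beta-1)$.
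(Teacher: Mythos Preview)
The paper does not give a proof of this theorem: it is stated in the appendix as a known result attributed to \cite{Oliveira, Zhu}, with no argument supplied. There is therefore no in-paper proof to compare against.

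Your sketch is correct and is essentially the approach taken in those references. The Rubin/Athreya--Karlin exponential embedding is the standard device for this question, and your reduction to the event $\{0<T_1-\sigma_2^{(n+1)}<W_n\}$ together with the mutual independence of $T_1$, $\sigma_2^{(n+1)}$ and $W_n$ is exactly the structure Oliveira exploits. The two density facts you flag as the ``principal technical hurdle'' are in fact cheap: writing $T_1=\tau_1^{(X_1(0))}+R$ with $R\ge 0$ independent of the first (exponential) summand shows immediately that $T_1$ has a density bounded by $X_1(0)^\beta$, and positivity of $h$ on all of $(0,\infty)$ follows since $\P(R<t)>0$ for every $t>0$; continuity of $h$ (for the uniform lower bound on a compact interval) comes from the same decomposition by dominated convergence. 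Your localisation argument for the lower bound via the event $A_n$, combined with $\sigma_2^{(n+1)}\to T_2$ in distribution and the second-moment bound on $W_n$, is sound.
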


The result can be extended to more general feedback functions, but an extension to larger systems $A>2$ is still left to show to our knowledge.

\section{Heuristics on the stability of fixed points}\label{sec: heuristics}

In Subsection \ref{subsec: sim}, we faced the challenge to decide whether a given fixed point of the field $G$ is stable. Formally, we would have to check negative definiteness of the Hessian of the Lyapunov function (\ref{eq: lyapunov}) using e.g. Lanczos' algorithm. Since this is numerically difficult for large $A$, we are content with some heuristics. These arguments will also grant some further insight into the possible positions of stable fixed points. We recall the definition of the field $G$ in \eqref{eq: G}, \eqref{eq: gg} with feedback $F_i(k)=k^\beta$.

A fixed point $x\in\Delta_{A-1}$ of $G$ is stable if any infinitesimal exchange of mass between two agents has an inverse effect on $G$, i.e. 
\begin{equation}\label{eq: stability}
    \frac{\partial G_i(x)}{\partial x_i}- \frac{\partial G_i(x)}{\partial x_j}<0\quad\text{for all }j\ne i\,.
\end{equation}
If $\frac{\partial G_i(x)}{\partial x_i}- \frac{\partial G_i(x)}{\partial x_j}>0$ holds for one pair $i\ne j$, then any increase of $x_i$ at the expense of agent $j$ would even be reinforced by the field $G$, such that $x$ is unstable. The partial derivatives can easily be computed:

\begin{align*}
    \frac{\partial G_i(x)}{\partial x_i}&=(1-r)\frac{\beta x_i^{\beta-1}\sum_k x_k^\beta-\beta x_i^\beta x_i^{\beta-1}}{\left(\sum_k x_k^\beta\right)^2}-1=(1-r)\frac{\beta}{x_i}\left(p_i(x)-p_i(x)^2\right)-1\\
\end{align*}

And for $j\ne i$:
\begin{align*}
    \frac{\partial G_i(x)}{\partial x_j}=-(1-r)\frac{x_i^\beta}{\left(\sum_k x_k^\beta\right)^2}x_j^{\beta-1}\beta=-(1-r)\frac{\beta}{x_i}p_i(x)^2\left(\frac{x_j}{x_i}\right)^{\beta-1}
\end{align*}

Thus:
\begin{align*}
  \frac{\partial G_i(x)}{\partial x_i}- \frac{\partial G_i(x)}{\partial x_j}&=(1-r)\frac{\beta}{x_i}\left(p_i(x)+p_i(x)^2\left(\left(\frac{x_j}{x_i}\right)^{\beta-1}-1\right)\right)-1\\
    &=(1-r)\beta\frac{p_i(x)}{x_i}\left(1+p_i(x)\left(\left(\frac{x_j}{x_i}\right)^{\beta-1}-1\right)\right)-1
\end{align*}

First, note that this condition does not depend on $\gamma$, but the position of fixed points does so of course. Moreover, it suffices to only take the richest agent for $j$ in  (\ref{eq: stability}) since $\frac{\partial G_i(x)}{\partial x_j}$ is monotone in $x_j$. Hence, we only have to check $A-1$ inequalities, such that the criterion is numerically fast.

\begin{figure}
  \centering
  \subfloat[][$r=0.4$, $\beta=2$]{\includegraphics[width=0.25\linewidth]{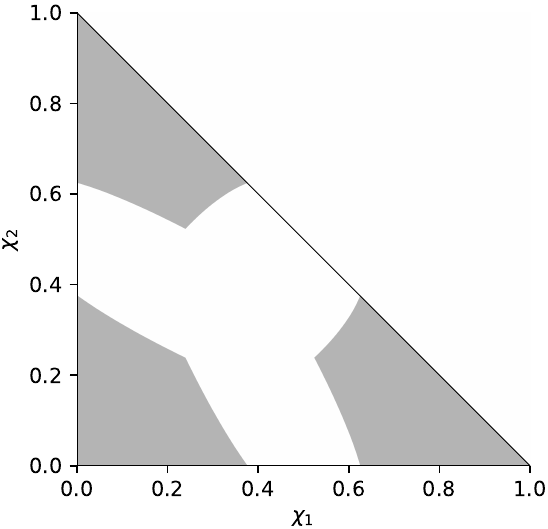}}%
  \subfloat[][$r=0.51$, $\beta=2$]{\includegraphics[width=0.25\linewidth]{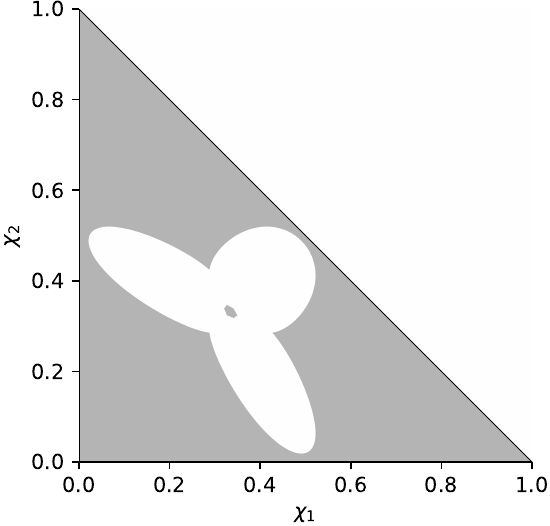}}
  \subfloat[][$r\geq 0.6$, $\beta=2$]{\includegraphics[width=0.25\linewidth]{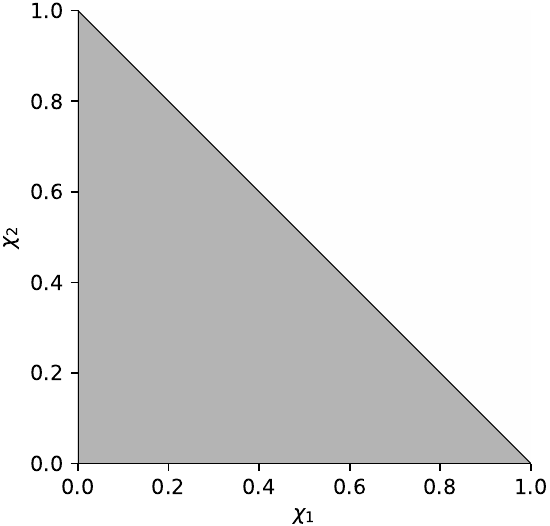}}%
  \subfloat[][$r=0.1$, $\beta=0.5$]{\includegraphics[width=0.25\linewidth]{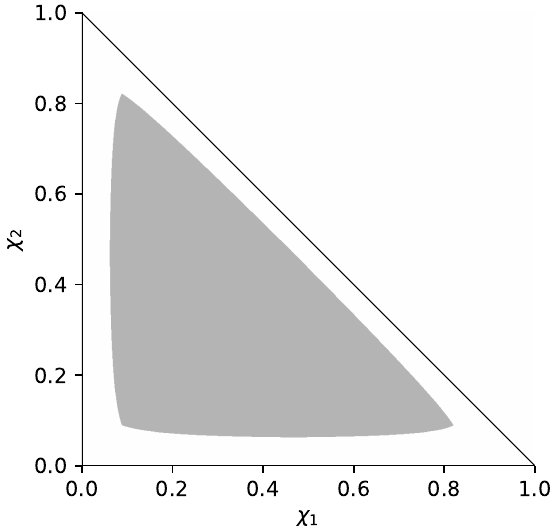}}%
  \caption{The set P for $A=3$ and different $r, \beta$.}
  \label{figure: setP}
\end{figure}

If $\beta>1$, then obviously (\ref{eq: stability}) holds for $x=e^{(k)}$ for all $k\in[A]$. Since (\ref{eq: stability}) does continuously depend on $x$, all fixed points, which are close to a corner of the simplex, are stable. On the other hand, the point $x=\left(\frac{1}{\# S}\mathds{1}_{i\in S}\right)_{i\in[A]}$, where the total wealth is shared among agents $S\subset[A]$ with $\# S>1$, fulfills (\ref{eq: stability}) only if 
$$r>\frac{\beta-1}{\beta}\,.$$
Hence, the set $P\coloneqq\{x\in\Delta_{A-1}\colon (\ref{eq: stability})\text{ holds}\}$ provides the region in $\Delta_{A-1}$ where stable fixed points can exist. It is increasing in $r$, does always contain the corners of the simplex and contains the middle point of the simplex for $r>\frac{\beta-1}{\beta}$. Figure \ref{figure: setP} illustrates this set $P$. Therefore it is plausible that for symmetric wage $\gamma=\left(\frac1A,\ldots,\frac1A\right)$ the critical labor share is  $r_c=\frac{\beta-1}{\beta}$ which is consistent with our considerations of the $A=2$ case in Section \ref{sec: A=2}. Remarkably, this expression does not depend on $A$.

For $\beta=1$, the non-linear part $G_0$ \eqref{eq: fieldG0} vanishes and the total field is $G_i (x)=r(\gamma_i -x_i )$. Therefore we have $\frac{\partial G_i(x)}{\partial x_i}- \frac{\partial G_i(x)}{\partial x_j}=-r\leq 0$, the region $P=\Delta_{A-1}$ and the unique fixed point $x=\gamma$ is stable. 
For $\beta<1$, the symmetric point $x=\left(\frac1A,\ldots,\frac1A\right)_{i\in[A]}$ fulfills (\ref{eq: stability}) for all $r\ge0$, but $P$ does not contain any boundary point of the simplex $\Delta_{A-1}$ (Figure \ref{figure: setP} (d)). Thus, any stable point must be located in the interior of the simplex.

\begin{figure}
  \centering
  \subfloat[][$\beta=2,\, x_3+x_4=0.2$]{\includegraphics[width=0.33\linewidth]{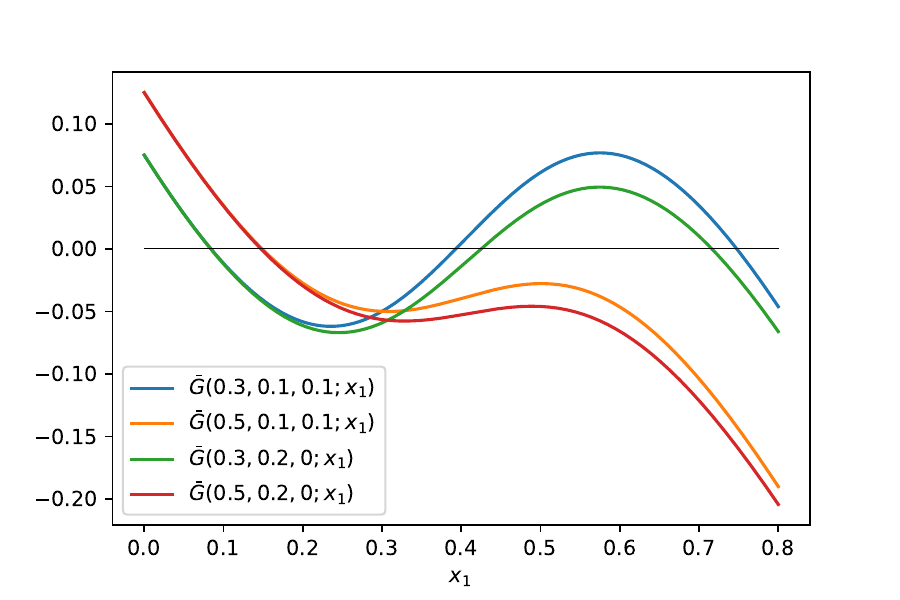}}%
  \subfloat[][$\beta=2,\, x_3+x_4=0.6$]{\includegraphics[width=0.33\linewidth]{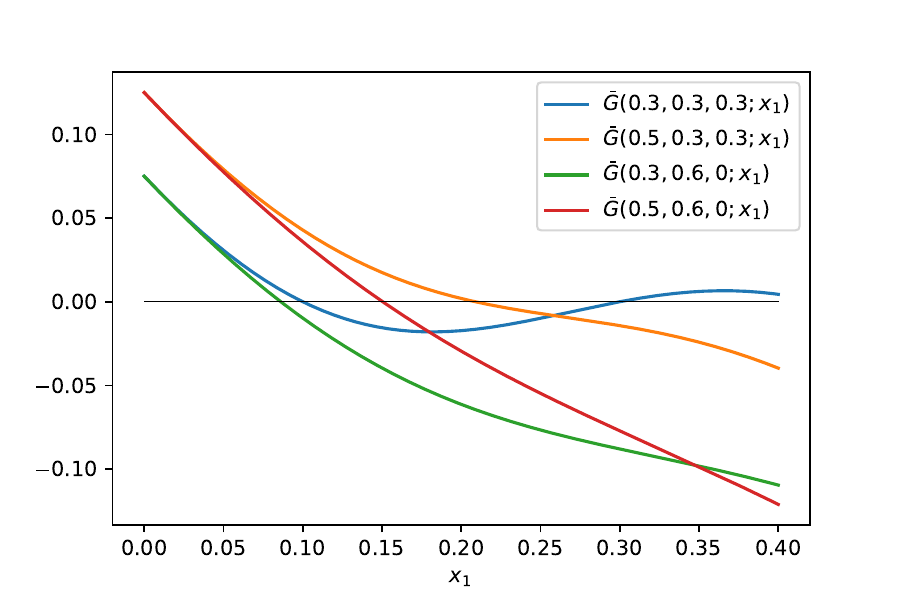}}
  \subfloat[][$\beta=0.5,\, x_3+x_4=0.4$]{\includegraphics[width=0.33\linewidth]{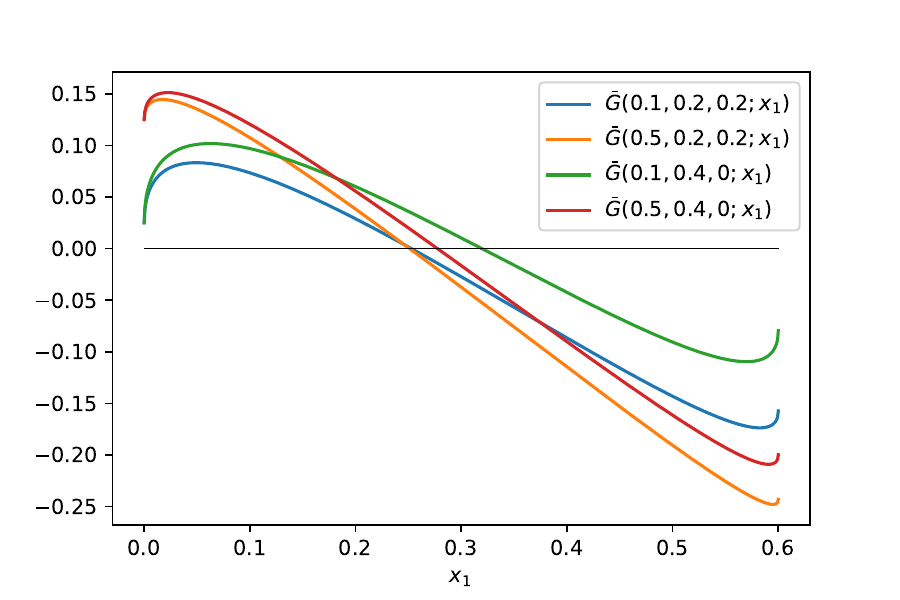}}%
  \caption{The mapping $\bar G(r, x_3, x_4; x_1)\coloneqq (1-r)\frac{x_1^\beta}{x_1^\beta+(1-x_1-x_3-x_4)^\beta+x_3^\beta+x_4}+0.25 r-x_1$ for various parameterizations.}
  \label{figure: Gheuristic}
\end{figure}

Finally, we add another heuristic to explain, why stable fixed points are either close to the vertices of the simplex or there is only one stable fixed point. For that, we take two agents $i\ne j$ and fix the shares of the others. Define $c\coloneqq\sum_{k\in[A]\atop k\ne i,\,k\ne j}x_k$ and $d\coloneqq\sum_{k\in[A]\atop k\ne i,\,k\ne j}x_k^\beta$. Then we consider the field $G$ while distributing the remaining $1-c$ share:
\begin{equation*}
    x_i\mapsto (1-r)\frac{x_i^\beta}{x_i^\beta+(1-c-x_i)^\beta+d}+r\gamma_i-x_i
\end{equation*}
Via plotting\ (Figure \ref{figure: Gheuristic}), one can easily see that for $\beta\le1$ or $c, r$ large enough,  this function has only one fixed point. Otherwise, there are three fixed points, but the middle one is unstable. Hence, stable fixed points are either unique and in the middle of the simplex ($r>r'_c$), or random and close to the corners of the simplex ($r<r'_c$). In the latter case, there are unstable fixed points and saddle points away from the corners.

\section{A functional law of large numbers for the dynamic}\label{sec: llnTax}

In complete analogy to the $r=0$ case in \cite{wir}, it is possible to describe the process in the limit for large initial values via a functional law of large numbers (LLN), where the limiting process is deterministic and given by the solution of an ordinary differential equation (ODE).

The key for the proof will be once again the Doob decomposition (\ref{eq: Doob}), where we can find separate limits for the predictable part $H$ and the martingale part $M$. In order to emphasize the dependence of the initial market size, we will write $\chi^{(N)}=\chi,\,X^{(N)}=X$ and $H^{(N)}\coloneqq H,\, M^{(N)}\coloneqq M$ in the following, where $N\coloneqq X_1^{(N)}(0)+\ldots+X_A{(N)}(0)$. We keep the initial wealth distribution $\chi(0)\in\Delta_{A-1}$ fixed. For simplicity, we assume homogeneous feedback $F_i(k)=\alpha_ik^\beta, \alpha_i>0, \beta\in\R$ for this section, such that $G(n, x)=G(x)$ and $p(n, x)=p(x)$ do not depend on the market size $n$. Assuming sufficiently fast uniform convergence of $G(n, (\cdot))$ towards a field $G$, the results can be extended to non-homogeneous feedback like in \cite{wir}. We now formulate our LLN.

\begin{theorem}\label{thm: lln_tax}
    Denote by $\tilde Z\colon[0, \infty)\to\Delta_{A-1}$ the solution of the ODE
    \begin{equation}\label{eq: ODE}
        \frac{d}{dt}\tilde Z(t)=\frac{G(Z(t))}{1+t}\quad\text{with}\quad \tilde Z(0)=\chi(0)
    \end{equation}
    and define the sequence of processes in $\Delta_{A-1}$
    $$\left(\tilde Z^{(N)}\right)_N\coloneqq\left(\tilde Z^{(N)}(t)\colon t\ge0\right)_N\coloneqq\left(\chi^{(N)}(\lfloor Nt\rfloor)\colon t\ge0\right)_N\,.$$
    Then: $\tilde Z^{(N)}$ converges to $\tilde Z$ weakly on the Skorochod space $\mathbb{D}([0, \infty)m, \Delta_{A-1})$.
\end{theorem}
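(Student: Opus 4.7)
The plan is to exploit the Doob decomposition $\chi(n)=\chi(0)+H(n)+M(n)$ from \eqref{eq: Doob}, rescale time to $t=n/N$, and show that on any bounded interval $[0,T]$ the predictable part $H$ converges to the integral form of \eqref{eq: ODE} while the martingale $M$ vanishes. Since $\tilde Z$ is deterministic and continuous, uniform-on-compacts convergence in probability of $\tilde Z^{(N)}$ to $\tilde Z$ is sufficient: it implies weak convergence in $\mathbb{D}([0,T],\Delta_{A-1})$ for each $T>0$, and patching these together gives the claim on $\mathbb{D}([0,\infty),\Delta_{A-1})$.

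First I would handle the martingale part. Setting $M_N(t):=M(\lfloor Nt\rfloor)$, the differences $\xi(k)$ are centered and bounded by a universal constant $C$, so the increments $\xi(k)/(N+k+1)$ of $M$ are of order $C/(N+k)$. By orthogonality of martingale differences,
$$\mathbb{E}\bigl[M_N(T)^2\bigr]\,\le\,C^2\sum_{k=0}^{\lfloor NT\rfloor-1}\frac{1}{(N+k+1)^2}\,=\,O(1/N),$$
and Doob's $L^2$-maximal inequality then yields $\mathbb{E}\bigl[\sup_{t\le T}|M_N(t)|^2\bigr]=O(1/N)$, so $\sup_{t\le T}|M_N(t)|\to 0$ in probability.

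Next I would compare the predictable part to the ODE. A Riemann-sum estimate based on the identity $\int_{k/N}^{(k+1)/N}(1+s)^{-1}\,ds=\frac{1}{N+k+1}+O((N+k)^{-2})$, together with boundedness of $G$ on $\Delta_{A-1}$, gives
$$H_N(t)\,:=\,\sum_{k=0}^{\lfloor Nt\rfloor-1}\frac{G(\chi(k))}{N+k+1}\,=\,\int_0^t\frac{G(\tilde Z^{(N)}(s))}{1+s}\,ds+\epsilon_N(t)$$
with $\sup_{t\le T}|\epsilon_N(t)|=O(1/N)$. Subtracting the integrated form of \eqref{eq: ODE} and using that $G$ is Lipschitz on $\Delta_{A-1}$ with some constant $L$ (valid for $\beta\ge 1$), the deviation $\Delta_N(t):=|\tilde Z^{(N)}(t)-\tilde Z(t)|$ satisfies
$$\Delta_N(t)\,\le\,L\int_0^t\frac{\Delta_N(s)}{1+s}\,ds+|\epsilon_N(t)|+|M_N(t)|,$$
and Gronwall's lemma with the integrable weight $1/(1+s)$ yields
$$\sup_{t\le T}\Delta_N(t)\,\le\,(1+T)^L\,\sup_{t\le T}\bigl(|\epsilon_N(t)|+|M_N(t)|\bigr),$$
which tends to $0$ in probability by the previous step.

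The hard part will be the regularity of $G$ near the boundary of the simplex when $\beta<1$: the partial derivatives of $x_i^\beta$ blow up as $x_i\to 0$, so $G$ is only locally Lipschitz on $\Delta_{A-1}^o$ and a global constant $L$ is unavailable. I would remedy this by a localization argument: for an interior initial condition $\chi(0)\in\Delta_{A-1}^o$, pick a compact $K\subset\Delta_{A-1}^o$ containing both $\chi(0)$ and the continuous ODE trajectory on $[0,T]$; run the preceding argument until the first exit time of $\tilde Z^{(N)}$ from $K$; and show via the same Gronwall bound that this exit time exceeds $T$ with probability tending to $1$ as $N\to\infty$, so that enlarging $K$ recovers the full claim. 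For $r=0$ with $\beta\ne 1$ some limiting trajectories do reach the boundary (cf.\ Appendix \ref{sec: r=0}), but there the LLN can be read off from the more detailed analysis in \cite{wir} in analogous fashion.
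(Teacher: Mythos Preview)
Your argument is correct and shares the same skeleton as the paper's: the Doob decomposition \eqref{eq: Doob}, the vanishing of the martingale part via boundedness and orthogonality of the $\xi(k)$ together with Doob's maximal inequality, and a Riemann-sum identification of the predictable part with the integral form of \eqref{eq: ODE}. The difference lies only in how the argument is closed. The paper first establishes tightness of $(\tilde Z^{(N)})_N$ from the elementary modulus-of-continuity bound $\|\tilde Z^{(N)}(t)-\tilde Z^{(N)}(s)\|\le (N|t-s|+1)/N$, and then identifies every subsequential weak limit as a solution of the ODE, invoking local Lipschitz continuity of $G$ only for uniqueness. You instead bypass tightness entirely and obtain uniform-on-compacts convergence in probability directly via Gronwall, which is more quantitative but needs an explicit Lipschitz constant for $G$; this is why you have to add the localization step for $\beta<1$, whereas the paper's route handles all $\beta$ uniformly once ODE uniqueness is in place. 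Both routes are standard and essentially equivalent in strength here.
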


Note that $G$ is locally Lipschitz, such that $\tilde Z$ is unique and well-defined. An important implication of our LLN is the following: It is possible that $\tilde Z$ does converge towards a saddle point of $G$, although our process $\chi^{(N)}$ cannot converge to saddle points due to noise. Hence, the process $\chi^{(N)}$ can be stuck in saddle points for quite a long time, i.e. it takes more than $O(N)$ time to escape from a saddle point.

The proof for the $r=0$ case in \cite{wir} can be transferred one-to-one to $r>0$, since all crucial properties do also hold in the general case. These are in particular:
\begin{enumerate}
    \item $\|\tilde Z^{(N)}(t)-\tilde Z^{(N)}(s)\|\le\frac{N|t-s|+1}{N}$ holds for $t, s\ge0$ implying tightness of the sequence $\tilde Z^{(N)}$.
    \item The the increments $\xi^{(N)}_i(n)$ are centered and uniformly bounded. Moreover, $\xi^{(N)}_i(n)$ and $\xi^{(N)}_j(m)$ are uncorrelated for $n\ne m$. Hence, Doob's inequality yields that $\left(M^{(N)}(\lfloor Nt\rfloor)\colon t\ge0\right)$ converges to zero for $N\to\infty$ (weakly on $\mathbb{D}([0, \infty), \Delta_{A-1})$).
    \item Riemann approximation of the integral yields
    $$H^{(N)}(\lfloor Nt\rfloor)=\sum_{k=0}^{\lfloor Nt\rfloor-1}\frac1N\cdot\frac{G\left(\chi^{(N)}(N\cdot\frac k N)\right)}{1+\frac k N + \frac 1N}\xrightarrow{N\to\infty}\int_0^t\frac{G(\tilde Z(t))}{1+u}du=\tilde Z(t)-\tilde Z(0),$$
    which completes the proof.
\end{enumerate}

Thus, the process behaves almost deterministic for large initial values. 

\section{Supplementary figures}

\begin{figure}[ht]
  \centering
  \subfloat{\includegraphics[width=0.8\linewidth]{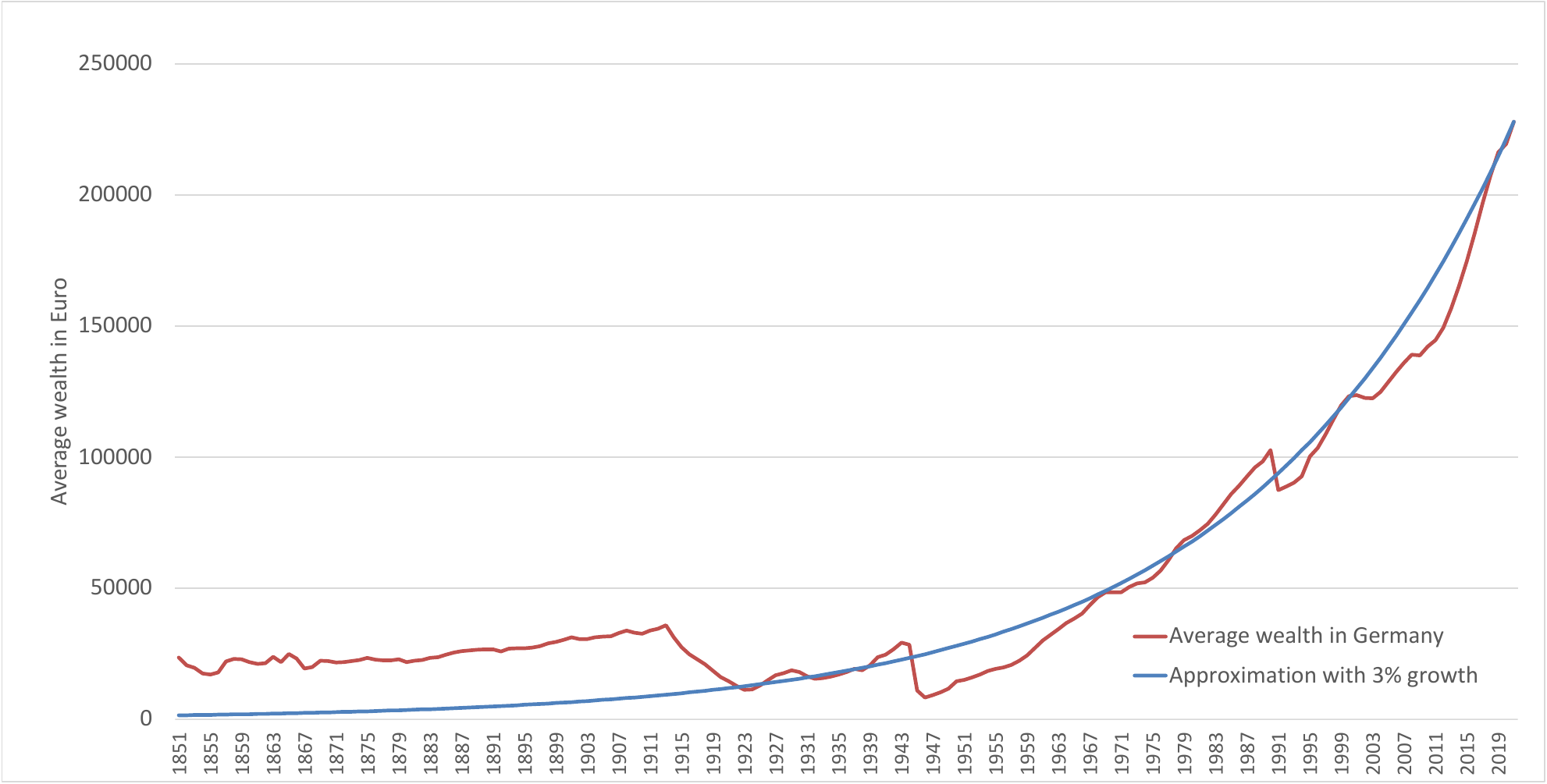}}%
  \caption{Timeline of average personal wealth in Germany according to \cite{wid} compared to constant growth at rate 3\%. This justifies the parameter choice $\mu=0.03$ in the time scaling \eqref{eq: realtime} in Section \ref{sec: future}.}
  \label{figure: wealth_timeline}
\end{figure}

\begin{figure}[ht]
  \centering
  \subfloat[]{\includegraphics[width=0.5\linewidth]{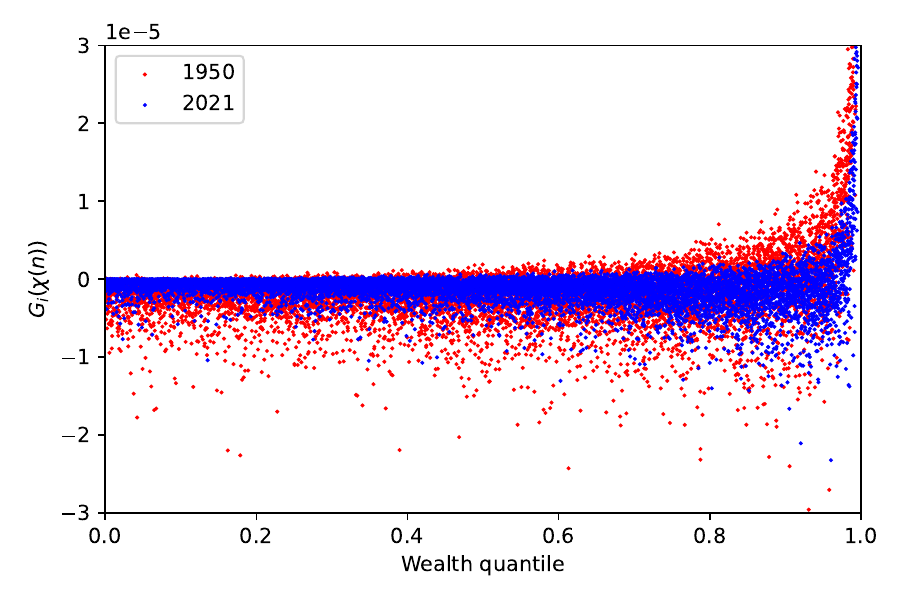}}%
  \subfloat[]{\includegraphics[width=0.5\linewidth]{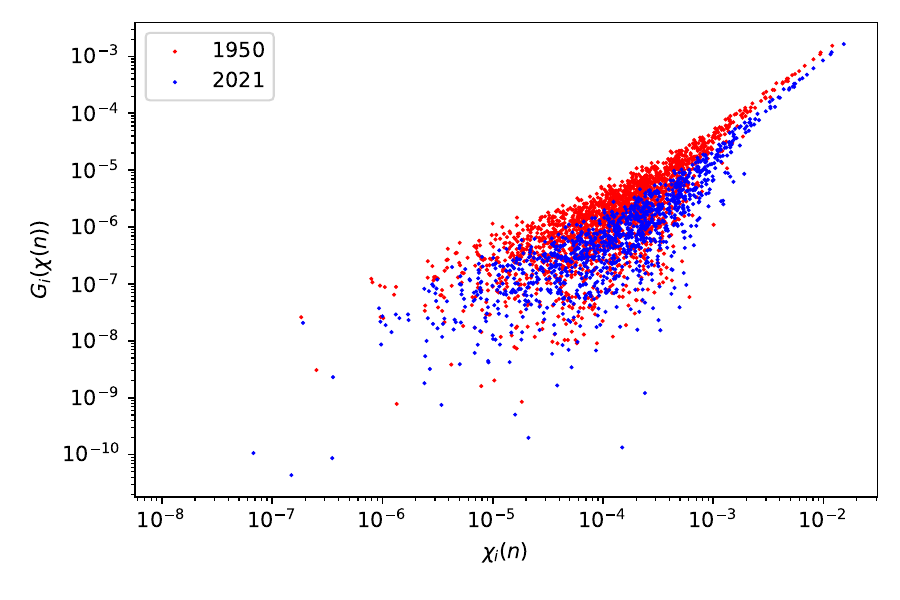}}%
  \caption{Scatter plot of $G_i(\chi(n))$ against wealth quantiles (a) and against $\chi_i(n)$ for positive $G_i$ only (b), for the years 1950 and 2021 in our simulation with symmetric initial condition. Years were assigned via (\ref{eq: realtime}) assuming a constant growth rate of $\mu=0.03$ per year. The entries of the field $G$ slowly tend to zero asymptotically and this happens faster for poorer agents (cf.\ Section \ref{sec: future}).}
  \label{figure: XGplot}
\end{figure}


\begin{thebibliography}{41}
\providecommand{\natexlab}[1]{#1}
\providecommand{\url}[1]{\texttt{#1}}
\expandafter\ifx\csname urlstyle\endcsname\relax
  \providecommand{\doi}[1]{doi: #1}\else
  \providecommand{\doi}{doi: \begingroup \urlstyle{rm}\Url}\fi

\bibitem[Arthur et~al.(1994)]{Arthur}
W~Brian Arthur et~al.
\newblock \emph{Increasing returns and path dependence in the economy}.
\newblock University of michigan Press, 1994.

\bibitem[Bach et~al.(2015)Bach, Calvet, and Sodini]{bach}
Laurent Bach, Laurent~E Calvet, and Paolo Sodini.
\newblock {Rich pickings? Risk, return, and skill in the portfolios of the
  wealthy}.
\newblock \emph{HEC Research Papers Series 1126}, 2015.

\bibitem[Bena{\"\i}m et~al.(2015)Bena{\"\i}m, Benjamini, Chen, and
  Lima]{Benaim}
Michel Bena{\"\i}m, Itai Benjamini, Jun Chen, and Yuri Lima.
\newblock {A generalized P{\'o}lya's urn with graph based interactions}.
\newblock \emph{Random Structures \& Algorithms}, 46\penalty0 (4):\penalty0
  614--634, 2015.

\bibitem[Benhabib and Bisin(2018)]{benhabib}
Jess Benhabib and Alberto Bisin.
\newblock Skewed wealth distributions: Theory and empirics.
\newblock \emph{Journal of Economic Literature}, 56\penalty0 (4):\penalty0
  1261--91, 2018.

\bibitem[Benhabib et~al.(2011)Benhabib, Bisin, and Zhu]{benhabib2}
Jess Benhabib, Alberto Bisin, and Shenghao Zhu.
\newblock The distribution of wealth and fiscal policy in economies with
  finitely lived agents.
\newblock \emph{Econometrica}, 79\penalty0 (1):\penalty0 123--157, 2011.

\bibitem[Boghosian(2020)]{boghosiane}
Bruce~M Boghosian.
\newblock The inescapable casino.
\newblock \emph{The Best Writing on Mathematics 2020}, 18:\penalty0 15, 2020.

\bibitem[Bouchaud and M{\'e}zard(2000)]{bouchaud}
Jean-Philippe Bouchaud and Marc M{\'e}zard.
\newblock Wealth condensation in a simple model of economy.
\newblock \emph{Physica A: Statistical Mechanics and its Applications},
  282\penalty0 (3-4):\penalty0 536--545, 2000.

\bibitem[Brian~Arthur et~al.(1986)Brian~Arthur, Ermoliev, and
  Kaniovski]{Arthur2}
W~Brian~Arthur, Yu~M Ermoliev, and Yu~M Kaniovski.
\newblock Strong laws for a class of path-dependent stochastic processes with
  applications.
\newblock In \emph{Stochastic optimization}, pages 287--300. Springer, 1986.

\bibitem[Bundesamt({\natexlab{a}})]{destatis}
Statistisches Bundesamt.
\newblock {Lohn- und Einkommenssteuer 2018}, {\natexlab{a}}.
\newblock URL
  \url{https://www.destatis.de/DE/Themen/Staat/Steuern/Lohnsteuer-Einkommensteuer/Publikationen/Downloads-Lohn-und-Einkommenssteuern/lohn-einkommensteuer-2140710187004.pdf?__blob=publicationFile}.
\newblock Accessed on April 18, 2023.

\bibitem[Bundesamt({\natexlab{b}})]{destatis2}
Statistisches Bundesamt.
\newblock {Volkswirtschaftliche Gesamtrechnungen 2022}, {\natexlab{b}}.
\newblock URL
  \url{https://www.destatis.de/DE/Themen/Wirtschaft/Volkswirtschaftliche-Gesamtrechnungen-Inlandsprodukt/Publikationen/Downloads-Inlandsprodukt/inlandsprodukt-lange-reihen-pdf-2180150.pdf?__blob=publicationFile}.
\newblock Accessed on Juni 21, 2023.

\bibitem[Bundesfinanzministerium()]{laborShare}
Bundesfinanzministerium.
\newblock {BMF Monatsbericht Januar 2022}.
\newblock URL
  \url{https://www.bundesfinanzministerium.de/Monatsberichte/2022/01/Inhalte/Kapitel-6-Statistiken/6-4-04-einkommensverteilung.html}.
\newblock Accessed on Juni 28, 2023.

\bibitem[{Bundesministerium für Arbeit und Soziales}()]{CorrelationPaper}
{Bundesministerium für Arbeit und Soziales}.
\newblock {Analyse der Verteilung von Einkommen und Vermögen in Deutschland}.
\newblock URL
  \url{https://www.armuts-und-reichtumsbericht.de/SharedDocs/Downloads/Service/Studien/analyse-verteilung-einkommen-vermoegen.pdf?__blob=publicationFile&v=3}.
\newblock Accessed on Juni 24, 2023.

\bibitem[Cardoso et~al.(2020)Cardoso, Gon{\c{c}}alves, and Iglesias]{cardoso}
Ben-Hur~Francisco Cardoso, Sebasti{\'a}n Gon{\c{c}}alves, and Jos{\'e}~Roberto
  Iglesias.
\newblock Wealth distribution models with regulations: Dynamics and equilibria.
\newblock \emph{Physica A: Statistical Mechanics and its Applications}, 551,
  2020.

\bibitem[Chakrabarti et~al.(2013)Chakrabarti, Chakraborti, Chakravarty, and
  Chatterjee]{chakrabarti}
Bikas~K Chakrabarti, Anirban Chakraborti, Satya~R Chakravarty, and Arnab
  Chatterjee.
\newblock \emph{Econophysics of income and wealth distributions}.
\newblock Cambridge University Press, 2013.

\bibitem[Chatterjee et~al.(2004)Chatterjee, Chakrabarti, and Manna]{chatterjee}
Arnab Chatterjee, Bikas~K Chakrabarti, and Subhrangshu~Sekhar Manna.
\newblock Pareto law in a kinetic model of market with random saving
  propensity.
\newblock \emph{Physica A: Statistical Mechanics and its Applications},
  335\penalty0 (1-2):\penalty0 155--163, 2004.

\bibitem[Dr{\u{a}}gulescu and Yakovenko(2001)]{druagulescu}
Adrian Dr{\u{a}}gulescu and Victor~M Yakovenko.
\newblock Exponential and power-law probability distributions of wealth and
  income in the united kingdom and the united states.
\newblock \emph{Physica A: Statistical Mechanics and its Applications},
  299\penalty0 (1-2):\penalty0 213--221, 2001.

\bibitem[Ederer et~al.(2021)Ederer, Mayerhofer, and Rehm]{ederer}
Stefan Ederer, Maximilian Mayerhofer, and Miriam Rehm.
\newblock Rich and ever richer? differential returns across socioeconomic
  groups.
\newblock \emph{Journal of Post Keynesian Economics}, 44\penalty0 (2):\penalty0
  283--301, 2021.

\bibitem[Fagereng et~al.(2020)Fagereng, Guiso, Malacrino, and
  Pistaferri]{fagereng}
Andreas Fagereng, Luigi Guiso, Davide Malacrino, and Luigi Pistaferri.
\newblock Heterogeneity and persistence in returns to wealth.
\newblock \emph{Econometrica}, 88\penalty0 (1):\penalty0 115--170, 2020.

\bibitem[Forbes and Grosskinsky(2022)]{forbes}
Samuel Forbes and Stefan Grosskinsky.
\newblock {A study of UK household wealth through empirical analysis and a
  non-linear Kesten process}.
\newblock \emph{Plos one}, 17\penalty0 (8):\penalty0 e0272864, 2022.

\bibitem[für~politische Bildung()]{sparquote}
Bundeszentrale für~politische Bildung.
\newblock {Sparverhalten nach Einkommen}.
\newblock URL
  \url{https://www.bpb.de/system/files/dokument_pdf/08%20Sparverhalten.pdf}.
\newblock Accessed on July 18, 2023.

\bibitem[Gabaix(2009)]{gabaix}
Xavier Gabaix.
\newblock Power laws in economics and finance.
\newblock \emph{Annu. Rev. Econ.}, 1\penalty0 (1):\penalty0 255--294, 2009.

\bibitem[Gottfried and Grosskinsky(2023)]{wir}
Thomas Gottfried and Stefan Grosskinsky.
\newblock Asymptotics of generalized p\'olya urns with non-linear feedback.
\newblock \emph{arXiv preprint arXiv:2303.01210}, 2023.

\bibitem[Hill et~al.(1980)Hill, Lane, and Sudderth]{Hill}
Bruce~M Hill, David Lane, and William Sudderth.
\newblock A strong law for some generalized urn processes.
\newblock \emph{The Annals of Probability}, pages 214--226, 1980.

\bibitem[Hu and Zhang(2023)]{hu}
Zhishui Hu and Yiting Zhang.
\newblock Strong limit theorems for step-reinforced random walks.
\newblock \emph{arXiv preprint arXiv:2311.15263}, 2023.

\bibitem[Kohlrausch and Gon{\c{c}}alves(2023)]{kohlrausch}
Gustavo Kohlrausch and Sebasti{\'a}n Gon{\c{c}}alves.
\newblock Wealth distribution on a dynamic complex network.
\newblock \emph{arXiv preprint arXiv:2302.03677}, 2023.

\bibitem[Liu et~al.(2021)Liu, Lubbers, Klein, Tobochnik, Boghosian, and
  Gould]{liu}
Kang~KL Liu, N~Lubbers, W~Klein, J~Tobochnik, BM~Boghosian, and Harvey Gould.
\newblock Simulation of a generalized asset exchange model with economic growth
  and wealth distribution.
\newblock \emph{Physical Review E}, 104\penalty0 (1):\penalty0 014150, 2021.

\bibitem[Monaco et~al.(2024)Monaco, Ghio, and Perrotta]{monaco}
Andrea Monaco, Matteo Ghio, and Adamaria Perrotta.
\newblock Wealth dynamics in a multi-aggregate closed monetary system.
\newblock \emph{arXiv preprint arXiv:2401.09871}, 2024.

\bibitem[Nevel'son and Has'~minskii(1976)]{Hasminskii}
Mikhail~Borisovich Nevel'son and Rafail~Zalmanovich Has'~minskii.
\newblock \emph{Stochastic approximation and recursive estimation}, volume~47.
\newblock American Mathematical Soc., 1976.

\bibitem[Oliveira(2009)]{Oliveira}
Roberto~Imbuzeiro Oliveira.
\newblock The onset of dominance in balls-in-bins processes with feedback.
\newblock \emph{Random Structures \& Algorithms}, 34\penalty0 (4):\penalty0
  454--477, 2009.

\bibitem[Pareto(1896)]{pareto}
Vilfredo Pareto.
\newblock \emph{Cours d'économie politique}.
\newblock F. Rouge, 1896.

\bibitem[Pemantle(2007)]{Pemantle}
Robin Pemantle.
\newblock A survey of random processes with reinforcement.
\newblock \emph{Probability surveys}, 4:\penalty0 1--79, 2007.

\bibitem[Piketty(2017)]{Piketty}
Thomas Piketty.
\newblock \emph{{Capital in the 21st Century}}.
\newblock Harvard University Press, 2017.

\bibitem[Piketty(2022)]{Piketty2}
Thomas Piketty.
\newblock \emph{{A Brief History of Equality}}.
\newblock Harvard University Press, 2022.

\bibitem[Pikkety()]{wid}
Thomas Pikkety.
\newblock World inequality database.
\newblock URL \url{https://wid.world/data/}.
\newblock Accessed on April 18, 2023.

\bibitem[Quadrini and R{\i}os-Rull(1997)]{quadrini}
Vincenzo Quadrini and Jos{\'e}-V{\i}ctor R{\i}os-Rull.
\newblock {Dimensions of inequality: Facts on the US distribution of earnings,
  income and wealth}.
\newblock \emph{Federal Reserve Bank of Minneapolis Quarterly Review},
  21\penalty0 (2):\penalty0 3--21, 1997.

\bibitem[Simon(1955)]{simon}
Herbert~A Simon.
\newblock On a class of skew distribution functions.
\newblock \emph{Biometrika}, 42\penalty0 (3/4):\penalty0 425--440, 1955.

\bibitem[Vallejos et~al.(2018)Vallejos, Nutaro, and Perumalla]{Vallejos}
Hunter~A. Vallejos, James~J. Nutaro, and Kalyan~S. Perumalla.
\newblock An agent-based model of the observed distribution of wealth in the
  united states.
\newblock \emph{Journal of Economic Interaction and Coordination}, 13\penalty0
  (3):\penalty0 641--656, 2018.

\bibitem[Vermeulen(2018)]{vermeulen}
Philip Vermeulen.
\newblock How fat is the top tail of the wealth distribution?
\newblock \emph{Review of Income and Wealth}, 64\penalty0 (2):\penalty0
  357--387, 2018.

\bibitem[Wold and Whittle(1957)]{wold}
Herman~OA Wold and Peter Whittle.
\newblock A model explaining the pareto distribution of wealth.
\newblock \emph{Econometrica}, 25\penalty0 (4):\penalty0 591--595, 1957.

\bibitem[Yakovenko and Rosser~Jr(2009)]{yakovenko}
Victor~M Yakovenko and J~Barkley Rosser~Jr.
\newblock Colloquium: Statistical mechanics of money, wealth, and income.
\newblock \emph{Reviews of modern physics}, 81\penalty0 (4):\penalty0 1703,
  2009.

\bibitem[Zhu(2009)]{Zhu}
Tong Zhu.
\newblock {Nonlinear P{\'o}lya urn models and self-organizing processes}.
\newblock \emph{Unpublished dissertation, University of Pennsylvania,
  Philadelphia}, 2009.

\end{thebibliography}
\end{document}